\definecolor{darkgreen}{rgb}{0,0.45,0}
\DeclareMathAlphabet{\mathbf}{OT1}{cmr}{b}{n}
\DeclareMathOperator{\colim}{colim}
\newcommand{\cat}[1]{\mathbf{#1}}
\newcommand{\op}{\mathrm{op}}
\renewcommand{\phi}{\varphi}
\newcommand{\E}{{\mathcal E}}
\newcommand{\M}{{\mathcal M}}
\renewcommand{\P}{{\mathcal P}}
\newcommand{\xtor}[1]{\cdl[@1]{{} \ar[r]|-{\object@{|}}^{#1} & {}}}
\newcommand{\To}{\ensuremath{\Rightarrow}}
\def\hookleftarrowfill@{\arrowfill@\leftarrow\relbar{\relbar\joinrel\rhook}}
\def\twoheadleftarrowfill@{\arrowfill@\twoheadleftarrow\relbar\relbar}
\def\leftbararrowfill@{\arrowdoublefill@{\leftarrow\mkern-5mu}\relbar\mapstochar\relbar\relbar}
\def\Leftbararrowfill@{\arrowdoublefill@{\Leftarrow\mkern-2mu}\Relbar\Mapstochar\Relbar\Relbar}
\def\leftringarrowfill@{\arrowdoublefill@{\leftarrow\mkern-3mu}\relbar{\mkern-3mu\circ\mkern-2mu}\relbar\relbar}
\def\lefttriarrowfill@{\arrowfill@{\mathrel\triangleleft\mkern0.5mu\joinrel\relbar}\relbar\relbar}
\def\Lefttriarrowfill@{\arrowfill@{\mathrel\triangleleft\mkern1mu\joinrel\Relbar}\Relbar\Relbar}
\def\hookrightarrowfill@{\arrowfill@{\lhook\joinrel\relbar}\relbar\rightarrow}
\def\twoheadrightarrowfill@{\arrowfill@\relbar\relbar\twoheadrightarrow}
\def\rightbararrowfill@{\arrowdoublefill@{\relbar\mkern-0.5mu}\relbar\mapstochar\relbar\rightarrow}
\def\Rightbararrowfill@{\arrowdoublefill@{\Relbar\mkern-2mu}\Relbar\Mapstochar\Relbar\Rightarrow}
\def\rightringarrowfill@{\arrowdoublefill@\relbar\relbar{\mkern-2mu\circ\mkern-3mu}\relbar{\mkern-3mu\rightarrow}}
\def\righttriarrowfill@{\arrowfill@\relbar\relbar{\relbar\joinrel\mkern0.5mu\mathrel\triangleright}}
\def\Righttriarrowfill@{\arrowfill@\Relbar\Relbar{\Relbar\joinrel\mkern1mu\mathrel\triangleright}}
\def\leftrightarrowfill@{\arrowfill@\leftarrow\relbar\rightarrow}
\def\mapstofill@{\arrowfill@{\mapstochar\relbar}\relbar\rightarrow}
\renewcommand*\xleftarrow[2][]{\ext@arrow 20{20}0\leftarrowfill@{#1}{#2}}
\providecommand*\xLeftarrow[2][]{\ext@arrow 60{22}0{\Leftarrowfill@}{#1}{#2}}
\providecommand*\xhookleftarrow[2][]{\ext@arrow 10{20}0\hookleftarrowfill@{#1}{#2}}
\providecommand*\xtwoheadleftarrow[2][]{\ext@arrow 60{20}0\twoheadleftarrowfill@{#1}{#2}}
\providecommand*\xleftbararrow[2][]{\ext@arrow 10{22}0\leftbararrowfill@{#1}{#2}}
\providecommand*\xLeftbararrow[2][]{\ext@arrow 50{24}0\Leftbararrowfill@{#1}{#2}}
\providecommand*\xleftringarrow[2][]{\ext@arrow 10{26}0\leftringarrowfill@{#1}{#2}}
\providecommand*\xlefttriarrow[2][]{\ext@arrow 80{24}0\lefttriarrowfill@{#1}{#2}}
\providecommand*\xLefttriarrow[2][]{\ext@arrow 80{24}0\Lefttriarrowfill@{#1}{#2}}
\renewcommand*\xrightarrow[2][]{\ext@arrow 01{20}0\rightarrowfill@{#1}{#2}}
\providecommand*\xRightarrow[2][]{\ext@arrow 04{22}0{\Rightarrowfill@}{#1}{#2}}
\providecommand*\xhookrightarrow[2][]{\ext@arrow 00{20}0\hookrightarrowfill@{#1}{#2}}
\providecommand*\xtwoheadrightarrow[2][]{\ext@arrow 03{20}0\twoheadrightarrowfill@{#1}{#2}}
\providecommand*\xrightbararrow[2][]{\ext@arrow 01{22}0\rightbararrowfill@{#1}{#2}}
\providecommand*\xRightbararrow[2][]{\ext@arrow 04{24}0\Rightbararrowfill@{#1}{#2}}
\providecommand*\xrightringarrow[2][]{\ext@arrow 01{26}0\rightringarrowfill@{#1}{#2}}
\providecommand*\xrighttriarrow[2][]{\ext@arrow 07{24}0\righttriarrowfill@{#1}{#2}}
\providecommand*\xRighttriarrow[2][]{\ext@arrow 07{24}0\Righttriarrowfill@{#1}{#2}}
\providecommand*\xmapsto[2][]{\ext@arrow 01{20}0\mapstofill@{#1}{#2}}
\providecommand*\xleftrightarrow[2][]{\ext@arrow 10{22}0\leftrightarrowfill@{#1}{#2}}
\providecommand*\xLeftrightarrow[2][]{\ext@arrow 10{27}0{\Leftrightarrowfill@}{#1}{#2}}
\numberwithin{equation}{section}
\theoremstyle{plain}
\newtheorem{Thm}{Theorem}
\newtheorem{Prop}[Thm]{Proposition}
\newtheorem{Cor}[Thm]{Corollary}
\newtheorem{Lemma}[Thm]{Lemma}
\theoremstyle{definition}
\newtheorem{Defn}[Thm]{Definition}
\newtheorem{Ex}[Thm]{Example}
\newtheorem{Exs}[Thm]{Examples}
\newtheorem{Rk}[Thm]{Remark}
\newcommand{\Cat}{\cat{Cat}}
\newcommand{\Set}{\cat{Set}}
\newcommand{\CAT}{\cat{CAT}}
\newcommand{\Nat}{\cat{Nat}}
\newcommand{\Sub}{\cat{Sub}}
\newcommand{\msf}[1]{\mathsf{#1}}						
\newcommand{\mbb}[1]{\mathbb{#1}}						
\newcommand{\wt}[1]{\widetilde{#1}}						
\newcommand{\ov}[1]{\overline{#1}}							
\newcommand{\wh}[1]{\widehat{#1}}						
\newcommand{\dom}{\msf{dom}}
\newcommand{\yon}{\mathbf{y}}								
\newcommand{\PSh}{\msf{PSh}}
\newcommand{\Par}{\msf{Par}}
\newcommand{\Total}{\msf{Total}}
\begin{document}
\leftmargini=2em
\title{Cocompletion of restriction categories}
\author{Richard Garner}
\author[Daniel Lin]{Daniel Lin\textsuperscript{$\ast$}}
\address{Department of Mathematics, Macquarie University, North Ryde, NSW 2109, Australia}
\email{richard.garner@mq.edu.au}
\email{daniel.lin@mq.edu.au}

\subjclass[2000]{Primary: 18B99}
\keywords{restriction categories, cocompletion}
\date{\today}

\thanks{\textsuperscript{$\ast$}Corresponding author. 
	The support of a Macquarie University Research Scholarship is gratefully acknowledged.}

\begin{abstract}
	Restriction categories were introduced as a way of generalising the notion of partial map categories. In this paper, we 
	define cocomplete restriction category, and give the free cocompletion of a small restriction category as a suitably
	defined category of restriction presheaves. We also consider the case where our restriction category is locally small.
\end{abstract}
\maketitle

\section{Introduction}
The notion of a partial function is ubiquitous in many areas of mathematics, most notably in computability theory, 
complexity theory, algebraic geometry and algebraic topology. However, such notions of partiality need not be solely 
restricted to sets and partial functions between them, but may also arise in the context of continuous functions
on the open subsets of topological spaces \cite[p.~97]{MR968102}. An early attempt at describing an abstract notion 
of partiality came from Carboni \cite{MR913967}, who considered bicategories with a tensor product and a unique 
cocommutative comonoid structure. However, the first real attempt at axiomatising the general theory came from Di Paola and 
Heller \cite{MR902979}, who introduced the notion of a \emph{dominical category}. Around the same time, Robinson and 
Rosolini \cite{MR968102} gave their own interpretation of this notion of partiality through what they called 
$p$-\emph{categories}, and observed that Di Paolo and Heller's dominical categories were in fact instances of 
$p$-categories.

The common theme between dominical categories and $p$-categories is their reliance on classes of monomorphisms for 
partiality. However, it was shown by Grandis \cite{MR1108477} that it was possible to capture the partiality of maps in the 
form of idempotents on their domains, via the notion of $e$-\emph{cohesive categories}. This same idea was later 
reformulated and studied extensively by Cockett and Lack in their series of three papers on \emph{restriction categories} 
\cite{MR1871071, MR1963657, MR2347616}. Informally, in a restriction category $\cat{X}$, the restriction of a map 
$f\colon A\to B\in\cat{X}$ is an idempotent $\bar{f}\colon A\to A$ which measures the degree of the partiality of $f$. 
In particular, in the category of sets and partial functions, the restriction of a map $f\colon A\to B$ is a partial identity
map on $A$ which has the same domain of definition as $f$.

Since restriction categories are categories with extra structure, it would not be too far-fetched to think that one could give 
a notion of colimits in this restriction setting. As a matter of fact, Cockett and Lack give an explicit description of 
\emph{restriction coproducts} in a restriction category \cite[Lemma 2.1]{MR2347616}. As a necessary first step towards
understanding restriction colimits in general, we consider the notion of a \emph{cocomplete restriction category}, and 
of free restriction cocompletion; indeed this is what we will do in this paper. Future work will include extending this 
notion of restriction cocompletion to join restriction categories, and showing that the manifold completion of a join
restriction category \cite{MR1108477} is a full subcategory of this join restriction cocompletion, whatever that might be. 
Another possibility is to extend this to categories with a restriction tangent structure, and showing that its free
cocompletion also has a restriction tangent structure \cite{MR3192082}.

The starting point for our discussion will be a revision of background material from Cockett and Lack \cite{MR1871071}, 
in section \hyperref[sec2]{2}. In section \hyperref[sec3]{3}, we define cocomplete $\M$-category and cocontinuous 
$\M$-functors. Then using the fact $\M$-categories are \emph{the same} as split restriction categories, we give a 
definition of cocomplete restriction category and cocontinuous restriction functors. We also show that the Cockett-Lack 
embedding exhibits the split restriction category $\Par(\PSh_{\M}(\M\Total(\msf{K}_r(\cat{X}))))$ as the free cocompletion 
of any small restriction category $\cat{X}$.

In section \hyperref[sec4]{4}, we introduce the notion of restriction presheaf on a restriction category $\cat{X}$,
and give an explicit description of the split restriction category of restriction presheaves $\PSh_r(\cat{X})$. 
Finally, we show that this restriction presheaf category $\PSh_r(\cat{X})$ is in fact equivalent to 
$\Par(\PSh_{\M}(\M\Total(\msf{K}_r(\cat{X}))))$, and this in turn gives us an alternate formulation of restriction free 
cocompletion.

Finally in section \hyperref[sec5]{5}, we consider the case where our $\M$-category $\cat{C}$ may not be small, but 
locally small, and give a definition of what it means for an $\M$-category to be locally small. We see that for any locally 
small $\M$-category $\cat{C}$, the $\M$-category of small presheaves $\P_{\M}(\cat{C})$ is not only locally small and 
cocomplete, but is also the free cocompletion of $\cat{C}$. Then as before, it turns out that for any locally small restriction 
category $\cat{X}$, the Cockett-Lack embedding exhibits the restriction category 
$\Par(\P_{\M}(\M\Total(\msf{K}_r(\cat{X}))))$ as its free cocompletion. Also, just as small presheaves are defined to 
be a small colimit of representables, we define small restriction presheaves analogously.

\section{Restriction category preliminaries}\label{sec2}
	\subsection{Restriction categories}
	In this section, we recall the definition of a restriction category and basic lemmas from \cite{MR1871071}. 
	We recall there is a $2$-category of restriction categories called $\cat{rCat}$, and that $\cat{rCat}$ has an important
	sub-$2$-category $\cat{rCat}_s$ of split restriction categories. The reason for its importance is due to 
	\cite[Theorem 3.4]{MR1871071}, which says there is an equivalence between $\cat{rCat}_s$ and the $2$-category $\M\Cat$ of
	$\M$-categories (or categories with a stable system of monics). A consequence of this theorem is that it allows us to work
	with $\M$-categories, which are not much different to ordinary categories, and transfer any results obtained across to restriction 
	categories. We will be referring frequently to this equivalence between $\cat{rCat}_s$ and $\M\Cat$ in later sections.
	
	\begin{Defn}
		A \emph{restriction category} is a category $\cat{X}$ together with assignations
			$$ \cat{X}(A,B) \to \cat{X}(A,A), \quad f \mapsto \bar{f} $$
		where $\bar{f}$ satisfies the following conditions:
			\begin{enumerate}[leftmargin=1.5cm,label=(R\arabic*)]
				\item $f \circ \bar{f} = f$;
				\item $\bar{g} \circ \bar{f} = \bar{f} \circ \bar{g}$ for $f \colon A \to B$, $g\colon A \to C$;
				\item $\ov{g \circ \bar{f}} = \bar{g} \circ \bar{f}$ for $f\colon A \to B$, $g\colon A \to C$;
				\item $\bar{h} \circ f = f \circ \ov{h \circ f}$ for $f\colon A \to B$, $h\colon B \to C$.
			\end{enumerate}
		The assignations $f\mapsto \bar{f}$ are called the \emph{restriction structure} on $\cat{X}$, and we call
		$\bar{f}$ the \emph{restriction} of $f$.
	\end{Defn}
	
	\begin{Exs}
		\begin{enumerate}[label=(\arabic*)]
			\item The category of sets and partial functions $\Set_p$ is a restriction category, where the restriction on each partial function
						$f\colon A \to B$ is given by
							$$ \bar{f}(a) = \begin{cases} a & \text{if $f$ is defined at $a\in A$;} \\
						\text{undefined} & \text{otherwise.} \end{cases} $$
			\item Consider the set of natural numbers $\mbb{N}$ as a monoid whose composition is given by $n\circ m=\max(m,n)$. Then 
						$\mbb{N}$ maybe given two restriction structures; the first by $\bar{n}=n$, and the second by
							$$ \bar{n} = \begin{cases} n & \text{$n=0$ or $n$ odd;} \\ n-1 & \text{otherwise.} \end{cases} $$
		\end{enumerate}
	\end{Exs}
	
	The restriction $\bar{f}$ of any map $f$ in a restriction category satisfies the following basic properties (see 
	\cite[pp.~227,230]{MR1871071} for details).
	\begin{Lemma}
		Let $\cat{X}$ be a restriction category, and let $f\colon A \to B$ and $g\colon B\to C$ be morphisms in $\cat{X}$. Then
		\begin{enumerate}[leftmargin=1cm,label=(\arabic*)]
			\item $\bar{f}$ is idempotent;
			\item $\bar{f} \circ \ov{gf} = \ov{gf}$;
			\item $\ov{\bar{g} f} = \ov{gf}$;
			\item $\bar{f} = \bar{\bar{f}}$;
			\item if $f$ is a monomorphism, then $\bar{f}=1$;
			\item $\cat{X}(A,B)$ has a partial order given by $f \le f'$ if and only if $f=f' \circ \bar{f}$.
		\end{enumerate}
	\end{Lemma}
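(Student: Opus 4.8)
The plan is to verify each of the six properties by direct manipulation of the axioms (R1)--(R4), in the order given, since each later item tends to reuse the earlier ones. For item (1), idempotence of $\bar f$, I would apply (R3) with $g = 1_A$ (or observe that $\ov{\bar f} = \ov{1_A \circ \bar f} = \bar{1_A}\circ\bar f$ and separately compute $\bar{1_A}$), but the cleanest route is: by (R1) we have $f\circ\bar f = f$, so $\ov{f\circ\bar f} = \bar f$; on the other hand (R3) gives $\ov{f\circ\bar f} = \bar f\circ\bar f$, whence $\bar f\circ\bar f = \bar f$. For item (4), $\bar f = \bar{\bar f}$, I would apply the same trick: $\bar{\bar f} = \ov{\bar f\circ\bar f}$ does not immediately help, so instead use (R3) with $g = \bar f$ and $f$ replaced by $f$, giving $\ov{\bar f\circ\bar f} = \bar f\circ\bar f = \bar f$ by (1), and separately note $\bar f\circ\bar f = \bar f$ means $\bar{\bar f} = \ov{\bar f\circ 1} $; alternatively take $f = g$ in (R3) to get $\ov{\bar f f} = \bar f \bar f = \bar f$, then I will need (2) or (3) below to finish — so I would actually prove (4) after (3).

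For item (2), $\bar f\circ\ov{gf} = \ov{gf}$: apply (R4) with $h$ replaced by an appropriate map, or more directly, set $h = \bar{gf}$ viewed as going $B\to B$? That is awkward since $\ov{gf}$ lives on $A$. Instead I would use (R3): $\ov{gf} = \ov{g\circ f}$, and since $f\circ\bar f = f$ we get $\ov{gf} = \ov{g\circ f\circ\bar f} = \ov{\ov{gf}\circ\bar f}$? No — cleaner: by (R3), $\ov{(gf)\circ\bar f} = \ov{gf}\circ\bar f$, but also $(gf)\circ\bar f = g\circ(f\bar f) = gf$ by (R1), so $\ov{gf} = \ov{gf}\circ\bar f$; then commuting via (R2) gives $\bar f\circ\ov{gf} = \ov{gf}$. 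For item (3), $\ov{\bar g f} = \ov{gf}$: use (R4) with the roles arranged so that $\bar g\circ f = f\circ\ov{\bar g\circ f}$? Actually (R4) says $\bar h\circ f = f\circ\ov{hf}$; taking $h = g$ gives $\bar g\circ f = f\circ\ov{gf}$, so $\ov{\bar g f} = \ov{f\circ\ov{gf}} = \bar f\circ\ov{gf} = \ov{gf}$ using (R3) and then (2). Item (4) then follows by taking $g = 1_B$ in (3): $\ov{\bar{1_B}\circ f}$... hmm, that gives $\bar f = \bar f$ trivially; instead take $g = f$ in (3) to get $\ov{\bar f\circ f}$, which is not quite $\bar{\bar f}$ either. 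The right move: in (R3) take $g = \bar f$ and keep $f$, yielding $\ov{\bar f\circ\bar f} = \bar f\circ\bar f$; the left side is $\bar{\bar f}$ by (1) since $\bar f\circ\bar f = \bar f$, and the right side is $\bar f$ by (1), done.

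For item (5), if $f$ is monic then $f\circ\bar f = f = f\circ 1_A$ by (R1), so $\bar f = 1_A$ by cancellation. For item (6), I would check that $\le$ is reflexive using (R1) ($f = f\circ\bar f$), antisymmetric by noting $f\le f'$ and $f'\le f$ give $f = f'\bar f$ and $f' = f\bar{f'}$, hence $\bar f = \bar{f'}$ (apply the bar to both and use (3), (1)), so $f = f'\bar f = f'\bar{f'} = f'$; and transitive using (R2), (R3), (R4) to rearrange restrictions — specifically from $f = f'\bar f$ and $f' = f''\bar{f'}$ we get $f = f''\bar{f'}\bar f$ and must show $\bar{f'}\bar f = \ov{\,\cdot\,}$ of something, landing on $f = f''\bar f$. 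I expect the main obstacle to be exactly this transitivity bookkeeping in (6), where several applications of (R2)--(R4) and the already-proven facts (2)--(3) must be chained in the right order; everything else is a one- or two-line consequence of a single axiom.
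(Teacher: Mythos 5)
The paper does not actually prove this lemma in the text---it defers to Cockett and Lack \cite[pp.~227,230]{MR1871071}---so there is no in-paper argument to compare against; judging your proof on its own terms, items (1), (2), (3) and (5) are correct and use the axioms exactly as one should. The one genuine error is the final justification of item (4). You assert that (R3) with $g=\bar f$ yields $\ov{\bar f\circ\bar f}=\bar f\circ\bar f$, but (R3) actually yields $\ov{\bar f\circ\bar f}=\ov{\bar f}\circ\bar f=\bar{\bar f}\circ\bar f$; rewriting $\bar{\bar f}$ as $\bar f$ on the right-hand side is precisely the identity being proven, so the step is circular as written. Combined with (1) it only gives $\bar{\bar f}=\bar{\bar f}\circ\bar f$, which is not yet the conclusion (one can finish, since (R2) and (R1) applied to $\bar f$ give $\bar{\bar f}\circ\bar f=\bar f\circ\bar{\bar f}=\bar f$, but you do not say this). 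The clean repair is the route you mention parenthetically at the very start and then abandon: $\bar{\bar f}=\ov{1_A\circ\bar f}=\ov{1_A}\circ\bar f=\bar f$ by (R3), using $\ov{1_A}=1_A$, which follows from (R1).

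On item (6): reflexivity and antisymmetry are fine, though the identity you need there---$\bar f=\bar{f'}\circ\bar f$ whenever $f=f'\circ\bar f$---comes from applying the restriction and using (R3) and (R2) directly, not from item (3) of the lemma (which concerns $\ov{\bar g\circ f}$, a postcomposed idempotent). The transitivity that you leave as ``bookkeeping'' does go through by the same identity: from $f=f'\circ\bar f$ one gets $\bar f=\ov{f'\circ\bar f}=\bar{f'}\circ\bar f$, hence $f''\circ\bar f=f''\circ\bar{f'}\circ\bar f=f'\circ\bar f=f$, so $f\le f''$. In summary, the overall strategy is the standard one and everything is correct or straightforwardly completable except the stated argument for (4), which must be replaced as above.
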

	A map $f \in \cat{X}$ is called a \emph{restriction idempotent} if $f = \bar{f}$, and is \emph{total} if $\bar{f} = 1$. If $f\colon A\to B$
	and $g\colon B\to C$ are total maps in a restriction category, then $gf$ is also total since
		$$ \ov{gf}= \ov{\bar{g}f} = \ov{f} = 1. $$
	Therefore, as identities are total, the objects and total maps of any restriction category $\cat{X}$ form a subcategory 
	$\msf{Total}(\cat{X})$.
	
	\begin{Defn}
		A functor $F\colon\cat{X}\to\cat{Y}$ between restriction categories is called a \emph{restriction functor} if 
		$F(\bar{f})=\ov{F(f)}$ for all maps $f\in\cat{X}$, and a natural transformation $\alpha\colon F\To G$ is a \emph{restriction
		transformation} if its components are total. We denote by $\cat{rCat}$ the $2$-category of restriction categories (objects), 
		restriction functors ($1$-cells) and restriction transformations ($2$-cells).
	\end{Defn}

	\subsection{Split restriction categories}
	There is an important full sub-$2$-category $\cat{rCat}_s$ of $\cat{rCat}$, the objects of which are restriction categories whose 
	restriction idempotents split. Recall that a restriction idempotent $\bar{f}$ \emph{splits} if there exist maps $m$ and $r$ such that 
	$mr = \bar{f}$ and $rm=1$. We call such maps $m$, \emph{restriction monics}. 
		
	The inclusion $\cat{rCat}_s \hookrightarrow \cat{rCat}$ has a left biadjoint $\msf{K}_r$, which on objects takes restriction 
	categories $\cat{X}$ to split restriction categories $\msf{K}_r(\cat{X})$ \cite[p.~242]{MR1871071} with the following data:
		\begin{description}[labelindent=0.5cm,leftmargin=0.5cm]
			\item [Objects] Pairs $(A,e)$, where $A$ is an object of $\cat{X}$ and $e\colon A\to A$ is a restriction idempotent on $A$;
			\item [Morphisms] Morphisms $f\colon (A,e) \to (A',e')$ are morphisms $f\colon A\to A'$ in $\cat{X}$ satisfying the 
						condition $e'fe=f$;
			\item [Restriction] Restriction on $f$ is given by $\bar{f}$. 
		\end{description}
	The unit at $\cat{X}$ of this biadjunction, $J\colon \cat{X} \to \msf{K}_r(\cat{X})$, takes an object $A$ to $(A,1_A)$ and a morphism 
	$f\colon A\to A'$ to $f \colon(A,1_A) \to (A',1_{A'})$ in $\msf{K}_r(\cat{X})$. As alluded to earlier, this $2$-category of split restriction 
	categories $\cat{rCat}_s$ is equivalent to a $2$-category called $\M\Cat$, the objects of which form the basis for our discussion 
	in the next section.
	
	\subsection{\texorpdfstring{$\M$}{M}-categories and partial map categories}
	A \emph{stable system of monics} $\M_{\cat{C}}$ in a category $\cat{C}$ is a collection of monics in $\cat{C}$ which includes all 
	isomorphisms, is closed under composition, and the pullback of any $m\in\M_{\cat{C}}$ along arbitrary maps in $\cat{C}$ exists
	and is in $\M_{\cat{C}}$. An $\M$-category is then a category $\cat{C}$ together with a stable system of monics 
	$\M_{\cat{C}} \subseteq \cat{C}$, which we write as a pair $(\cat{C},\M_{\cat{C}})$ \cite[p.~245]{MR1871071}. (Where the 
	meaning is clear, we shall dispense with the notation $(\cat{C},\M_{\cat{C}})$ and simply write $\cat{C}$).
	
	If $\cat{C}$ and $\cat{D}$ are $\M$-categories, a functor $F$ between them is called an \emph{$\M$-functor} if 
	$m\in\M_{\cat{C}}$ implies $Fm\in\M_{\cat{D}}$, and $F$ preserves pullbacks of monics in $\M_{\cat{C}}$. Further, if 
	$F,G \colon \cat{C} \to \cat{D}$ are $\M$-functors, a natural transformation between them is called \emph{$\M$-cartesian} if 
	the naturality square is a pullback for all $m \in \M_{\cat{C}}$ \cite[p.~247]{MR1871071}. We denote by $\M\Cat$ the 
	$2$-category of $\M$-categories (objects), $\M$-functors ($1$-cells) and $\M$-cartesian natural transformations ($2$-cells).
	
	Now associated with any $\M$-category $\cat{C}$ is the split restriction category $\Par(\cat{C})$, called the \emph{category of partial
	maps in $\cat{C}$}. It has the same objects as $\cat{C}$, and morphisms from $X \to Y$ in $\Par(\cat{C})$ are spans 
		$$ (m,f) = X \xleftarrow{m\,\in\,\M_{\cat{C}}} Z \xrightarrow{f} Y $$ 
	identified up to some equivalence class. More precisely, $(m,f) \sim (n,g)$ if and only if there exists an 
	isomorphism $\phi$ such that $m\phi = n$ and $f\phi=g$. Composition in this category is by pullback, the identity is given by $(1,1)$ 
	and the restriction of $(m,f)$ is $(m,m)$ \cite[pp.~246,247]{MR1871071}.
	
	There is also a $2$-functor $\Par\colon \M\Cat \to \cat{rCat}_s$ which on objects, takes $\M$-categories $\cat{C}$ to 
	split restriction categories $\Par(\cat{C})$. If $F \colon \cat{C} \to \cat{D}$ is an $\M$-functor, then $\Par(F)$ takes objects 
	$A \in \Par(\cat{C})$ to $FA$ and morphisms $(m,f)$ to $(Fm,Ff)$. Also, if $\alpha \colon F \Rightarrow G$ is $\M$-cartesian, 
	then $\Par(\alpha)$ is defined componentwise by $\Par(\alpha)_A = (1_{FA},\alpha_A)$.
	
	\begin{Thm}
		The $2$-functor $\Par \colon\M\Cat \to \cat{rCat}_s$ is an equivalence of $2$-categories.
	\end{Thm}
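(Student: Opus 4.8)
The plan is to construct a pseudo-inverse $2$-functor $\msf{K}_r(\thg)\colon\cat{rCat}_s\to\M\Cat$ and exhibit pseudonatural equivalences in both directions. Given a split restriction category $\cat{X}$, I would take its underlying $\M$-category to be $(\Total(\cat{X}),\M_{\cat{X}})$, where $\Total(\cat{X})$ is the subcategory of total maps and $\M_{\cat{X}}$ is the class of restriction monics (those $m$ that split some restriction idempotent, i.e.\ arise as a section $m$ with $mr=\bar{f}$, $rm=1$). First I would check that $\M_{\cat{X}}$ really is a stable system of monics in $\Total(\cat{X})$: restriction monics are total monos, include all isomorphisms, are closed under composition, and — using splitness of restriction idempotents and the four restriction axioms — admit pullbacks along arbitrary total maps which again lie in $\M_{\cat{X}}$. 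This pullback-of-restriction-monics computation, carried out via the restriction calculus (in particular axiom (R4) and Lemma parts (2),(3)), is where the bulk of the verification lives. One then checks a restriction functor between split restriction categories restricts to an $\M$-functor on total maps, and a restriction transformation (total components) becomes $\M$-cartesian, giving the $2$-functor on $1$- and $2$-cells.

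Next I would establish the two composite comparisons. For $\cat{C}$ an $\M$-category, $\Par(\cat{C})$ is split restriction, its total maps are exactly the spans $(1,f)$ (identifiable with $\cat{C}$ itself), and its restriction monics are exactly the spans $(m,1)$ with $m\in\M_{\cat{C}}$; hence $\msf{K}_r(\Par(\cat{C}))\cong\cat{C}$ as $\M$-categories, pseudonaturally in $\cat{C}$. Conversely, for $\cat{X}$ split restriction, I would define a restriction functor $\cat{X}\to\Par(\Total(\cat{X}),\M_{\cat{X}})$ sending $f\colon A\to B$ to the span $A\xleftarrow{m} Z\xrightarrow{g} B$ obtained by splitting $\bar{f}$ as $mr=\bar{f}$, $rm=1$ and setting $g=fm$ (which is total, since $\bar{g}=\ov{fm}=\ov{\bar{f}m}=\ov{mrm}=\ov{m}=1$, using that $m$ is a mono so $\bar m=1$). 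Checking this is well-defined up to the span equivalence (independence of the chosen splitting), functorial, and compatible with restrictions is routine restriction algebra; that it is an isomorphism of categories uses that in $\cat{X}$ every restriction idempotent splits, so every span in the image can be reconstructed. Pseudonaturality in $\cat{X}$ then follows.

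Finally I would assemble these into the statement that $\Par$ and $\msf{K}_r(\thg)$ are pseudo-inverse, hence $\Par$ is an equivalence of $2$-categories: the above give equivalences $\Par\circ\msf{K}_r(\thg)\simeq\id_{\cat{rCat}_s}$ and $\msf{K}_r(\thg)\circ\Par\simeq\id_{\M\Cat}$, pseudonatural in their arguments, and one checks the triangle/coherence conditions (which are themselves essentially forced, as all the comparison cells are identities or canonical isomorphisms). I expect the main obstacle to be the pullback-stability verification in the first paragraph: showing that the pullback in $\Total(\cat{X})$ of a restriction monic along a total map exists and is again a restriction monic requires explicitly producing the splitting of the appropriate restriction idempotent and checking the universal property, and this is the one place where one must genuinely use all of the restriction axioms rather than formal nonsense. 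Everything else is bookkeeping with spans and the restriction calculus. (Since this is a recollection from \cite{MR1871071}, one could alternatively simply cite \cite[Theorem 3.4]{MR1871071} together with the known equivalence $\cat{rCat}_s\simeq\M\Cat$ there, and the above is the sketch of why that holds.)
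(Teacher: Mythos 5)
Your proposal is correct and follows essentially the same route as the paper: the pseudo-inverse you construct is exactly the paper's $\M\Total$ (sending a split restriction category $\cat{X}$ to $(\Total(\cat{X}),\M_{\Total(\cat{X})})$ with $\M_{\Total(\cat{X})}$ the restriction monics), and your unit $f\mapsto(m,fm)$ and counit $(1,f)\mapsto f$ coincide with the paper's $\Phi_{\cat{X}}$ and $\Psi_{\cat{C}}$. The only caveat is notational: you should not call this functor $\msf{K}_r$, since the paper reserves that name for the idempotent-splitting left biadjoint to the inclusion $\cat{rCat}_s\hookrightarrow\cat{rCat}$.
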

	\begin{proof}
		We give a quick sketch of the proof. For full details, see \cite[Theorem 3.4]{MR1871071}. We know that $\Par$ is a $2$-functor 
		from $\M\cat{Cat}$ to $\cat{rCat}_s$. Likewise, there is a $2$-functor $\M\Total \colon \cat{rCat}_s \to \M\cat{Cat}$, taking split 
		restriction categories $\cat{X}$ to $\M$-categories $(\Total(\cat{X}),\M_{\Total(\cat{X})})$, where $\M_{\Total(\cat{X})}$ consists of 
		the restriction monics in $\cat{X}$. (Recall that $\M\Total(\cat{X})$ really is an $\M$-category \cite[Proposition 3.3]{MR1871071}). 
		
		The pair $\Par$ and $\M\msf{Total}$ are then part of a $2$-equivalence, with the unit at $\cat{X}$,
		$\Phi_{\cat{X}} \colon \cat{X} \to \Par(\M\Total(\cat{X}))$, given by $\Phi_{\cat{X}}(A) = A$ on objects and by
		$\Phi_{\cat{X}}(f) = (m,fm)$ on arrows (where $\bar{f}=mr$ and $rm=1$). On the other hand, the counit at $\cat{C}$ is defined by
		$\Psi_{\cat{C}}(A) = A$ on objects and $\Psi_{\cat{C}}(1,f) = f$ on morphisms.
	\end{proof}
	
\section{Cocompletion of restriction categories}\label{sec3}
For any small category $\cat{C}$, we may characterise the category of presheaves $\PSh(\cat{C})$ as the 
\emph{free cocompletion} of $\cat{C}$. That is, for any small-cocomplete category $\E$, the following is an equivalence of categories:
	$$ (-)\circ\yon\colon \cat{Cocomp}(\PSh(\cat{C}),\E) \to \Cat(\cat{C},\E) $$
where $\yon$ is the Yoneda embedding, $\Cat$ is the $2$-category of small categories and $\cat{Cocomp}$ is the $2$-category of 
small-cocomplete categories and cocontinuous functors. (For the rest of this paper, we shall take \emph{cocomplete} to mean 
\emph{small-cocomplete}, and \emph{colimits} to mean \emph{small colimits} unless otherwise indicated). However, it is not 
immediately obvious that there is an analogous notion of cocompletion for any small restriction category $\cat{X}$. 
Nonetheless, a clue is given to us in light of the $2$-equivalence between $\M\Cat$ and $\cat{rCat}_s$. That is, it might be helpful 
to first define a notion of cocomplete $\M$-category, and study the free cocompletion of small $\M$-categories.

In this section, we recall the $\M$-category of presheaves $\PSh_{\M}(\cat{C})$ for any small $\M$-category $\cat{C}$ and give 
a definition of cocomplete $\M$-category and cocontinuous $\M$-functor. (As it turns out, this $\M$-category of presheaves,
$\PSh_{\M}(\cat{C})$ will be the free cocompletion of any small $\M$-category $\cat{C}$). Then using the $2$-equivalence between 
$\M\Cat$ and $\cat{rCat}_s$, we define cocomplete restriction categories and cocontinuous restriction functors. This in turn provides 
a candidate for free restriction cocompletion, namely the split restriction category $\Par(\PSh_{\M}(\M\Total(\msf{K}_r(\cat{X}))))$ 
described by Cockett and Lack \cite{MR1871071}.

\subsection{An \texorpdfstring{$\M$}{M}-category of presheaves}
For any small $\M$-category $\cat{C}$, there are various ways of constructing an $\M$-category of presheaves on $\cat{C}$.
One way is the following, and we denote the $\M$-category arising in this way by 
$\PSh_{\M}(\cat{C}) = (\PSh(\cat{C}),\M_{\PSh(\cat{C})})$. We say a map $\mu \colon P \To Q$ is an 
\emph{$\M_{\PSh(\cat{C})}$-map} if for all $\gamma \colon \yon D \To Q$, there exists an $m \in \M_{\cat{C}}$ making the 
following a pullback square:
	\begin{center}\begin{tikzcd}
		\yon C \arrow{r} \arrow[swap]{d}{\yon m} & P \arrow{d}{\mu} \\
		\yon D \arrow[swap]{r}{\gamma} & Q
	\end{tikzcd}\end{center}
where $\yon \colon \cat{C} \to \PSh(\cat{C})$ is the usual Yoneda embedding \cite[p.~252]{MR1871071}. Observe that under 
this construction, the Yoneda embedding is an $\M$-functor $\yon\colon \cat{C} \to \PSh_{\M}(\cat{C})$.

\subsection{Cocomplete \texorpdfstring{$\M$}{M}-categories}
It is well known that for any small $\M$-category $\cat{C}$, the Yoneda embedding $\yon\colon\cat{C}\to\PSh(\cat{C})$ 
exhibits $\PSh(\cat{C})$ as the free cocompletion of $\cat{C}$. Therefore it is natural to ask whether for any small $\M$-category
$\cat{C}$, the Yoneda embedding $\yon\colon\cat{C}\to\PSh_{\M}(\cat{C})$ likewise exhibits $\PSh_{\M}(\cat{C})$ as the free 
cocompletion of $\cat{C}$. First we need to give a definition of cocomplete $\M$-category and cocontinuous $\M$-functor.

\begin{Defn}
	An $\M$-category $(\cat{C},\M_{\cat{C}})$ is cocomplete if $\cat{C}$ is itself cocomplete and its inclusion into 
	$\Par(\cat{C})$ preserves colimits. An $\M$-functor $F \colon (\cat{C},\M_{\cat{C}}) \to (\cat{D},\M_{\cat{D}})$ between 
	$\M$-categories is cocontinuous if the underlying functor $F\colon \cat{C}\to\cat{D}$ is cocontinuous. We denote by 
	$\M\cat{Cocomp}$ the $2$-category of cocomplete $\M$-categories, cocontinuous $\M$-functors and $\M$-cartesian 
	natural transformations.
\end{Defn}

\begin{Ex}\label{SetM}
	Let $\Set$ denote the category of all small sets, and consider the $\M$-category $(\Set,\M_{\Set})$, where $\M_{\Set}$
	are all the injective functions. Then, as $\Set$ is cocomplete and $\Set \hookrightarrow \Par(\Set,\M_{\Set}) = \Set_p$ has a 
	right adjoint, $(\Set,\M_{\Set})$ is a cocomplete $\M$-category.
\end{Ex}

As a matter of fact, there are whole classes of examples of cocomplete $\M$-categories. Before we give their construction,
it will be helpful to define what we mean by an $\M$-subobject.

\begin{Defn}{($\M$-subobjects)}
	Let $\cat{C}$ be an $\M$-category and $D$ an object in $\cat{C}$. Then an $\M$-subobject is an isomorphism
	class of $\M_{\cat{C}}$-maps with codomain $D$. That is, if $m\colon C \to D$ and $m'\colon C' \to D$ are both 
	in $\M_{\cat{C}}$, then $m$ and $m'$ represent the same subobject of $D$ if there exists an isomorphism 
	$\varphi\colon C \to C'$ such that $m = m' \varphi$. We shall use the notation $\Sub_{\M_{\cat{C}}}(D)$ to denote
	the set of subobjects of $D$ in the $\M$-category $\cat{C}$.
\end{Defn}

It will be useful to observe the following lemma in relation to $\M$-subobjects of representables in the $\M$-category 
$\PSh_{\M}(\cat{C})$.

\begin{Lemma}\label{MSubRep}
	Let $\cat{C}$ be an $\M$-category. Then there exists an isomorphism as follows:
		$$ \Sub_{\M_{\PSh(\cat{C})}}(\yon C) \cong \Sub_{\M_{\cat{C}}}(C). $$
\end{Lemma}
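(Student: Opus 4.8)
The plan is to exhibit an explicit bijection between the two sets of subobjects and check it is well-defined on isomorphism classes. The key point is that an $\M_{\PSh(\cat{C})}$-map $\mu \colon P \To \yon C$ is, by definition, required to be a pullback of some $\yon m$ along \emph{every} map $\gamma \colon \yon D \To \yon C$; in particular, taking $\gamma = \id_{\yon C}$ (so $D = C$), there must exist $m \in \M_{\cat{C}}$ fitting into a pullback square with bottom edge the identity on $\yon C$. But a pullback along an identity is an isomorphism, so $\mu$ itself is isomorphic (in the slice over $\yon C$) to $\yon m \colon \yon C' \to \yon C$ for some $m \colon C' \to C$ in $\M_{\cat{C}}$. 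This is the crux: it says every $\M$-subobject of $\yon C$ is represented by a morphism in the image of $\yon$.

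Concretely, I would define the map $\Sub_{\M_{\cat{C}}}(C) \to \Sub_{\M_{\PSh(\cat{C})}}(\yon C)$ by sending the class of $m \colon C' \to C$ to the class of $\yon m \colon \yon C' \to \yon C$. First I would check this lands where claimed, i.e.\ that $\yon m$ really is an $\M_{\PSh(\cat{C})}$-map whenever $m \in \M_{\cat{C}}$: given any $\gamma \colon \yon D \To \yon C$, by Yoneda $\gamma = \yon g$ for a unique $g \colon D \to C$, and the pullback of $m$ along $g$ in $\cat{C}$ exists and lies in $\M_{\cat{C}}$ (stability of the system of monics); since $\yon$ preserves this pullback, we get the required square. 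Well-definedness on isomorphism classes is immediate since $\yon$ is a functor. For injectivity, if $\yon m \cong \yon m'$ over $\yon C$, the connecting isomorphism is $\yon\varphi$ for some $\varphi$ by full faithfulness of $\yon$, and $\varphi$ is then an iso in $\cat{C}$ with $m = m'\varphi$; full faithfulness also forces $\varphi$ to be genuinely invertible, so $m$ and $m'$ represent the same $\M$-subobject of $C$.

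Surjectivity is where the observation above does the work: given an arbitrary $\M_{\PSh(\cat{C})}$-map $\mu \colon P \To \yon C$, apply its defining property to $\gamma = \id_{\yon C}$ to obtain $m \in \M_{\cat{C}}$ and a pullback square
\begin{center}\begin{tikzcd}
	\yon C' \arrow{r}{\theta} \arrow[swap]{d}{\yon m} & P \arrow{d}{\mu} \\
	\yon C \arrow[swap]{r}{\id} & \yon C
\end{tikzcd}\end{center}
A pullback of $\mu$ along an identity arrow is just $\mu$ up to canonical isomorphism; hence $\theta \colon \yon C' \to P$ is an isomorphism and $\mu$, as an object of the slice, equals the class of $\yon m$. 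So $\mu$ is in the image, and the map is a bijection.

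The main obstacle — really the only subtle point — is making the surjectivity argument rigorous: one must confirm that the codomain object appearing in the pullback can be taken to be the representable $\yon C$ itself (this is exactly why the definition of $\M_{\PSh(\cat{C})}$-map quantifies over \emph{all} $\gamma$, including the identity), and that ``pullback along an identity'' genuinely yields an isomorphism $\theta$ rather than merely an isomorphism after passing to some further slice. Everything else is a routine application of the Yoneda lemma, full faithfulness of $\yon$, and stability of $\M_{\cat{C}}$. I would also remark in passing that the bijection is natural in $C$, though the statement as given does not require it.
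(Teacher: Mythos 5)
Your proposal is correct and follows essentially the same route as the paper's proof: both hinge on applying the defining property of $\M_{\PSh(\cat{C})}$-maps to the identity $1_{\yon C}$ to see that every $\M$-subobject of $\yon C$ is represented by $\yon m$ for some $m \in \M_{\cat{C}}$, with full faithfulness of $\yon$ handling injectivity. The only difference is presentational (you check bijectivity of one map where the paper constructs explicit mutually inverse functions), and your extra verification that $\yon m$ lands in $\M_{\PSh(\cat{C})}$ is the observation the paper records just before the lemma when noting that $\yon$ is an $\M$-functor.
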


\begin{proof}
	Define a function $\varphi\colon \Sub_{\M_{\cat{C}}}(C) \to \Sub_{\M_{\PSh(\cat{C})}}(\yon C)$, which takes
	an $\M$-subobject $m\colon D\to C$ to $\yon m\colon \yon D \to \yon C$, a map in $\M_{\PSh(\cat{C})}$.
	To define its inverse, consider the function $\psi\colon \Sub_{\M_{\PSh(\cat{C})}}(\yon C) \to \Sub_{\M_{\cat{C}}}(C)$
	which takes an $\M$-subobject of $\yon C$, $\mu\colon P \to \yon C$, to the unique subobject $n\colon A \to C$ making 
	the diagram on the left a pullback:
		\begin{center}
			\begin{tikzcd}
				\yon A \arrow{r}{\alpha} \arrow[swap]{d}{\yon n} & P \arrow{d}{\mu} \\
				\yon C \arrow[swap]{r}{1_{\yon C}} & \yon C
			\end{tikzcd} \hspace{2cm}
			\begin{tikzcd}[row sep=tiny,column sep=tiny]
				P \arrow[bend left=30]{drrr}{1} \arrow[swap,bend right=30]{dddr}{\mu} \arrow[dashed]{dr}{\beta} & & & \\
				& \yon A \arrow{rr}{\alpha} \arrow[swap]{dd}{\yon n} & & P \arrow{dd}{\mu} \\
				& & \phantom{m} & \\
				& \yon C \arrow[swap]{rr}{1_{\yon C}} & & \yon C
			\end{tikzcd}
		\end{center}
	Clearly $\psi\circ\varphi = 1$. To see that $\varphi\circ\psi = 1$, consider the previous diagram on the right. By definition, 
	there exists a unique map $\beta$ such that $\yon n \circ \beta = \mu$ and $\alpha\beta=1_P$. But 
	$\yon n = \yon n \circ \beta \circ \alpha$ and $\yon n$ is monic, which means $\beta\alpha=1$. Therefore, 
	$\mu$ and $\yon n$ belong to the same isomorphism class of $\M$-subobjects of $\yon C$. Hence 
	$\varphi\circ\psi=1$, and so $\Sub_{\M_{\PSh(\cat{C})}}(\yon C) \cong \Sub_{\M_{\cat{C}}}(C)$.
\end{proof}

Now consider an $\M$-category $(\E,\M_{\E})$, where $\M_{\E}$ is a stable system of monics and $\E$ is a 
cocomplete category with a terminal object $1$ and a generic $\M$-subobject $\tau\colon 1\to\Sigma$. By a generic 
$\M$-subobject (or an $\M$-subobject classifier), we mean an object $\Sigma\in\E$ and an $\M_{\E}$-map 
$\tau\colon 1\to\Sigma$ such that for any $\M_{\E}$-map $m\colon A \to B$, there exists a unique map 
$\tilde{m}\colon B\to\Sigma$ making the following square a pullback:
	\begin{center}\begin{tikzcd}
		A \arrow[swap]{d}{m} \arrow{r} & 1 \arrow{d}{\tau} \\
		B \arrow[swap]{r}{\tilde{m}} & \Sigma
	\end{tikzcd}\end{center}
Suppose the induced pullback functor $\tau^*\colon \E / \Sigma \to\E$ has a right adjoint $\Pi_{\tau}$. Then by
an analogous argument in topos theory \cite[Proposition~2.4.7]{MR1953060}, $\E$ has a \emph{partial map classifier}
for every object $C\in\E$, and this in turn implies that the inclusion $\E \hookrightarrow \Par(\E,\M_{\E})$ has a right 
adjoint \cite[p.~65]{MR1963657}, and so $\M$-categories of this kind are cocomplete.

\begin{Exs}\label{CocompMCat}
	\begin{enumerate}[label=(\arabic*)]
		\item Let $\E$ be any cocomplete elementary topos, and let $\M_{\E}$ be all the monics in $\E$. Then $(\E,\M_{\E})$ 
				is a cocomplete $\M$-category since $\E$ is locally cartesian closed and has a generic subobject.
		\item If $\E$ is any cocomplete quasitopos and $\M_{\E}$ are all the regular monics in $\E$, then
				$(\E,\M_{\E})$ is also a cocomplete $\M$-category as it is locally cartesian closed and has an object which
				classifies all the regular monics in $\E$.
		\item 
				We know the presheaf category on any small category $\cat{C}$ is cocomplete and locally cartesian closed.
				So consider the $\M$-category $\PSh_{\M}(\cat{C})$. If an $\M$-subobject classifier were to exist,
				then by Yoneda, we would have
					$$ \Sigma (C) \cong \PSh(\cat{C})(\yon C,\Sigma) \cong \Sub_{\M_{\PSh(\cat{C})}}(\yon C). $$
				But because $\Sub_{\M_{\PSh(\cat{C})}}(\yon C) \cong \Sub_{\M_{\cat{C}}}(C)$ (Lemma \ref{MSubRep}), 
				define $\Sigma$ to take objects $C\in \cat{C}$ to the set of $\M$-subobjects of $C$, and maps 
				$f\colon D \to C$ in $\cat{C}$ to $f^*$, the change-of-base functor (by pullback along $f$). Finally, define the 
				map $\tau\colon 1\to\Sigma$ componentwise at $C\in\cat{C}$ by taking the singleton to the largest 
				$\M$-subobject of $C$, the identity on $C$.
				
				It is then not difficult to check that this map $\tau\colon 1\to\Sigma$ is in $\M_{\PSh(\cat{C})}$, and also 
				classifies all $\M_{\PSh(\cat{C})}$-maps. Hence, $\PSh_{\M}(\cat{C})$ is a cocomplete $\M$-category.
	\end{enumerate}
\end{Exs}

The following proposition gives an alternative characterisation of the inclusion $\cat{C} \hookrightarrow \Par(\cat{C})$ being 
cocontinuous for a cocomplete category $\cat{C}$.

\begin{Prop}\label{MCocompAlt}
	Suppose $(\cat{C},\M_{\cat{C}})$ is an $\M$-category, and $\cat{C}$ is cocomplete. Then the following statements 
	are equivalent:
	\begin{enumerate}[label=(\arabic*)]
		\item The inclusion $\cat{C} \hookrightarrow \Par(\cat{C})$ preserves colimits;
		\item The following conditions hold:
				\begin{enumerate}[label=(\alph*)]
					\item If $\{m_i\colon A_i \to B_i \}_{i\in I}$ is a family of maps in $\M_{\cat{C}}$ indexed by a small set $I$, 
						then their coproduct $\sum_{i\in I} m_i$ is in $\M_{\cat{C}}$ and the following squares are pullbacks
						for every $i\in I$:
						\begin{center}\begin{tikzcd}
							A_i \arrow{r}{\imath_{A_i}} \arrow[swap]{d}{m_i} & \sum_{i\in I} A_i \arrow{d}{\sum_{i\in I} m_i} \\
							B_i \arrow[swap]{r}{\imath_{B_i}} & \sum_{i\in I} B_i
						\end{tikzcd}\end{center}
					\item Suppose $m\in\M_{\cat{C}}$ and the pullback of $m$ along two maps $f,g \in \cat{C}$ is the same
						 map $h$. If $f',g'$ are the pullbacks of $f,g$ along $m$, and $c,c'$ are the coequalisers of $f,g$ and $f',g'$
						 respectively, then the unique $n$ making the right square commute is in $\M_{\cat{C}}$ and also
						 makes the right square a pullback:
						\begin{center}\begin{tikzcd}
							\bullet \arrow[start anchor=20,end anchor=160]{r}{f'} 
										\arrow[start anchor=base east,end anchor=base west,swap]{r}{g'} \arrow[swap]{d}{h} 
									& \bullet \arrow{r}{c'} \arrow{d}{m} & \bullet \arrow{d}{n} \\
							\bullet \arrow[start anchor=20,end anchor=160]{r}{f} 
										\arrow[start anchor=base east,end anchor=base west,swap]{r}{g} & \bullet \arrow[swap]{r}{c} & \bullet
						\end{tikzcd}\end{center}
					\item Colimits are stable under pullback along $\M_{\cat{C}}$-maps.
				\end{enumerate}
	\end{enumerate}
\end{Prop}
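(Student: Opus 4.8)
The plan is to reduce the statement to the two generating kinds of colimit. In a cocomplete category every colimit is a coequaliser of a map between coproducts, so the inclusion $I\colon\cat{C}\hookrightarrow\Par(\cat{C})$ preserves all colimits precisely when it preserves small coproducts and coequalisers and these are computed in $\Par(\cat{C})$ by the same presentation. Accordingly I would aim to match condition (a) with ``$I$ preserves small coproducts'', condition (b) with ``$I$ preserves coequalisers'', and to use condition (c) as the stability ingredient needed both to reassemble a general colimit from these pieces and --- since composition in $\Par(\cat{C})$ is computed by pullback along $\M_{\cat{C}}$-maps --- to recognise a cocone of partial maps as universal.

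For the direction $(1)\Rightarrow(2)$ I would specialise the hypothesis. To obtain (a), apply preservation of coproducts to the discrete diagrams on the $A_i$ and on the $B_i$: the universal property of $\sum_i B_i$ in $\Par(\cat{C})$ turns the family of partial maps $(m_i,\imath_{A_i})\colon B_i\rightharpoonup\sum_i A_i$ into a single partial map $\sum_i B_i\rightharpoonup\sum_i A_i$, and chasing its composites with the coproduct injections --- these being computed by pullback in $\cat{C}$ --- forces $\sum_i m_i\in\M_{\cat{C}}$ and makes the injection squares pullbacks. To obtain (b), run the same argument with the parallel pair $f',g'$ in place of a discrete family: $I$ must send the coequaliser $c'$ of $f',g'$ to a coequaliser in $\Par(\cat{C})$, and unwinding the induced partial map out of $\mathrm{coeq}(f',g')$ yields simultaneously that $n\in\M_{\cat{C}}$ and that the right-hand square is a pullback. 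To obtain (c), take a colimit cocone $\lambda_j\colon D_j\to L$ in $\cat{C}$ and an $\M_{\cat{C}}$-map $m\colon M\to L$, and compare inside $\Par(\cat{C})$ the colimiting cocone $I\lambda_j$ with the cocone obtained by composing it with $(m,1_M)\colon L\rightharpoonup M$; exploiting the universal property of $IL=\colim ID$ in $\Par(\cat{C})$ one reads off that $M$, with its projections from the pullbacks $D_j\times_L M$, is the colimit of the pulled-back diagram computed in $\cat{C}$.

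For $(2)\Rightarrow(1)$ I would construct the colimit in $\Par(\cat{C})$ directly. Let $D$ be a small diagram, $L=\colim_{\cat{C}}D$ with cocone $\lambda_j$, and let $(m_j,f_j)\colon D_j\rightharpoonup X$ be a cocone over $D$ in $\Par(\cat{C})$, where $m_j\colon M_j\to D_j$ lies in $\M_{\cat{C}}$. The cocone condition is exactly the statement that the $M_j$ assemble into a diagram $M_\bullet$, that $m_\bullet\colon M_\bullet\Rightarrow D$ is a pointwise pullback, and that the $f_j$ form a cocone on $M_\bullet$. Conditions (a), (b) and (c) --- the coproduct case, the coequaliser case, and stability --- together then show that the induced map $n\colon\colim_{\cat{C}}M_\bullet\to L$ lies in $\M_{\cat{C}}$, that each naturality square is a pullback, and that the pullback of $n$ along $\lambda_j$ is again $m_j$; the $f_j$ induce a leg $\colim_{\cat{C}}M_\bullet\to X$, and the span consisting of $n$ followed by this leg is the required mediating partial map $L\rightharpoonup X$. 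Uniqueness follows from the same rigidity: a competing span has, naturally in $j$, isomorphic pullbacks along the $\lambda_j$, hence isomorphic domain over $L$. Since this argument works for every $D$, the cocone $I\lambda_j$ is always colimiting and $I$ preserves all colimits.

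The main obstacle I expect is the bookkeeping on the $\Par(\cat{C})$ side of the direction $(1)\Rightarrow(2)$: converting the abstract universal property into the concrete span-level pullback conditions (a) and (b) means tracking carefully which leg of which span is being pulled back along which, and in (b) one must pin down exactly the parallel pair $f',g'$ and the square being claimed to be a pullback; the extraction of (c) is the most delicate, since it amounts to recovering an $\M_{\cat{C}}$-subobject of $L$ from its pullbacks along the $\lambda_j$. The companion subtlety in $(2)\Rightarrow(1)$ is that two spans representing the same partial map differ only by an isomorphism, so the pointwise isomorphisms produced on the coproduct summands, or on the ``numerator'' of a coequaliser, must be shown to cohere into a single isomorphism over the colimit --- and it is precisely the pullback clauses of (a) and (b) together with the stability condition (c) that make this coherence available.
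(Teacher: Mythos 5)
Your overall architecture matches the paper's: reduce $(2)\Rightarrow(1)$ to coproducts and coequalisers with condition (c) supplying uniqueness of the mediating span, and obtain $(1)\Rightarrow(2)$ by specialising the universal property in $\Par(\cat{C})$ to discrete and parallel-pair diagrams --- indeed the cocone $(m_i,\imath_{A_i})\colon B_i\to\sum_i A_i$ you induce from is exactly the one the paper uses inside Lemma \ref{MCatLem2}. (A minor structural difference: the paper verifies preservation of coproducts and coequalisers separately and invokes the standard decomposition of colimits, whereas you assemble a general diagram directly; that is workable but obliges you to push the coequaliser-of-coproducts presentation through (a) and (b) explicitly, checking in particular that the two pullbacks of $\sum_j m_j$ along the parallel pair coincide.)

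The genuine gap is in $(1)\Rightarrow(2\mathrm{a},2\mathrm{b})$, at the words ``chasing its composites with the coproduct injections forces $\sum_i m_i\in\M_{\cat{C}}$''. What those composites actually give is a partial map $(n,g)\colon\sum_i B_i\to\sum_i A_i$ whose left leg $n\colon N\to\sum_i B_i$ lies in $\M_{\cat{C}}$ and pulls back along each $\imath_{B_i}$ to $m_i$. This does \emph{not} yet identify $N$ with $\sum_i A_i$: the canonical comparison $k\colon\sum_i A_i\to N$ satisfies $gk=1$ and $nk=\sum_i m_i$, but to get $kg=1$ (equivalently $n=(\sum_i m_i)\circ g$, since $n$ is monic) one cannot argue on the summands, because the pullbacks $A_i\to N$ need not be jointly epimorphic --- the paper's own example $\Delta\colon\mbb{Z}\to\mbb{Z}\oplus\mbb{Z}$ in $\cat{Ab}$, whose pullbacks along both coprojections are $0$, shows that an $\M$-subobject of a coproduct is not determined by its restrictions to the summands. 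The paper closes exactly this hole in Lemma \ref{MCatLem2} by a second appeal to the uniqueness clause of the colimit in $\Par(\cat{C})$, applied to the composites $(1,\sum_i m_i)\circ(n,g)$ and $(n,g)\circ(1,\sum_i m_i)$, together with Lemma \ref{MCatLem1} (a colimit of restriction idempotents along total coprojections is again a restriction idempotent) to deduce $n=(\sum_i m_i)\circ g$ and hence that $(1,\sum_i m_i)$ is a restriction monic. Some such additional use of uniqueness is indispensable; the same issue recurs verbatim in your coequaliser case (b) and in reading off (c), so you should isolate it as a lemma rather than treat it as a diagram chase.
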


\begin{proof}
	For the proof of $(1) \implies (2)$, we will be using Lemma \ref{MCatLem2} and Corollary \ref{MCatLem3Cor} (both
	to be proven later).
	
	$(1) \implies (2a)$ Let $I$ be a small discrete category, and let $H,K \colon I \to\cat{C}$ be functors taking objects
	$i\in I$ to $A_i$ and $B_i$ respectively. Let $\alpha\colon H\To K$ be a natural transformation whose component at
	$i$ is given by $m_i\colon A_i\to B_i$, and observe that all naturality squares are trivially pullbacks. Then by 
	Lemma \ref{MCatLem2}, the sum $\sum_{i\in I}m_i$ is in $\M_{\cat{C}}$ and for every $i\in I$, the coproduct 
	coprojection squares are pullbacks.
	
	$(1) \implies (2b)$ Take $I$ to be the category with two objects and a pair of parallel maps between them and
	apply Lemma \ref{MCatLem2}.
	
	$(1) \implies (2c)$ See Corollary \ref{MCatLem3Cor}.
	
	$(2) \implies (1)$ Conversely, to show that the inclusion $\cat{C}\hookrightarrow\Par(\cat{C})$ is 
	cocontinuous, it is enough to show that it preserves all small coproducts and coequalisers. 
	
	So suppose $c$ is a coequaliser of $f$ and $g$ in $\cat{C}$.
		\begin{center}\begin{tikzcd}
			\bullet \arrow[start anchor=20,end anchor=160]{r}{f} 
						\arrow[start anchor=base east,end anchor=base west,swap]{r}{g} & \bullet \arrow{r}{c} & \bullet
		\end{tikzcd}\end{center}
	To show the inclusion preserves this coequaliser, we need to show that for any map $(m,k)$ such that 
	$(m,k)(1,f) = (m,k)(1,g)$, there is a unique map $(n,q)$ making the following diagram commute:
		\begin{center}\begin{tikzcd}
			\bullet \arrow[start anchor=20,end anchor=160]{r}{(1,f)} 
						\arrow[start anchor=base east,end anchor=base west,swap]{r}{(1,g)} 
				& \bullet \arrow{r}{(1,c)} \arrow[swap]{dr}{(m,k)} & \bullet \arrow[dashed]{d}{(n,q)} \\
				& & \bullet
		\end{tikzcd}\end{center}
	Now the condition $(m,k)(1,f) = (m,k)(1,g)$ is precisely the condition that the pullbacks of $m$ along $f$ and $g$ are 
	the same map $h$,
		\begin{center}\begin{tikzcd}
			\bullet \arrow[start anchor=20,end anchor=160]{r}{f'} 
						\arrow[start anchor=base east,end anchor=base west,swap]{r}{g'} \arrow[swap]{d}{h} 
					& \bullet \arrow{d}{m} \\
			\bullet \arrow[start anchor=20,end anchor=160]{r}{f} 
						\arrow[start anchor=base east,end anchor=base west,swap]{r}{g} & \bullet
		\end{tikzcd}\end{center}
	and that $kf'=kg'$. Taking $c'$ to be the coequaliser of $f'$ and $g'$, our assumption then implies there
	is a unique map $n\in \M_{\cat{C}}$ making the following diagram a pullback:
		\begin{center}\begin{tikzcd}
			\bullet \arrow{r}{c'} \arrow[swap]{d}{m} & \bullet \arrow{d}{n} \\
			\bullet \arrow[swap]{r}{c} & \bullet
		\end{tikzcd}\end{center}
	Since $c'$ is the coequaliser of $f'$ and $g'$ and $kf'=kg'$, there exists a unique map $q$ such that
	$c'q=k$. This gives a map $(n,q) \in\Par(\cat{C})$ such that $(n,q)(1,c)=(m,k)$. To see it must be unique,
	suppose $(n',q')$ also satisfies the condition $(n',q')(1,c)=(m,k)$. By assumption, as colimits are stable under 
	pullback along $\M_{\cat{C}}$-maps, the pullback of $c$ along $n'$ must be a coequaliser of 
	$f'$ and $g'$, say $c''$.
	\begin{center}\begin{tikzcd}[row sep=small,column sep=small]
		\bullet \arrow{rrr}{c''} \arrow[swap]{ddd}{m} \arrow{drr}{c'} & & & \bullet \arrow{ddd}{n'} \\
		& & \bullet \arrow{ur}{\varphi} \arrow{ddr}{n} & \\
		& & & \\
		\bullet \arrow[swap]{rrr}{c} & & & \bullet 
	\end{tikzcd}\end{center}
	Now as coequalisers are unique up to isomorphism, there is an isomoprhism $\varphi$ such that 
	$c'' = \varphi c'$. But the fact
		$$ n' \varphi c' = n' c'' = cm = n c' $$
	implies $n'\varphi = n$ as $c'$ is an epimorphism. In other words, $n$ and $n'$ 
	must be the same $\M$-subobject. Similarly, $q=q'\varphi$, which means $(n,q)=(n',q')$.
	
	Next, suppose $\sum_{i\in I}B_i$ is a small coproduct in $\cat{C}$, with coproduct coprojections 
	$(\imath_{B_i}\colon B_i \to \sum_{i\in I} B_i)_{i\in I}$. Then $\sum_{i\in I}B_i$ will 
	be a small coproduct in $\Par(\cat{C})$ if for any object $D\in\Par(\cat{C})$ and family of maps 
	$\big( (m_i,f_i) \colon B_i \to D \big)_{i\in I}$, there exists a unique map $(\mu,\gamma)\colon \sum_{i\in I}B_i \to D$ 
	making the following diagram commute for every $i\in I$:
		\begin{center}\begin{tikzcd}
			B_i \arrow{r}{(1,\imath_{B_i})} \arrow[swap]{dr}{(m_i,f_i)} & \sum_{i\in I} B_i \arrow{d}{(\mu,\gamma)} \\
			& D
		\end{tikzcd}\end{center}
	By assumption, $\sum_{i\in I}m_i$ is in $\M_{\cat{C}}$, and so the map 
	$\left(\sum_{i\in I}m_i, f \right)\colon \sum_{i\in I}B_i \to D$ is well-defined, where $f$ is the unique map
	$\sum_{i\in I} \dom(f_i) \to D$ induced by the universal property of the coproduct coprojections and
	the family of maps $\{ f_i\}_{i\in I}$. Since the coproduct coprojection squares are pullbacks, taking
	$\mu = \sum_{i\in I}m_i$ and $\gamma = f$ certainly makes the above diagram commute, and the uniqueness of 
	$(\mu,\gamma)$ follows by an analogous argument to the case of coequalisers by the stability of colimits under 
	pullback. Therefore, if $\sum_{i\in I} B_i$ is a small coproduct in $\cat{C}$, it is also a small coproduct in $\Par(\cat{C})$.
	
	Therefore, since the inclusion $\cat{C} \hookrightarrow \Par(\cat{C})$ preserves all small coproducts and all
	coequalisers, it preserves all small colimits.
\end{proof}

\begin{Rk}
	There is yet another formulation for the condition that the inclusion $\cat{C} \hookrightarrow \Par(\cat{C})$
	preserves all small colimits. That is, the inclusion is cocontinuous if and only if the functor 
	$\Sub_{\M_{\cat{C}}} \colon \cat{C}^{\op} \to \Set$, which on objects takes $C$ to the set of 
	$\M$-subobjects of $C$, is continuous, and moreover, colimits are stable under pullback
	along $\M_{\cat{C}}$-maps. The proof of this result is similar to the proof of Lemma \ref{MCocompAlt}.
	
	Also, by conditions (2a) and (2c), observe that cocomplete $\M$-categories must be $\M$-extensive, meaning that 
	for every $i\in I$ (with $I$ small), if the following square is commutative with the bottom row being coproduct 
	injections and $m,m_i \in \M$ (for all $i\in I$), then the top row must be a coproduct diagram if and only if each 
	square is a pullback:
	\begin{center}\begin{tikzcd}
		A_i \arrow{r} \arrow[swap]{d}{m_i} & Z \arrow{d}{m} \\
		B_i \arrow{r}{\imath_{B_i}} & \sum_{i\in I} B_i
	\end{tikzcd}\end{center}
\end{Rk}

In light of the previous proposition, we give an example of an $\M$-category which is not cocomplete.

\begin{Ex} 
	Consider the $\M$-category $(\cat{Ab},\M_{\cat{Ab}})$ of small abelian groups and all monomorphisms in $\cat{Ab}$. 
	Denote the trivial group by $0$ and the group of integers by $\mbb{Z}$. The coproduct of $\mbb{Z}$ with itself is just 
	the direct sum $\mbb{Z} \oplus\mbb{Z}$, along with coprojections $\imath_1\colon\mbb{Z}\to\mbb{Z}\oplus\mbb{Z}$ and 
	$\imath_2\colon\mbb{Z}\to\mbb{Z}\oplus\mbb{Z}$ sending $n$ to $(n,0)$ and $(0,n)$ respectively. Let
	$\Delta\colon\mbb{Z}\to\mbb{Z}\oplus\mbb{Z}$ denote the diagonal map, which is clearly a monomorphism and
	hence lies in $\M_{\cat{Ab}}$. Now a pullback of $\Delta$ along $\imath_1$ is the unique map $0\to\mbb{Z}$, and
	similarly for $\imath_2$. This gives the following diagram, where both squares are pullbacks:
	\begin{center}\begin{tikzcd}
		0 \arrow{r} \arrow{d} & \mbb{Z} \arrow{d}{\Delta} & 0 \arrow{l} \arrow{d} \\
		\mbb{Z} \arrow[swap]{r}{\imath_1} & \mbb{Z}\oplus\mbb{Z} & \mbb{Z} \arrow{l}{\imath_2}
	\end{tikzcd}\end{center}
	However, the top row is certainly not a coproduct diagram in $\cat{Ab}$. Therefore, $(\cat{Ab},\M_{\cat{Ab}})$ is not
	$\M$-extensive, and hence by Proposition \ref{MCocompAlt}, is not a cocomplete $\M$-category.
\end{Ex}

\subsection{Cocompletion of \texorpdfstring{$\M$}{M}-categories}
Our goal is to show for any small $\M$-category $\cat{C}$ and cocomplete $\M$-category $\cat{D}$, the following is an equivalence:
	$$ (-)\circ\yon\colon \M\cat{Cocomp}(\PSh_{\M}(\cat{C}),\cat{D}) \to \M\Cat(\cat{C},\cat{D}). $$
To do so will require the next four lemmas.
	\begin{Lemma}\label{MCatLem0}
		Let $\cat{C}$ be an $\M$-category and let $m\in\M_{\cat{C}}$. Then the following is a pullback square
			\begin{center}\begin{tikzcd}
				A \arrow{r}{g} \arrow[swap]{d}{n} & B \arrow{d}{m} \\
				C \arrow[swap]{r}{f} & D
			\end{tikzcd}\end{center}
		if and only if the following diagram commutes in $\Par(\cat{C})$:
			\begin{center}\begin{tikzcd}
				C \arrow{r}{(1,f)} \arrow[swap]{d}{(n,1)} & D \arrow{d}{(m,1)} \\
				A \arrow[swap]{r}{(1,g)} & B
			\end{tikzcd}\end{center}
	\end{Lemma}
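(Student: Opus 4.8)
The plan is to chase the definition of composition in $\Par(\cat{C})$ through both directions. Recall that in $\Par(\cat{C})$ the composite of $(m,1)\colon D\to B$ (that is, the span $D\xleftarrow{m}B\xrightarrow{1}B$) with $(1,f)\colon C\to D$ is formed by pulling back $m$ along $f$; if $n'\colon A'\to C$ and $g'\colon A'\to B$ name this pullback, then $(m,1)\circ(1,f)=(n',g')$. Dually, the composite $(1,g)\circ(n,1)$ where $(n,1)\colon C\to A$ is the span $C\xleftarrow{n}A\xrightarrow{1}A$ and $(1,g)\colon A\to B$: here we pull back $1_A$ along $g$, which just gives $(n,g)$ back (the identity span composes trivially on that side), so $(1,g)\circ(n,1)=(n,g)$.

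So the square in $\Par(\cat{C})$ commutes precisely when $(n',g')\sim(n,g)$ as spans from $C$ to $B$, i.e. when there is an isomorphism $\varphi\colon A'\to A$ with $n'=n\varphi$ and $g'=g\varphi$. First I would show $(\Leftarrow)$: assume the original square $mg=fn$ is a pullback. Then by definition the pullback of $m$ along $f$ can be taken to be exactly $n\colon A\to C$ with second leg $g\colon A\to B$, so we may pick $(n',g')=(n,g)$ on the nose, and the $\Par$-square commutes. For $(\Rightarrow)$: assume the $\Par$-square commutes, so $(n,g)\sim(n',g')$ where $(n',g')$ is a chosen pullback of $m$ along $f$. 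Transporting the pullback property across the isomorphism $\varphi$ shows that $n,g$ also exhibit a pullback of $m$ along $f$; hence the original square is a pullback.

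The only genuinely delicate point is bookkeeping: being careful that "the map $(n,1)$" really denotes the span with the $\M$-leg being $n$ (which requires $n\in\M_{\cat{C}}$ — true here since it is a pullback of $m\in\M_{\cat{C}}$) and the total leg being $1$, and similarly for $(m,1)$; and that composition in $\Par(\cat{C})$ is only defined up to the span equivalence $\sim$, so both implications must be phrased modulo that equivalence. Once the composites are identified as above, the equivalence of the two statements is immediate from the universal property of pullbacks, since a span equivalent to a pullback span is itself a pullback span. I do not expect any real obstacle beyond getting these span conventions straight and matching them to the paper's stated formula that the composite is "by pullback" with identity $(1,1)$.
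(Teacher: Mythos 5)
Your proof is correct and is precisely the ``diagram chase'' the paper leaves to the reader: unfold composition in $\Par(\cat{C})$ to identify $(m,1)\circ(1,f)$ with a chosen pullback span of $m$ along $f$ and $(1,g)\circ(n,1)$ with $(n,g)$, then use that span equivalence preserves and reflects the pullback property. The only quibble is a harmless slip in saying one pulls back $1_A$ ``along $g$'' (it is pulled back along the identity leg of $(n,1)$), which does not affect the conclusion.
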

	\begin{proof}
		Diagram chase.
	\end{proof}
	
	\begin{Lemma}\label{MCatLem1}
		Let $\cat{X}$ be a restriction category and $\cat{I}$ any small category. Suppose given $L \colon \cat{I}\to\cat{X}$ and a colimiting
		cocone $p_I \colon LI \to \colim L$ whose colimit coprojections are total. If $\epsilon \colon L \To L$ is a natural
		transformation such that each component is a restriction idempotent, then $\colim \epsilon$ is also a restriction idempotent.
			\begin{center}\begin{tikzcd}
				LI \arrow{r}{p_I} \arrow[swap]{d}{\epsilon_I} & \colim L \arrow{d}{\colim \epsilon} \\
				LI \arrow[swap]{r}{p_I} & \colim L
			\end{tikzcd}\end{center}
	\end{Lemma}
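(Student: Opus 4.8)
The plan is to use the universal property of $\colim L$ to show directly that $\ov{\colim\epsilon}=\colim\epsilon$, i.e.\ that $\colim\epsilon$ equals its own restriction. First I would record that, by naturality of $\epsilon$, the family $(p_I\circ\epsilon_I\colon LI\to\colim L)_{I}$ is again a cocone on $L$: for $u\colon I\to J$ in $\cat I$ one has $p_J\circ\epsilon_J\circ Lu=p_J\circ Lu\circ\epsilon_I=p_I\circ\epsilon_I$. Since $\colim\epsilon$ is by definition the unique morphism $\colim L\to\colim L$ with $(\colim\epsilon)\circ p_I=p_I\circ\epsilon_I$ for all $I$, it suffices to prove that $\ov{\colim\epsilon}$ satisfies the same equations; the conclusion then follows by uniqueness of factorisations through the colimiting cocone.

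The core computation is then as follows. Fix $I$ and write $h=\colim\epsilon$. Applying axiom (R4) to $p_I\colon LI\to\colim L$ and $h\colon\colim L\to\colim L$, and using $h\circ p_I=p_I\circ\epsilon_I$, gives
\[ \ov{h}\circ p_I = p_I\circ\ov{h\circ p_I} = p_I\circ\ov{p_I\circ\epsilon_I}. \]
It remains to check $p_I\circ\ov{p_I\circ\epsilon_I}=p_I\circ\epsilon_I$. Since $\epsilon_I$ is a restriction idempotent, $\ov{\epsilon_I}=\epsilon_I$, so the basic property $\bar f\circ\ov{gf}=\ov{gf}$ yields $\epsilon_I\circ\ov{p_I\circ\epsilon_I}=\ov{p_I\circ\epsilon_I}$, whence $p_I\circ\ov{p_I\circ\epsilon_I}=p_I\circ\epsilon_I\circ\ov{p_I\circ\epsilon_I}$. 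On the other hand, applying (R4) to $\epsilon_I\colon LI\to LI$ and $p_I\colon LI\to\colim L$ and using that $p_I$ is total (so $\ov{p_I}=1$) gives $\epsilon_I=\ov{p_I}\circ\epsilon_I=\epsilon_I\circ\ov{p_I\circ\epsilon_I}$. Combining the two, $p_I\circ\ov{p_I\circ\epsilon_I}=p_I\circ(\epsilon_I\circ\ov{p_I\circ\epsilon_I})=p_I\circ\epsilon_I$, as wanted.

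Therefore $\ov{\colim\epsilon}\circ p_I=p_I\circ\epsilon_I=(\colim\epsilon)\circ p_I$ for every $I$, and uniqueness forces $\ov{\colim\epsilon}=\colim\epsilon$, so $\colim\epsilon$ is a restriction idempotent. The only substantive step is the middle one, where (R4) is combined with the two hypotheses — totality of the coprojections $p_I$ and each $\epsilon_I$ being a restriction idempotent; everything else is bookkeeping with the colimit. The point to be careful about is that $(p_I\circ\epsilon_I)_I$ is genuinely a cocone (this is precisely where naturality of $\epsilon$ enters) and that $\colim\epsilon$ and $\ov{\colim\epsilon}$ are being compared as two maps induced by the \emph{same} colimiting cocone; there is no real obstacle beyond keeping track of these.
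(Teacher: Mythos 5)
Your proof is correct and follows essentially the same route as the paper's: apply (R4) to get $\ov{\colim\epsilon}\circ p_I = p_I\circ\ov{p_I\circ\epsilon_I}$, simplify using totality of $p_I$ and idempotency of $\epsilon_I$ to reach $p_I\circ\epsilon_I$, and conclude by uniqueness of the induced map out of the colimit. The only difference is cosmetic: the paper collapses $\ov{p_I\circ\epsilon_I}$ to $\epsilon_I$ in one line via the identity $\ov{gf}=\ov{\bar g f}$, whereas you reach the same point through $\bar f\circ\ov{gf}=\ov{gf}$ and a second application of (R4).
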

	\begin{proof}
		By the fact $\ov{p_I}=1$ and $\epsilon_I=\ov{\epsilon_I}$, we have
			$$ \ov{\colim\epsilon} \circ p_I = p_I \circ \ov{\colim\epsilon \circ p_I} = p_I \circ \ov{p_I \circ \epsilon_I} = 
				p_I \circ \ov{\ov{p_I} \circ\epsilon_I} = p_I \circ \ov{\epsilon_I}=p_I \circ \epsilon_I. $$
		Therefore, $\colim\epsilon = \ov{\colim\epsilon}$ by uniqueness.
	\end{proof}
	
	\begin{Lemma}\label{MCatLem2}
		Let $\cat{C}$ be a cocomplete $\M$-category, and let $H,K \colon \cat{I}\to\cat{C}$ be functors (with $\cat{I}$ small). 
		Suppose $\alpha\colon H \To K$ is a natural transformation such that for each $I\in\cat{I}$, $\alpha_I$ is in $\M_{\cat{C}}$ 
		and all naturality squares are pullbacks:
			\begin{center}\begin{tikzcd}
				HI \arrow{r}{Hf} \arrow[swap]{d}{\alpha_I} & HJ \arrow{d}{\alpha_J} \\
				KI \arrow[swap]{r}{Kf} & KJ
			\end{tikzcd}\end{center}
		Then $\colim\alpha$ is in $\M_{\cat{C}}$, and the following is a pullback for every $I\in\cat{I}$:
			\begin{center}\begin{tikzcd}
				HI \arrow{r}{p_I} \arrow[swap]{d}{\alpha_I} & \colim H \arrow{d}{\colim\alpha} \\
				KI \arrow[swap]{r}{q_I} & \colim K
			\end{tikzcd}\end{center}
		where $p_I,q_I$ are colimit coprojections.
	\end{Lemma}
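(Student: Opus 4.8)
The plan is to push the whole statement into the split restriction category $\Par(\cat{C})$, exploiting that the inclusion $\iota\colon\cat{C}\hookrightarrow\Par(\cat{C})$ preserves colimits --- which is exactly what makes $(\cat{C},\M_{\cat{C}})$ a cocomplete $\M$-category. Write $\iota H,\iota K\colon\cat{I}\to\Par(\cat{C})$ for the composites with $\iota$, and $\iota\alpha\colon\iota H\To\iota K$ for the (trivially natural) family with components $(1_{HI},\alpha_I)$. The hypothesis on $\alpha$ says precisely that each of its naturality squares is a pullback with both vertical legs in $\M_{\cat{C}}$, so by Lemma \ref{MCatLem0} each such square becomes a commuting square in $\Par(\cat{C})$; one checks that these commuting squares are precisely the naturality squares of a natural transformation $\tilde\alpha\colon\iota K\To\iota H$ with components $\tilde\alpha_I=(\alpha_I,1_{HI})$. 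A short span computation then gives $\tilde\alpha\circ\iota\alpha=1_{\iota H}$ and $\iota\alpha\circ\tilde\alpha=e$, where $e\colon\iota K\To\iota K$ is the natural transformation whose components are the restriction idempotents $e_I=(\alpha_I,\alpha_I)$. In other words, $e$ is an idempotent natural transformation that splits through $\iota H$ inside the functor category $[\cat{I},\Par(\cat{C})]$.

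Next I would pass to colimits in $\Par(\cat{C})$. Since $\iota$ is cocontinuous, $\iota H$ and $\iota K$ have colimits $\colim H$ and $\colim K$ there, with coprojections $(1,p_I)$ and $(1,q_I)$, and $\colim(\iota\alpha)=(1,\colim\alpha)$. As the formation of induced maps on colimits is functorial, the two identities above give $\colim\tilde\alpha\circ(1,\colim\alpha)=1$ and $(1,\colim\alpha)\circ\colim\tilde\alpha=\colim e$, so that $(1,\colim\alpha)\colon\colim H\to\colim K$ and $\colim\tilde\alpha\colon\colim K\to\colim H$ exhibit $\colim e$ as split through $\colim H$. The coprojections $(1,q_I)$ of $\colim(\iota K)$ are total, so Lemma \ref{MCatLem1} applies to $e$ and shows that $\colim e$ is a restriction idempotent; hence $(1,\colim\alpha)$ is exactly the restriction monic obtained from the splitting of $\colim e$. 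Since, under the equivalence $\cat{C}\simeq\M\Total(\Par(\cat{C}))$, the stable system $\M_{\cat{C}}$ corresponds to the restriction monics of $\Par(\cat{C})$, we conclude $\colim\alpha\in\M_{\cat{C}}$.

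For the coprojection squares, both $\alpha_I$ and $\colim\alpha$ now lie in $\M_{\cat{C}}$, so Lemma \ref{MCatLem0} reduces the assertion that the square in the statement is a pullback to the commutativity in $\Par(\cat{C})$ of the square with vertical maps $(\alpha_I,1)\colon KI\to HI$ and $(\colim\alpha,1)\colon\colim K\to\colim H$ and horizontal maps $(1,q_I)$ and $(1,p_I)$. Using that a restriction monic has a unique partial inverse, the maps $(\colim\alpha,1)$ and $\colim\tilde\alpha$ --- both partial inverses of the restriction monic $(1,\colim\alpha)$ --- must agree; so the required identity $(\colim\alpha,1)\circ(1,q_I)=(1,p_I)\circ(\alpha_I,1)$ is just the cocone equation $\colim\tilde\alpha\circ(1,q_I)=(1,p_I)\circ\tilde\alpha_I$ defining $\colim\tilde\alpha$, and Lemma \ref{MCatLem0} converts this back into the pullback property of the coprojection square.

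The step I expect to demand the most care is the first: recognising that the cartesian hypothesis on $\alpha$ is exactly what promotes $(\alpha_I,1_{HI})$ to a natural transformation $\iota K\To\iota H$, and getting the variances right in $\tilde\alpha\circ\iota\alpha=1_{\iota H}$ and $\iota\alpha\circ\tilde\alpha=e$ so that $e$ really does split through $\iota H$ in $[\cat{I},\Par(\cat{C})]$. After that everything is formal: cocontinuity of $\iota$ transports the splitting to the colimit, Lemma \ref{MCatLem1} upgrades $\colim e$ to a restriction idempotent, and two uses of Lemma \ref{MCatLem0} together with uniqueness of partial inverses deliver both halves of the statement.
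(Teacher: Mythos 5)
Your proposal is correct and follows essentially the same route as the paper: both transfer the data to $\Par(\cat{C})$ via Lemma \ref{MCatLem0}, build the reverse transformation with components $(\alpha_I,1)$, use cocontinuity of the inclusion together with Lemma \ref{MCatLem1} to exhibit $(1,\colim\alpha)$ as a splitting of a restriction idempotent, and then identify $\colim\tilde\alpha$ with $(\colim\alpha,1)$ to convert the coprojection squares back into pullbacks. The only cosmetic difference is that you identify $\colim\tilde\alpha=(\colim\alpha,1)$ by uniqueness of the retraction splitting a restriction idempotent, where the paper argues directly that the second component of $\colim\tilde\alpha$ is invertible.
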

	\begin{proof}
		Applying the inclusion $\imath\colon \cat{C}\to\Par(\cat{C})$ gives the following commutative diagram for each $I\in\cat{I}$:
			\begin{center}\begin{tikzcd}
				HI \arrow{r}{(1,p_I)} \arrow[swap]{d}{(1,\alpha_I)} & \colim H \arrow{d}{(1,\colim\alpha)} \\
				KI \arrow[swap]{r}{(1,q_I)} & \colim K
			\end{tikzcd}\end{center}
		Observe that there is a natural transformation $\beta\colon \imath K \To \imath H$ whose components 
		are given by $\beta_I= (\alpha_I,1)$; simply apply Lemma \ref{MCatLem0} to our assumption that $\alpha_I$ is a pullback of 
		$\alpha_J$ along $Kf$.
		
		Now the fact that the inclusion preserves the colimits $(\colim H,p_I)_{i\in\cat{I}}$ and
		$(\colim K,q_I)_{i\in\cat{I}}$ implies the existence of a unique map $\colim\beta=(n,g) \colon \colim K \to \colim H$ making 
		the following diagram commute for each $I\in\cat{I}$:
			\begin{center}\begin{tikzcd}
				KI \arrow{r}{(1,q_I)} \arrow[swap]{d}{(\alpha_I, 1)} & \colim K \arrow{d}{(n,g)} \\
				HI \arrow{r}{(1,p_I)} \arrow[swap]{d}{(1,\alpha_I)} & \colim H \arrow{d}{(1,\colim\alpha)} \\
				KI \arrow[swap]{r}{(1,q_I)} & \colim K
			\end{tikzcd}\end{center}
		Observe that the left composite $(1,\alpha_I)\circ(\alpha_I,1)=(\alpha_I,\alpha_I)$ is the component at $I$ of a 
		natural transformation $\epsilon\colon \imath K \To \imath K$ whose components are restriction idempotents. Therefore, 
		by Lemma~\ref{MCatLem1}, the composite on the right $(1,\colim\alpha)\circ(n,g)=(n,(\colim\alpha) g)$ must be a 
		restriction idempotent, and so $n=(\colim\alpha) g$.
		
		On the other hand, the composite $(\alpha_I,1)\circ(1,\alpha_I) = (1,1)$ is the component of the identity 
		natural transformation $\gamma\colon \imath H\To \imath H$ at $I$, and so $\colim\gamma\colon \colim H \to \colim H$ 
		must be $(1,1)$. However, as the following diagram also commutes, we must have $(n,g)\circ(1,\colim\alpha)=(1,1)$ 
		by uniqueness:
			\begin{center}\begin{tikzcd}
				HI \arrow{r}{(1,p_I)} \arrow[swap]{d}{(1,\alpha_I)} & \colim H \arrow{d}{(1,\colim\alpha)} \\
				KI \arrow{r}{(1,q_I)} \arrow[swap]{d}{(\alpha_I, 1)} & \colim K \arrow{d}{(n,g)} \\
				HI \arrow[swap]{r}{(1,p_I)} & \colim H
			\end{tikzcd}\end{center}
		So we have that $(1,\colim\alpha)\circ(n,g)=(n,n)$ is a splitting of the restriction idempotent $(n,n)$, which means that
		$(1,\colim\alpha)$ is a restriction monic. Therefore $\colim\alpha \in\M_{\cat{C}}$, proving the first part of the lemma.
		
		Regarding the second part of the lemma, observe that $(n,g)\circ(1,\colim\alpha)=(1,1)$ implies $g$ is an isomorphism
		(as $n=(\colim\alpha)$). Therefore, $(n,g)=(\colim\alpha,1)$ and so the following diagram commutes for all $I\in\cat{I}$:
			\begin{center}\begin{tikzcd}
				KI \arrow{r}{(1,q_I)} \arrow[swap]{d}{(\alpha_I,1)} & \colim K \arrow{d}{(\colim\alpha,1)} \\
				HI \arrow[swap]{r}{(1,p_I)} & \colim H
			\end{tikzcd}\end{center}
		The result then follows by applying Lemma \ref{MCatLem0}.
	\end{proof}
	
	\begin{Lemma}\label{MCatLem3}
		Let $\cat{C}$ be a cocomplete $\M$-category, $H,K \colon \cat{I}\to\cat{C}$ functors (with $\cat{I}$ small), and 
		$\alpha\colon H\To K$ a natural transformation such that each $\alpha_I \in\M_{\cat{C}}$ and all naturality squares
		are pullbacks (as in the previous lemma). Let $n\in\M_{\cat{C}}$, and suppose $x\colon \colim H \to X$ and 
		$y\colon\colim K \to Y$ make the right square commute and the outer square a pullback (for all $I\in\cat{I}$):
			\begin{center}\begin{tikzcd}
				HI \arrow{r}{p_I} \arrow[swap]{d}{\alpha_I} & \colim H \arrow{r}{x} \arrow{d}{\colim\alpha} & X \arrow{d}{n} \\
				KI \arrow[swap]{r}{q_I} & \colim K \arrow[swap]{r}{y} & Y
			\end{tikzcd}\end{center}
		Then the right square is also a pullback.
	\end{Lemma}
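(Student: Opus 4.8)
The plan is to transport everything into the partial map category $\Par(\cat{C})$ and to exploit the defining property of a cocomplete $\M$-category, namely that the inclusion $\imath\colon\cat{C}\hookrightarrow\Par(\cat{C})$ preserves colimits; this is the same strategy used in the proof of Lemma~\ref{MCatLem2}. First I would record what that earlier lemma already gives us: $\colim\alpha\in\M_{\cat{C}}$, and the left-hand square (with sides $p_I$, $\alpha_I$, $\colim\alpha$, $q_I$) is a pullback. In particular every square and rectangle occurring in the statement has $\M_{\cat{C}}$-maps down both of its vertical sides, so Lemma~\ref{MCatLem0} is applicable to each of them.

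Next I would rewrite the three pullback conditions as equations in $\Par(\cat{C})$ via Lemma~\ref{MCatLem0}. The left square being a pullback becomes $(1,p_I)\circ(\alpha_I,1)=(\colim\alpha,1)\circ(1,q_I)$; the outer rectangle being a pullback (the hypothesis) becomes $(1,xp_I)\circ(\alpha_I,1)=(n,1)\circ(1,yq_I)$; and the right square being a pullback (the goal) becomes the single equation
$$ (1,x)\circ(\colim\alpha,1) \;=\; (n,1)\circ(1,y) $$
of maps $\colim K\to X$ in $\Par(\cat{C})$.

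The main step is then to verify this last equation. Since $\imath$ preserves the colimit cocone $(q_I\colon KI\to\colim K)_I$, the family $\bigl((1,q_I)\colon KI\to\colim K\bigr)_I$ is jointly epimorphic in $\Par(\cat{C})$, so it is enough to check the equation after precomposing with each $(1,q_I)$. Using the left-square equation and the functoriality of $\imath$ we compute
$$ (1,x)\circ(\colim\alpha,1)\circ(1,q_I) \;=\; (1,x)\circ(1,p_I)\circ(\alpha_I,1) \;=\; (1,xp_I)\circ(\alpha_I,1), $$
while $(n,1)\circ(1,y)\circ(1,q_I)=(n,1)\circ(1,yq_I)$; these two coincide by the outer-rectangle equation. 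Hence the desired equation holds, and Lemma~\ref{MCatLem0} then tells us the right square is a pullback.

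I do not anticipate a real obstacle here. The only points requiring care are that Lemma~\ref{MCatLem0} genuinely applies to all three squares (which uses $\colim\alpha\in\M_{\cat{C}}$ from Lemma~\ref{MCatLem2}) and that $\imath$ preserves the colimits of $H$ and $K$ (which is the definition of a cocomplete $\M$-category). Note that this argument purposely avoids appealing to stability of colimits under pullback along $\M_{\cat{C}}$-maps, since that statement (Corollary~\ref{MCatLem3Cor}) will in turn be derived from the present lemma.
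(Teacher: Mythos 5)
Your proposal is correct and follows essentially the same route as the paper: both reduce the claim, via Lemma~\ref{MCatLem0}, to the single equation $(\colim\alpha,x)=(n,1)\circ(1,y)$ in $\Par(\cat{C})$ and verify it by precomposing with the jointly epimorphic family $\bigl((1,q_I)\bigr)_I$, using the left-square pullback from Lemma~\ref{MCatLem2} and the outer-rectangle hypothesis. Your explicit remark that the argument does not circularly invoke Corollary~\ref{MCatLem3Cor} is a correct and worthwhile observation.
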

	\begin{proof}
		By Lemma \ref{MCatLem0}, to show that the right square is a pullback is the same as showing 
		$(1,x)\circ(\colim\alpha,1)=(\colim\alpha,x) = (n,1) \circ(1,y)$ in $\Par(\cat{C})$. In other words, that the top-right square 
		of the following diagram commutes:
			\begin{center}\begin{tikzcd}
				KI \arrow{r}{(1,q_I)} \arrow[swap]{d}{(\alpha_I,1)} & \colim K \arrow{r}{(1,y)} \arrow{d}{(\colim\alpha,1)} 
						& Y \arrow{d}{(n,1)} \\
				HI \arrow{r}{(1,p_I)} \arrow[swap]{d}{(1,\alpha_I)} & \colim H \arrow{r}{(1,x)} \arrow{d}{(1,\colim\alpha)} 
						& X \arrow{d}{(1,n)} \\
				KI \arrow[swap]{r}{(1,q_I)} & \colim K \arrow[swap]{r}{(1,y)} & Y
			\end{tikzcd}\end{center}
		Since $(\colim\alpha,x)$ and $(n,1)(1,y)$ are both maps out of $\colim K$, it is enough to show that
			$$ (\colim\alpha,x)(1,q_I)=(n,1)(1,y)(1,q_I) $$
		for all $I\in\cat{I}$. But the left-hand side is equal to $(\alpha_I,xp_I)$ by commutativity of the top-left square, and the 
		right-hand side is also $(\alpha_I,xp_I)$ by assumption. Hence the result follows.
	\end{proof}
	
	\begin{Cor}\label{MCatLem3Cor}
		If $(\cat{C},\M_{\cat{C}})$ is a cocomplete $\M$-category, then colimits in $\cat{C}$ are stable under pullback along 
		$\M_{\cat{C}}$-maps.
	\end{Cor}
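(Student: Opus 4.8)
The plan is to obtain this directly from Lemmas~\ref{MCatLem2} and~\ref{MCatLem3}, applying the latter in the degenerate case where the lower horizontal map is an identity. Fix a small category $\cat{I}$, a diagram $K\colon\cat{I}\to\cat{C}$ with colimiting cocone $q_I\colon KI\to\colim K$, and a map $n\colon X\to\colim K$ in $\M_{\cat{C}}$. For each $I$, form the pullback of $n$ along $q_I$, which exists and yields an object $HI$ together with $\alpha_I\colon HI\to KI$ in $\M_{\cat{C}}$ and $r_I\colon HI\to X$. The universal property of these pullbacks makes $I\mapsto HI$ into a functor $H\colon\cat{I}\to\cat{C}$ for which the $\alpha_I$ form a natural transformation $\alpha\colon H\To K$ and the $r_I$ form a cocone on $H$ with vertex $X$; the assertion to prove is exactly that $(X,(r_I))$ is a colimiting cocone for $H$.

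First I would verify that every naturality square of $\alpha$ is a pullback. This is the pasting lemma: for $f\colon I\to J$ the square with edges $Hf$, $\alpha_I$, $\alpha_J$, $Kf$ pastes against the defining pullback of $HJ$ to yield the defining pullback of $HI$, since $KI\times_{KJ}(KJ\times_{\colim K}X)\cong KI\times_{\colim K}X$, so it is itself a pullback. Thus $H$, $K$, $\alpha$, $n$ satisfy the hypotheses of Lemmas~\ref{MCatLem2} and~\ref{MCatLem3}.

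Since $\cat{C}$ is cocomplete, form $\colim H$ with coprojections $p_I\colon HI\to\colim H$. By Lemma~\ref{MCatLem2}, $\colim\alpha\colon\colim H\to\colim K$ lies in $\M_{\cat{C}}$ and each square with edges $p_I$, $\alpha_I$, $\colim\alpha$, $q_I$ is a pullback. Let $x\colon\colim H\to X$ be the unique map with $x p_I=r_I$ induced by the cocone $(r_I)$; then $n x p_I=n r_I=q_I\alpha_I=(\colim\alpha)p_I$ for all $I$, so $n x=\colim\alpha$ by uniqueness. Now apply Lemma~\ref{MCatLem3} with $Y\defeq\colim K$, $y\defeq 1_{\colim K}$, and this $x$: the right-hand square (with maps $x$, $\colim\alpha$, $n$, $1_{\colim K}$) commutes because $n x=\colim\alpha$, and for each $I$ the outer rectangle is precisely the defining pullback of $HI$. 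The lemma therefore shows that this right-hand square is a pullback. But a pullback of $n$ along $1_{\colim K}$ is just $X$ with the pair $(n,1_X)$, so $x$ must be an isomorphism; transporting the colimiting cocone $(p_I)$ along $x$ then exhibits $(X,(r_I))$ as a colimiting cocone for $H$, which is the claim.

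I do not expect a genuine obstacle: Lemmas~\ref{MCatLem2} and~\ref{MCatLem3} carry the load. The only points requiring care are the pasting-lemma verification of the naturality-square hypothesis and the recognition that feeding Lemma~\ref{MCatLem3} the degenerate data $y=1_{\colim K}$ forces the comparison map $x$ to be invertible, since a pullback along an identity is trivial.
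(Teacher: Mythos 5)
Your argument is correct and is essentially the paper's own proof: pull back $n$ along the colimit coprojections to build $H$ and $\alpha$, compare $\colim H$ with $X$ via the induced map, and apply Lemma~\ref{MCatLem3} in the degenerate case $y=1_{\colim K}$ (where the outer rectangles are the defining pullbacks) to conclude the comparison map is invertible. Your explicit pasting-lemma check of the naturality squares and the appeal to Lemma~\ref{MCatLem2} just make precise details the paper leaves implicit.
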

	\begin{proof}
		Let $K\colon \cat{I}\to\cat{C}$ be a functor, $P$ any object in $\cat{C}$, and suppose $\mu\colon P \to \colim K$ is an 
		$\M_{\cat{C}}$-map. Since $\mu\in\M_{\cat{C}}$, for each $I\in\cat{I}$, we may take pullbacks 
		of $\mu$ along the colimiting coprojections of $\colim K$, $(k_I\colon K_I\to \colim K)_{I\in\cat{I}}$, and these
		we call $\alpha_I\colon HI \to KI$. This gives a functor $H\colon \cat{I}\to\cat{C}$, which on objects, takes $I$ to $HI$, 
		and on morphisms, takes $f\colon I\to J$ to the unique map making all squares in the following diagram pullbacks:
			\begin{center}\begin{tikzcd}
				HI \arrow{r}{Hf} \arrow[swap]{d}{\alpha_I} \arrow[bend left=30]{rr}{p_I} & HJ \arrow{r}{p_J} \arrow{d}{\alpha_J}
						& P \arrow{d}{\mu} \\
				KI \arrow{r}{Kf} \arrow[swap,bend right=20]{rr}{k_I} & KJ \arrow{r}{k_J} & \colim K
			\end{tikzcd}\end{center}
		By construction, $(P,p_I)_{I\in\cat{I}}$ is a cocone in $\cat{C}$ and $\alpha\colon H\to K$ is a natural transformation. 
		Now let $(h_I\colon HI\to\colim H)_{I\in\cat{I}}$ be the colimiting coprojections of $\colim H$. Then by the
		universal property of $\colim H$, there exists a unique $\gamma\colon\colim H\to P$ such that $p_I=\gamma h_I$ for all
		$I\in\cat{I}$, and by the universal property of $\colim K$, there is a $\colim \alpha\colon \colim H\to\colim K$ making
		the left square of the following diagram commute (for all $I\in\cat{I}$):
			\begin{center}\begin{tikzcd}
				HI \arrow{r}{h_I} \arrow[swap]{d}{\alpha_I} \arrow[bend left=30]{rr}{p_I} 
						& \colim H \arrow{r}{\gamma} \arrow{d}{\colim\alpha}
						& P \arrow{d}{\mu} \\
				KI \arrow{r} \arrow[swap,bend right=20]{rr}{k_I} & \colim K \arrow[-, double equal sign distance]{r} & \colim K
			\end{tikzcd}\end{center}
		It is easy to see that the right square commutes, and since the left square is a pullback for every $I\in\cat{I}$, the right
		square must be a pullback by Lemma \ref{MCatLem3}. Therefore, because the pullback of the identity $1_{\colim K}$ 
		is the identity, $P\cong \colim H$, and hence colimits are preserved by pullbacks along $\M_{\cat{C}}$-maps.
	\end{proof}
	
	We now show that for any small $\M$-category $\cat{C}$, the Yoneda embedding 
	$\yon\colon \cat{C}\to\PSh_{\M}(\cat{C})$ exhibits the $\M$-category of presheaves $\PSh_{\M}(\cat{C})$ as the free 
	cocompletion of $\cat{C}$.
	
	\begin{Thm}{(Free cocompletion of $\M$-categories)} \label{MCatCocomp}
		For any small $\M$-category $\cat{C}$ and cocomplete $\M$-category $\cat{D}$, the following is an equivalence of categories:
			\begin{equation}\label{FreeMCocomp}
				(-)\circ\yon\colon \M\cat{Cocomp}(\PSh_{\M}(\cat{C}),\cat{D}) \to \M\Cat(\cat{C},\cat{D}).
			\end{equation}
	\end{Thm}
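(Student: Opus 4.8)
The plan is to follow the standard playbook for free cocompletion theorems, adapted to the $\M$-categorical setting. First I would establish that the ordinary presheaf category $\PSh(\cat{C})$ is the free cocompletion of $\cat{C}$ as a plain category, so that every $\M$-functor $F\colon\cat{C}\to\cat{D}$ extends, uniquely up to isomorphism, to a cocontinuous functor $\Lan_{\yon}F\colon\PSh(\cat{C})\to\cat{D}$ (the left Kan extension along $\yon$), computed as the colimit $\Lan_{\yon}F(P)=\colim(\yon{\downarrow}P\to\cat{C}\xrightarrow{F}\cat{D})$. The functor $(-)\circ\yon$ of \eqref{FreeMCocomp} is then essentially surjective and fully faithful \emph{at the level of underlying categories}; the work is to check that everything respects the $\M$-structure. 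So the proof splits into two halves: (i) $\Lan_{\yon}F$ is an $\M$-functor and is cocontinuous as an $\M$-functor, i.e.\ it lands in $\M\cat{Cocomp}(\PSh_{\M}(\cat{C}),\cat{D})$; and (ii) the $2$-cells match up, i.e.\ a natural transformation between cocontinuous $\M$-functors out of $\PSh_{\M}(\cat{C})$ is $\M$-cartesian iff its restriction along $\yon$ is, and conversely every $\M$-cartesian transformation $\alpha\colon F\To G\colon\cat{C}\to\cat{D}$ extends to an $\M$-cartesian transformation $\Lan_{\yon}F\To\Lan_{\yon}G$.

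For half (i), the key point is that every presheaf $P$ is a colimit of representables, and every $\M_{\PSh(\cat{C})}$-map $\mu\colon P'\To P$ arises, by the defining pullback property of $\M_{\PSh(\cat{C})}$, as a colimit of a diagram of the maps $\yon m$ ($m\in\M_{\cat{C}}$) over the category of elements of $P$ — precisely the situation of Lemma~\ref{MCatLem2}, with $H,K$ the relevant diagrams and $\alpha$ the transformation whose components are the $\yon m$'s, all naturality squares being pullbacks by the $\M_{\PSh(\cat{C})}$ condition. Since $\cat{D}$ is a cocomplete $\M$-category, Lemma~\ref{MCatLem2} applied to $F$ composed with those diagrams shows $\colim\alpha=\Lan_{\yon}F(\mu)$ lies in $\M_{\cat{D}}$ and that the requisite squares are pullbacks, i.e.\ $F$ preserves them; combined with Corollary~\ref{MCatLem3Cor} (colimits in $\cat{D}$ are stable under pullback along $\M_{\cat{D}}$-maps) this gives that $\Lan_{\yon}F$ is an $\M$-functor preserving pullbacks of $\M$-maps. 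Cocontinuity of the underlying functor is standard. For uniqueness, any cocontinuous $\M$-functor $G$ with $G\yon\cong F$ agrees with $\Lan_{\yon}F$ on representables, hence (being cocontinuous) on all presheaves, and the isomorphism is $\M$-cartesian because on representables it is $\M$-cartesian and both functors send colimiting cocones of the relevant shape to ones along which the comparison is computed — again an application of Lemma~\ref{MCatLem2}/Lemma~\ref{MCatLem3}.

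For half (ii), fully faithfulness of $(-)\circ\yon$ reduces to the density of $\yon$: a natural transformation between cocontinuous functors out of $\PSh_{\M}(\cat{C})$ is determined by its components at representables, and conversely any family of components at representables natural in $\cat{C}$ extends; so $\M\cat{Cocomp}(\PSh_{\M}(\cat{C}),\cat{D})(\Lan_{\yon}F,\Lan_{\yon}G)\cong\M\Cat(\cat{C},\cat{D})(F,G)$ as sets, and it remains only to see this bijection respects $\M$-cartesianness. One direction is immediate (restrict the pullback squares along $\yon$). For the other, given $\alpha$ $\M$-cartesian and an $\M_{\PSh(\cat{C})}$-map $\mu\colon P'\To P$, write $\mu$ as the colimit of $\yon m$'s as above; the naturality square for $\Lan_{\yon}\alpha$ at $\mu$ is then a colimit of the naturality squares at the $\yon m$'s, each of which is a pullback by hypothesis on $\alpha$, and Lemma~\ref{MCatLem3} (together with stability of colimits under pullback along $\M_{\cat{D}}$-maps, Corollary~\ref{MCatLem3Cor}) upgrades ``colimit of pullbacks'' to ``pullback''.

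The main obstacle I expect is bookkeeping in half (i): making precise that an arbitrary $\M_{\PSh(\cat{C})}$-map really is exhibited as a $\colim\alpha$ of the shape demanded by Lemma~\ref{MCatLem2}, with all naturality squares genuinely pullbacks, and that $\Lan_{\yon}F$ applied to it is the corresponding colimit in $\cat{D}$. This is where the defining pullback condition on $\M_{\PSh(\cat{C})}$-maps has to be used carefully, indexing over the category of elements and checking the transition maps assemble into the functors $H,K$ with $\alpha$ a pullback-preserving transformation. Once that reduction is in place, Lemmas~\ref{MCatLem1}--\ref{MCatLem3} and Corollary~\ref{MCatLem3Cor} do all the remaining work, and the theorem follows from the plain-category free cocompletion result plus these $\M$-structure checks.
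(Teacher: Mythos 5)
Your proposal is correct and follows essentially the same route as the paper's proof: reduce to the ordinary free cocompletion of $\cat{C}$, exhibit each $\M_{\PSh(\cat{C})}$-map as a colimit of maps $\yon m$ with pullback naturality squares, and then let Lemma~\ref{MCatLem2}, Lemma~\ref{MCatLem3} and Corollary~\ref{MCatLem3Cor} handle both the $\M$-functoriality of the extension and the transfer of $\M$-cartesianness for full faithfulness. The bookkeeping you flag (assembling the $m_I$ into a functor with pullback naturality squares and using stability of colimits under pullback in $\PSh(\cat{C})$ to see the induced cocone on $P$ is colimiting) is exactly what the paper carries out.
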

	\begin{proof}
		We know that $(-)\circ\yon\colon \cat{Cocomp}(\PSh(\cat{C}),\cat{D}) \to \Cat(\cat{C},\cat{D})$ is an equivalence of 
		categories; that is, given a functor $F\colon\cat{C}\to\cat{D}$, there is a cocontinuous $G\colon \PSh(\cat{C})\to\cat{D}$ 
		such that $G\yon \cong F$. So \eqref{FreeMCocomp} will be essentially surjective on objects if this same $G$ is an 
		$\M$-functor.
		
		To see that $G$ takes monics in $\M_{\PSh(\cat{C})}$ to monics in $\M_{\cat{D}}$, let $\mu\colon P\To Q$ be an 
		$\M_{\PSh(\cat{C})}$-map. Since every presheaf is a colimit of representables, write $Q \cong \colim\yon D$, where 
		$D\colon\cat{I}\to\cat{C}$ is a functor (with $\cat{I}$ small). By definition of $\mu\in\M_{\PSh(\cat{C})}$, for every 
		$I\in\cat{I}$, there is a map $m_I\colon C_I\to D_I$ making the following a pullback:
			\begin{center}\begin{tikzcd}
				\yon C_I \arrow{r}{p_I} \arrow[swap]{d}{\yon m_I} & P \arrow{d}{\mu} \\
				\yon D_I \arrow[swap]{r}{q_I} & Q
			\end{tikzcd}\end{center}
		(where $q_I$ is a colimit coprojection). It follows there is a functor $C\colon \cat{I}\to\cat{C}$ which on objects takes 
		$I$ to $C_I$ and on morphisms, takes $f\colon I\to J$ to the unique map $Cf$ making the diagram below commute and the left 
		square a pullback:
			\begin{center}\begin{tikzcd}
				\yon C_I \arrow[dashed]{r}{\yon Cf} \arrow[swap]{d}{\yon m_I} \arrow[bend left=30]{rr}{p_I} 
						& \yon C_J \arrow{r}{p_J} \arrow[swap]{d}{\yon m_J} & P \arrow{d}{\mu} \\
				\yon D_I \arrow[swap]{r}{\yon Df} \arrow[swap,bend right=30]{rr}{q_I} & \yon D_J \arrow[swap]{r}{q_J} & Q
			\end{tikzcd}\end{center}
		The fact colimits in $\PSh(\cat{C})$ are stable under pullback implies $(p_I\colon \yon C_I \to P)_{I\in\cat{I}}$ is colimiting. 
		Now applying $G$ to the above diagram gives
			\begin{equation}\label{PbSquare}\begin{tikzcd}
				G\yon C_I \arrow{r}{G\yon Cf} \arrow[swap]{d}{G\yon m_I} \arrow[bend left=30]{rr}{Gp_I} & 
					G\yon C_J \arrow{r}{Gp_J} \arrow[swap]{d}{G\yon m_J} & GP \arrow{d}{G\mu} \\
				G \yon D_I \arrow[swap]{r}{G\yon Df} \arrow[swap,bend right=30]{rr}{Gq_I} & G \yon D_J \arrow[swap]{r}{Gq_J} & GQ
			\end{tikzcd}\end{equation}
		Since $G$ is cocontinuous, both $(Gp_I)_{I\in\cat{I}}$ and $(Gq_I)_{I\in\cat{I}}$ are colimiting. Also, as $G\yon \cong F$
		and $F$ is an $\M$-functor, the left square is a pullback for every pair $I,J \in \cat{I}$. Therefore, by Lemma \ref{MCatLem2}, 
		$G\mu$ must be in $\M_{\cat{D}}$. 
		
		Observe that the same lemma (Lemma \ref{MCatLem2}) says that for every $I\in\cat{I}$, the outer square in 
		\eqref{PbSquare} is a pullback for every $I\in\cat{I}$. In other words, $G$ preserves pullbacks of the form
				\begin{equation}\label{PreservePb}\begin{tikzcd}
					\yon C_I \arrow{r}{p_I} \arrow[swap]{d}{\yon m_I} & P \arrow{d}{\mu} \\
					\yon D_I \arrow[swap]{r}{q_I} & Q
				\end{tikzcd}\end{equation}
		
		Now to see that $G$ preserves $\M_{\PSh(\cat{C})}$-pullbacks, consider the diagram below, where
		the right square is an $\M_{\PSh(\cat{C})}$-pullback and the left square is a pullback for all $I\in\cat{I}$:
			\begin{center}\begin{tikzcd}
				\yon C_I \arrow{r}{p_I} \arrow[swap]{d}{\yon m_I} & P \cong \colim \yon C \arrow{r} \arrow{d}{\mu} & P' \arrow{d}{\mu'} \\
				\yon D_I \arrow[swap]{r}{q_I} & Q \cong \colim \yon D \arrow{r} & Q'
			\end{tikzcd}\end{center}
		The result then follows by applying $G$ to the diagram and using Lemma~\ref{MCatLem3}. This proves \eqref{FreeMCocomp}
		is essentially surjective on objects.
		
		Finally, to show that \eqref{FreeMCocomp} is fully faithful, we need to show for any cocontinuous pair of $\M$-functors 
		$F,F'\colon\PSh_{\M}(\cat{C})\to\cat{D}$ and $\M_{\cat{C}}$-cartesian $\alpha\colon F\yon \To F'\yon$, there exists a unique 
		$\M_{\PSh(\cat{C})}$-cartesian $\tilde{\alpha}\colon F \To F'$ such that $\tilde{\alpha}\yon=\alpha$. In other words,
		the following is an isomorphism of sets:
			$$ (-) \circ \yon \colon \M\Nat(F,F') \to \M\Nat(F\yon, F'\yon) $$
		where $\M\Nat(F,F')$ are the $\M$-cartesian natural tranformations between $F$ and $F'$. However, this condition may be 
		reformulated as follows: 
			\begin{equation}\label{reformulation}\begin{split}
				&\text{For all natural transformations $\tilde{\alpha}\colon F \To F'$, $\tilde{\alpha}$ is $\M_{\PSh(\cat{C})}$-cartesian if } \\
					&\text{$\tilde{\alpha}\yon\colon F\yon \To F'\yon$ is $\M_{\cat{C}}$-cartesian.}\end{split}
			\end{equation} 
		To see that these two statements are equivalent, observe that the second statement amounts to the following diagram
		being a pullback in $\Set$:
			\begin{center}\begin{tikzcd}
				\M\Nat(F,F') \arrow{r} \arrow[hook]{d} & \M\Nat(F\yon,F'\yon) \arrow[hook]{d} \\
				\Nat(F,F') \arrow[swap]{r}{(-) \circ \yon} & \Nat(F\yon,F'\yon)
			\end{tikzcd}\end{center}
		where $\Nat(F,F')$ is the set of natural transformations between $F$ and $F'$. However, as the bottom function 
		is an isomorphism (ordinary free cocompletion), the top must also be an isomorphism and hence the two statements are
		equivalent. Therefore, we show \eqref{FreeMCocomp} is fully faithful by proving \eqref{reformulation}.
		
		So let $\mu\colon P \To Q$ be an $\M_{\PSh(\cat{C})}$-map, and recall that the left square (diagram below) is a pullback for every 
		$I\in\cat{I}$ as $F$ preserves $\M_{\PSh(\cat{C})}$-pullbacks:
		\begin{equation}\label{MPCsquare}\begin{tikzcd}
			F\yon C_I \arrow{r}{Fp_I} \arrow[swap]{d}{F\yon m_I} & FP \arrow{r}{\tilde{\alpha}_P} \arrow{d}{F\mu} & F'P \arrow{d}{F'\mu} \\
			F\yon D_I \arrow[swap]{r}{Fq_I} & FQ \arrow[swap]{r}{\tilde{\alpha}_Q} & F'Q
		\end{tikzcd}\end{equation}
		To show that the right square is a pullback, we will show that the outer square is a pullback for every $I\in\cat{I}$ and apply
		Lemma \ref{MCatLem3}. Now by naturality of $\tilde{\alpha}$, this outer square is the outer square of the following diagram:
		\begin{center}\begin{tikzcd}
			F\yon C_I \arrow{r}{\tilde{\alpha}_{\yon {C_I}}} \arrow[swap]{d}{F \yon m_I} 
				& F'\yon C_I \arrow{r}{F'p_I} \arrow{d}{F' \yon m_I} 
				& F'P \arrow{d}{F' \mu} \\
			F\yon D_I \arrow[swap]{r}{\tilde{\alpha}_{\yon {D_I}}} & F'\yon D_I \arrow[swap]{r}{F'q_I} & F'Q
		\end{tikzcd}\end{center}
		But $\tilde{\alpha} \circ \yon$ being $\M_{\cat{C}}$-cartesian implies the left square is a pullback, and the right square is also 
		a pullback by the fact $F'$ preserves pullbacks of the form \eqref{PreservePb}. Therefore, by Lemma \ref{MCatLem3}, each
		square on the right of \eqref{MPCsquare} is a pullback, and so $\tilde{\alpha}$ is $\M_{\PSh(\cat{C})}$-cartesian. Hence, 
		$(-)\circ\yon\colon \M\cat{Cocomp}(\PSh_{\M}(\cat{C}),\cat{D}) \to \M\Cat(\cat{C},\cat{D})$ is an equivalence of categories.
	\end{proof}

\subsection{Cocompletion of restriction categories}
Earlier, we explored the notion of cocomplete $\M$-category. Now, by the fact $\M\Cat$ and $\cat{rCat}_s$ are $2$-equivalent, it 
makes sense to define a restriction category to be cocomplete in such a way that $\Par(\cat{C})$ will be cocomplete as a restriction 
category if and only if $\cat{C}$ is cocomplete as an $\M$-category. 

\begin{Defn}
	A restriction category $\cat{X}$ is cocomplete if it is split, its subcategory $\Total(\cat{X})$ is cocomplete, and the inclusion
	$\Total(\cat{X}) \hookrightarrow \cat{X}$ preserves colimits. A restriction functor $F\colon \cat{X}\to\cat{Y}$ is cocontinuous if 
	$\Total(F)\colon \Total(\cat{X})\to\Total(\cat{Y})$ is cocontinuous. We denote by $\cat{rCocomp}$ the $2$-category of
	cocomplete restriction categories, cocontinuous restriction functors and restriction transformations.
\end{Defn}

Observe that for any cocomplete restriction category $\cat{X}$, $\M\Total(\cat{X})$ is a cocomplete $\M$-category since 
$\Total(\cat{X})$ is cocomplete and $\Total(\cat{X}) \hookrightarrow \cat{X} \cong \Par(\M\Total(\cat{X}))$ preserves colimits.
We now give examples of cocomplete restriction categories.

\begin{Ex}
	For each class of examples from Example \ref{CocompMCat}, $\Par(\E,\M_{\E})$ is a cocomplete 
	restriction category. In particular, the restriction category of sets and partial functions $\Set_p$ is a cocomplete restriction 
	category since $\Set_p = \Par(\E,\M_{\E})$, where $\E = \Set$ and $\M_{\E}$ are the injective functions. 
	
	Also note that since the $\M$-category $(\cat{Ab},\M_{\cat{Ab}})$ of abelian groups and group monomorphisms is
	not cocomplete as an $\M$-category, $\Par(\cat{Ab},\M_{\cat{Ab}})$ is also not a cocomplete restriction category.
\end{Ex}

We know that for any small $\M$-category $\cat{C}$, $\PSh_{\M}(\cat{C})$ is a cocomplete $\M$-category, and furthermore,
$\Par(\PSh_{\M}(\cat{C}))$ is a cococomplete restriction category. In particular, the split restriction category
$\Par(\PSh_{\M}(\M\Total(\msf{K}_r(\cat{X}))))$ is a cocomplete restriction category for any small restriction category 
$\cat{X}$. We now show that the Cockett and Lack embedding below \cite[p.~252]{MR1871071}
	\begin{equation} \label{CL} 
		\cat{X} \xrightarrow{J} \msf{K}_r(\cat{X}) \xrightarrow{\Phi_{\msf{K}_r(\cat{X})}} \Par(\M\Total(\msf{K}_r(\cat{X})))
			\xrightarrow{\Par(\yon)} \Par(\PSh_{\M}(\M\Total(\msf{K}_r(\cat{X}))))
	\end{equation}
exhibits this split restriction category $\Par(\PSh_{\M}(\M\Total(\msf{K}_r(\cat{X}))))$ as the free restriction cocompletion 
of any small restriction category $\cat{X}$.

\begin{Thm}{(Free cocompletion of restriction categories)}
	For any small restriction category $\cat{X}$ and cocomplete restriction category $\E$, the following is an equivalence
	of categories:
		$$ (-)\circ \eqref{CL} \colon \cat{rCocomp}(\Par(\PSh_{\M}(\M\Total(\msf{K}_r(\cat{X})))),\E) \to 
			\cat{rCat}(\cat{X},\E) $$
	where \eqref{CL} is the Cockett and Lack embedding.
\end{Thm}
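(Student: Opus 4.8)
The plan is to realise the functor $(-)\circ\eqref{CL}$ as (a functor naturally isomorphic to) a composite of four equivalences, each already at our disposal. First, the $2$-equivalence $\Par\colon\M\Cat\to\cat{rCat}_s$ restricts to a $2$-equivalence $\Par\colon\M\cat{Cocomp}\to\cat{rCocomp}$ with pseudo-inverse $\M\Total$: for any $\M$-category $\cat{C}$ one has $\Total(\Par(\cat{C}))\cong\cat{C}$, and under this identification the inclusion $\Total(\Par(\cat{C}))\hookrightarrow\Par(\cat{C})$ becomes $\cat{C}\hookrightarrow\Par(\cat{C})$, so comparing the two notions of cocompleteness shows that $\Par(\cat{C})$ is a cocomplete restriction category precisely when $(\cat{C},\M_{\cat{C}})$ is a cocomplete $\M$-category, and $\Par(F)$ is cocontinuous precisely when $F$ is, while the $2$-cells already correspond under the original $2$-equivalence. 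Since $\E$ is a cocomplete restriction category, $\M\Total(\E)$ is a cocomplete $\M$-category, and restricting the corresponding hom-equivalence (and transporting along the coherence isomorphisms) gives
\[ \cat{rCocomp}\big(\Par(\PSh_{\M}(\cat{C})),\E\big)\ \simeq\ \M\cat{Cocomp}\big(\PSh_{\M}(\cat{C}),\M\Total(\E)\big) \]
for any small $\M$-category $\cat{C}$.

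Now take $\cat{C}=\M\Total(\msf{K}_r(\cat{X}))$, which is small because $\cat{X}$ is (so $\msf{K}_r(\cat{X})$, and hence $\Total(\msf{K}_r(\cat{X}))$, is small). Theorem~\ref{MCatCocomp} then supplies an equivalence $(-)\circ\yon\colon\M\cat{Cocomp}(\PSh_{\M}(\cat{C}),\M\Total(\E))\simeq\M\Cat(\cat{C},\M\Total(\E))$; applying $\Par$ once more and transporting along the unit isomorphisms $\Phi$ gives $\M\Cat(\M\Total(\msf{K}_r(\cat{X})),\M\Total(\E))\simeq\cat{rCat}_s(\msf{K}_r(\cat{X}),\E)$; and since a cocomplete restriction category is in particular split, the biadjunction $\msf{K}_r\dashv(\cat{rCat}_s\hookrightarrow\cat{rCat})$ gives $(-)\circ J\colon\cat{rCat}_s(\msf{K}_r(\cat{X}),\E)\simeq\cat{rCat}(\cat{X},\E)$. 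Composing the four equivalences yields an equivalence $\cat{rCocomp}(\Par(\PSh_{\M}(\M\Total(\msf{K}_r(\cat{X})))),\E)\simeq\cat{rCat}(\cat{X},\E)$.

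The last thing to check is that this composite is naturally isomorphic to $(-)\circ\eqref{CL}$. Chasing a cocontinuous restriction functor $G$ through the four steps, the precomposition factors it acquires are precisely $\Par(\yon)$ (the image under $\Par$ of the Yoneda embedding of Theorem~\ref{MCatCocomp}), then $\Phi_{\msf{K}_r(\cat{X})}$ (the unit of the $\M\Cat$--$\cat{rCat}_s$ equivalence, introduced when $\Par$ is re-applied), then $J$ (the unit of the $\msf{K}_r$ biadjunction); the remaining transports --- postcomposition by $\Phi_{\E}^{-1}$ and precomposition by $\Par(\Psi_{\PSh_{\M}(\cat{C})}^{-1})$ --- recombine with $G$ itself, by pseudonaturality of the unit $\Phi$ together with the triangle identity $\Par(\Psi)\circ\Phi\Par\cong\id$, to give back just $G$. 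Thus $G$ is carried to $G\circ\Par(\yon)\circ\Phi_{\msf{K}_r(\cat{X})}\circ J=G\circ\eqref{CL}$, naturally in $G$, and so $(-)\circ\eqref{CL}$ is an equivalence.

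The main obstacle is precisely this last bookkeeping: one must track the units $\Phi$ and $J$, the counit-type isomorphism $\Psi$ and the triangle identities with enough care to see that reassembling the four abstract equivalences reproduces precomposition with the \emph{concrete} Cockett--Lack embedding \eqref{CL}. Verifying that $\Par$ genuinely restricts to the cocomplete sub-$2$-categories --- on objects, $1$-cells and $2$-cells --- is a second, more routine, point that also has to be spelled out; everything else is a direct combination of Theorem~\ref{MCatCocomp}, the $2$-equivalence $\Par\colon\M\Cat\to\cat{rCat}_s$, and the biadjunction $\msf{K}_r\dashv(\cat{rCat}_s\hookrightarrow\cat{rCat})$.
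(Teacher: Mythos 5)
Your proposal is correct and follows essentially the same route as the paper: both arguments transport the equivalence of Theorem~\ref{MCatCocomp} across the $2$-equivalence $\Par\dashv\M\Total$ (using that this restricts to cocomplete objects), and then finish with the isomorphism $\Phi_{\msf{K}_r(\cat{X})}$ and the unit $J$ of the biadjunction $i\dashv\msf{K}_r$, so that the composite is precomposition with \eqref{CL}. You simply spell out the hom-equivalences and the bookkeeping with the units and triangle identities in more detail than the paper does.
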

\begin{proof}
	First note that $\E \cong \Par(\cat{D})$ for some cocomplete $\M$-category $\cat{D}$ (as $\E$ is split), and that
		$$ \cat{rCocomp}(\Par(\PSh_{\M}(\cat{C})),\Par(\cat{D})) \simeq \M\cat{Cocomp}(\PSh_{\M}(\cat{C}),\cat{D}) $$
	since $\Par$ and $\M\Total$ are $2$-equivalences. Therefore, 
		$$ (-)\circ\Par(\yon)\colon\cat{rCocomp}(\Par(\PSh_{\M}(\cat{C})),\E) \to\cat{rCat}(\Par(\cat{C}),\E) $$
	is an equivalence since
		$$ (-)\circ\yon\colon \M\cat{Cocomp}(\PSh_{\M}(\cat{C}),\cat{D}) \to \M\Cat(\cat{C},\cat{D}) $$
	is an equivalence (free cocompletion of $\M$-categories). Therefore the following composite is an equivalence:
	\begin{center}\begin{tikzcd}
		\cat{rCocomp}(\Par(\PSh_{\M}(\M\Total(\msf{K}_r(\cat{X})))),\E) 
			\arrow{d}{(-) \circ \Par(\yon)} \\
		\cat{rCocomp}(\Par(\M\Total(\msf{K}_r(\cat{X}))),\E) \arrow{d}{(-) \circ \Phi_{\msf{K}_r(\cat{X})} \circ J} \\
		\cat{rCat}(\cat{X},\E)
	\end{tikzcd}\end{center}
	as $\Phi_{\msf{K}_r(\cat{X})}$ is an isomorphism and $J$ is the unit of the biadjunction $i \dashv \msf{K}_r$ at $\cat{X}$.
\end{proof}

\section{Restriction presheaves}\label{sec4}
We have just seen that for any small restriction category $\cat{X}$, the Cockett-Lack embedding \eqref{CL} exhibits the 
restriction category $\Par(\PSh_{\M}(\M\Total(\msf{K}_r(\cat{X}))))$ as a free cocompletion of $\cat{X}$. However, this formulation
of free cocompletion seems rather complex compared to the fact both $\PSh(\cat{C})$ and $\PSh_{\M}(\cat{C})$ were the 
free cocompletions of ordinary categories and $\M$-categories respectively.

In this section, we give an alternate simpler definition in terms of a restriction category of \emph{restriction presheaves} 
$\PSh_r(\cat{X})$. We shall see that $\PSh_r(\cat{X})$ is a full subcategory of $\PSh(\cat{X})$ and 
that the Yoneda embedding factors through a restriction functor $\yon_r\colon \cat{X} \to\PSh_r(\cat{X})$. Finally, we show that the
category of restriction presheaves $\PSh_r(\cat{X})$ is equivalent to $\Par(\PSh_{\M}(\M\Total(\msf{K}_r(\cat{X}))))$, so that
it gives another way of describing free cocompletion in the restriction setting.

\begin{Defn}{(Restriction presheaf)}
	Let $\cat{X}$ be a restriction category. A \emph{restriction presheaf} on $\cat{X}$ is a presheaf
	$P \colon \cat{X}^{\op} \to \Set$ together with assignations
		$$ PA \to \cat{X}(A,A), \quad x \mapsto \bar{x} $$
	where $\bar{x}$ is a restriction idempotent satisfying the following three axioms:
		\begin{enumerate}[label=(A\arabic*),leftmargin=1.5cm]
			\item $x \cdot \bar{x} = x$;
			\item $\ov{x \cdot \bar{f}} = \bar{x} \circ \bar{f}$, where $\bar{f} \colon A \to A$ is a restriction idempotent in $\cat{X}$;
			\item $\bar{x} \circ g = g \circ \ov{x \cdot g}$, where $g \colon B \to A$ in $\cat{X}$.
		\end{enumerate}
	The notation $x \cdot \bar{x}$ denotes the element $P(\bar{x})(x) \in PA$ \cite[p.~25]{MR1300636}. We call the assignations 
	$x \mapsto \bar{x}$ the \emph{restriction structure} on $P$.
\end{Defn}

Unlike the restriction structure on a restriction category, the restriction structure on any restriction presheaf is unique, due to 
the following lemma.

\begin{Lemma}
	Let $\cat{X}$ be a restriction category and $P \colon \cat{X}^{\op} \to \Set$ a presheaf. Suppose $P$ 
	has two restriction structures given by $x \mapsto \bar{x}$ and $x \mapsto \tilde{x}$. Then $\bar{x}=\tilde{x}$ for all
	$A\in\cat{X}$ and $x\in PA$.
\end{Lemma}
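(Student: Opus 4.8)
The plan is to exploit axiom (A2) with a carefully chosen restriction idempotent of $\cat{X}$: namely, since the hypothesis that $x \mapsto \tilde{x}$ is a restriction structure already guarantees that each $\tilde{x}$ is a restriction idempotent of $\cat{X}$, it is a legitimate choice of ``$\bar{f}$'' in axiom (A2) for the \emph{other} structure $x \mapsto \bar{x}$. The same remark applies with the two structures swapped, and this symmetry is what drives the whole argument.

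Concretely, I would first apply (A2) for the bar-structure with $\bar{f} \defeq \tilde{x}$, obtaining $\ov{x \cdot \tilde{x}} = \bar{x} \circ \tilde{x}$; then apply (A1) for the tilde-structure, which gives $x \cdot \tilde{x} = x$, hence $\bar{x} = \ov{x} = \bar{x} \circ \tilde{x}$. (Here one uses that the bar-restriction of an element whose value is $x$ is just $\bar{x}$; strictly, $x\cdot\tilde x$ and $x$ denote the same element of $PA$, so their bar-restrictions agree.) Running the identical computation with the roles of bar and tilde interchanged yields $\tilde{x} = \tilde{x} \circ \bar{x}$. Finally, since $\bar{x}$ and $\tilde{x}$ are restriction idempotents on the same object $A$, axiom (R2) (equivalently, the commutativity clause of the basic lemma on restriction categories) gives $\bar{x} \circ \tilde{x} = \tilde{x} \circ \bar{x}$, and chaining the three equalities produces
\[
	\bar{x} = \bar{x} \circ \tilde{x} = \tilde{x} \circ \bar{x} = \tilde{x},
\]
as required.

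There is no real obstacle here; axiom (A3) is not even needed. The only point worth flagging is the observation that $\tilde{x}$ may legitimately be substituted into axiom (A2) of the bar-structure, which is exactly what makes the two restriction structures ``see'' one another — without that remark the argument does not get off the ground.
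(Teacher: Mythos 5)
Your proof is correct and follows essentially the same route as the paper's: both arguments hinge on substituting the restriction idempotent $\tilde{x}$ into axiom (A2) of the bar-structure (and symmetrically), using (A1) to identify $x\cdot\tilde{x}$ with $x$, and invoking (R2) to commute the two idempotents. The paper compresses this into the single chain $\bar{x} = \ov{x\cdot\tilde{x}} = \bar{x}\circ\tilde{x} = \tilde{x}\circ\bar{x} = \wt{x\cdot\bar{x}} = \tilde{x}$, which is exactly your argument.
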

\begin{proof}
	By the fact $\bar{x}$ and $\tilde{x}$ are restriction idempotents and using (A1),(A2),
		\begin{align*} 
			\bar{x} &= \ov{x\cdot\tilde{x}} = \bar{x} \circ \tilde{x}=\tilde{x} \circ \bar{x}= \wt{x\cdot\bar{x}} = \tilde{x}. \qedhere
		\end{align*}
\end{proof}

We also have the following analogues of basic results of restriction categories.

\begin{Lemma}
	Suppose $P$ is a restriction presheaf on a restriction category $\cat{X}$, and let $A\in\cat{X}$, $x \in PA$ and $g \colon B \to A$. 
	Then
		\begin{enumerate}[label=(\arabic*),leftmargin=1.5cm]
			\item $\bar{g} \circ \ov{x \cdot g} = \ov{x \cdot g}$;
			\item $\ov{\bar{x} \circ g} = \ov{x \cdot g}$.
		\end{enumerate}
\end{Lemma}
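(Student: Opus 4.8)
Both identities are straightforward consequences of the restriction-presheaf axioms (A1)--(A3) together with the basic restriction-category identities (R1)--(R4) and their corollaries from the preliminary lemmas. I would prove them in the stated order, using (1) as a stepping stone towards (2). The guiding principle is that $\ov{x\cdot g}$ should behave exactly like $\ov{gf}$ does in a restriction category, with ``composition with $g$'' replaced by ``acting by $g$''; in particular axiom (A3), $\bar x\circ g = g\circ\ov{x\cdot g}$, is the presheaf analogue of (R4).

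\textbf{Step 1: identity (1), $\bar g\circ\ov{x\cdot g}=\ov{x\cdot g}$.} Start from axiom (A2) applied to the restriction idempotent $\bar g$ (legitimate, since $\ov{x\cdot g}$ will want to be compared against something of the form $\ov{x\cdot \bar f}$). Concretely, I would write $\ov{x\cdot g}$ and massage it: using (A3) we have $\bar x\circ g=g\circ\ov{x\cdot g}$, and since $P$ is a presheaf, $x\cdot(\bar x\circ g)=(x\cdot\bar x)\cdot g=x\cdot g$ by (A1). Hence $x\cdot g = x\cdot(g\circ\ov{x\cdot g})=(x\cdot g)\cdot\ov{x\cdot g}$, which by (A1)-style reasoning (actually directly: $(x\cdot g)\cdot\overline{x\cdot g} = x\cdot g$ is exactly (A1) for the element $x\cdot g\in PB$). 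Now apply the overline: $\ov{x\cdot g}=\ov{(x\cdot g)\cdot\ov{x\cdot g}}$, and then (A2) with the restriction idempotent $\ov{x\cdot g}\colon B\to B$ gives $\ov{x\cdot g}=\ov{x\cdot g}\circ\ov{x\cdot g}$, i.e.\ idempotency — not yet what we want. Instead, the cleaner route: apply (A2) with $f:=g$ is not allowed since $g$ need not be a restriction idempotent, so one first replaces $g$ by $\bar g\circ g=g$ (R1) and then uses (A3)/(A2) carefully. I would compute $\ov{x\cdot g}=\ov{x\cdot(\bar g\circ g)}=\ov{(x\cdot\bar g)\cdot g}$; then by (1) applied to the element $x\cdot\bar g$ — wait, that is circular. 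The honest quick proof: by (A3), $g\circ\ov{x\cdot g}=\bar x\circ g$, so $\bar g\circ\ov{x\cdot g}=\bar g\circ\ov{\ov{x\cdot g}}$ is not obviously it either. I expect the intended one-liner is: $\bar g\circ\ov{x\cdot g}\stackrel{(?)}{=}\ov{(x\cdot g)\cdot \bar g}\stackrel{(?)}{=}\ov{x\cdot(g\circ\bar g)}=\ov{x\cdot g}$, where the first equality is (A2) for the element $x\cdot g\in PB$ with the restriction idempotent $\bar g$, the second is the presheaf functoriality $(x\cdot g)\cdot\bar g = x\cdot(g\circ\bar g)$, wait $\bar g\colon B\to B$ so this is fine, and the last uses $g\circ\bar g=g$ (R1). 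So: $\bar g\circ\ov{x\cdot g} = \ov{(x\cdot g)\cdot\bar g}$ by (A2), $= \ov{x\cdot(g\bar g)}$ by functoriality, $=\ov{x\cdot g}$ by (R1). That is the proof of (1).

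\textbf{Step 2: identity (2), $\ov{\bar x\circ g}=\ov{x\cdot g}$.} Here I would first rewrite $\bar x\circ g$ using (A3): $\bar x\circ g=g\circ\ov{x\cdot g}$. Then $\ov{\bar x\circ g}=\ov{g\circ\ov{x\cdot g}}$, and now apply (R3) — $\ov{g\circ\bar f}=\bar g\circ\bar f$ — with $\bar f:=\ov{x\cdot g}$, giving $\ov{g\circ\ov{x\cdot g}}=\bar g\circ\ov{x\cdot g}$. Finally invoke Step 1 to conclude $\bar g\circ\ov{x\cdot g}=\ov{x\cdot g}$. Chaining: $\ov{\bar x\circ g}=\ov{g\circ\ov{x\cdot g}}=\bar g\circ\ov{x\cdot g}=\ov{x\cdot g}$.

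\textbf{Anticipated obstacle.} The only real care-point is keeping the presheaf action and the restriction-category composition straight — i.e.\ remembering that elements of $PB$ are acted on by morphisms into $B$ contravariantly, so $x\cdot(h\circ k)=(x\cdot h)\cdot k$ in the order that typechecks, and that $\bar x$, $\bar g$, $\ov{x\cdot g}$ all live as endomorphisms in $\cat X$ where ordinary (R1)--(R4) apply. Once the bookkeeping is fixed, both proofs are two- or three-step manipulations with no genuine difficulty; identity (1) must be established before (2), since (2) uses it.
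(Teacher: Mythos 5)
Your proof is correct and is essentially the paper's own argument: part (1) via (A2) applied to the element $x\cdot g$ with the idempotent $\bar g$ plus functoriality and (R1), and part (2) via (A3), (R3) and part (1). The only cosmetic difference is that the paper makes explicit the (R2) step commuting $\bar g$ with $\ov{x\cdot g}$ before invoking (A2), which you leave implicit.
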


\begin{proof}
	\begin{enumerate}
		\item By (R2), (A2) and (R1),
				$$ \bar{g} \circ \ov{x \cdot g} = \ov{x \cdot g} \circ \ov{g} = \ov{(x \cdot g) \cdot \bar{g}}
					= \ov{x \cdot (g \circ \bar{g})} = \ov{x \cdot g}. $$
		\item By (A3), (R3) and the previous result,
				\begin{align*} 
					\ov{\bar{x} \circ g} & = \ov{g \circ \ov{x \cdot g}} = \bar{g} \circ \ov{x \cdot g } = \ov{x \cdot g}. \qedhere 
				\end{align*}
	\end{enumerate}
\end{proof}

\begin{Defn}{(Category of restriction presheaves)}
	Let $\cat{X}$ be a restriction category. The category of restriction presheaves on $\cat{X}$, $\PSh_r(\cat{X})$, has the following data:
	\begin{description}[labelindent=1cm,leftmargin=1cm]
		\item [Objects] Restriction presheaves;
		\item [Morphisms] Arbitrary natural transformations $\alpha\colon P\Rightarrow Q$;
		\item [Restriction] The restriction of $\alpha\colon P \To Q$ is the natural transformation $\bar{\alpha}\colon P \To P$,
					given componentwise by
						$$ \bar{\alpha}_A(x) = x \cdot \ov{\alpha_A(x)} $$
					for every $A \in \cat{X}$ and $x\in PA$.
	\end{description}
\end{Defn}
Note that $\bar{\alpha}$ is natural since
				$$ \bar{\alpha}_B(x \cdot f) = x \cdot \Big( f \circ \ov{\alpha_B(x \cdot f)} \Big) 
						= x \cdot \Big( f \circ \ov{\alpha_A(x) \cdot f} \Big) = x \cdot \Big( \ov{\alpha_A(x)} \circ f \Big) 
						= \bar{\alpha}_A(x) \cdot f $$
for all $f \colon B \to A$. The restriction axioms are easy to check.

Observe that $\PSh_r(\cat{X})$ is a full subcategory of $\PSh(\cat{X})$, as the restriction structure on any presheaf,
if it exists, must be unique. Also, if $\cat{X}$ is a restriction category, then each representable $\cat{X}(-,A)$ has a restriction 
structure given by sending $f \in\cat{X}(B,A)$ to $\bar{f} \in\cat{X}$. In particular, this implies that the Yoneda embedding 
$\yon \colon \cat{X} \to \PSh(\cat{X})$ factors as a unique functor $\yon_r \colon \cat{X} \to \PSh_r(\cat{X})$.
	\begin{center}\begin{tikzcd}
		\cat{X} \arrow[swap]{dr}{\yon} \arrow{r}{\yon_r} & \PSh_r(\cat{X}) \arrow[hook]{d} \\
		& \PSh(\cat{X})
	\end{tikzcd}\end{center}

\begin{Lemma}
	For any restriction category $\cat{X}$, the functor $\yon_r \colon \cat{X} \to \PSh_r(\cat{X})$ is a restriction functor.
\end{Lemma}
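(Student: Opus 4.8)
The plan is to verify directly that $\yon_r$ preserves restriction, i.e. that $\yon_r(\bar f) = \ov{\yon_r(f)}$ for every map $f\colon A\to B$ in $\cat X$. Since $\yon_r$ is a corestriction of the ordinary Yoneda embedding $\yon$, we have $\yon_r(f) = \cat X(-,f)$, the natural transformation $\cat X(-,A)\To\cat X(-,B)$ given by postcomposition with $f$; likewise $\yon_r(\bar f) = \cat X(-,\bar f)$ is postcomposition with $\bar f\colon A\to A$. So the statement to prove is an equality of natural transformations $\cat X(-,A)\To\cat X(-,A)$, which we check componentwise: for each object $C$ and each $g\in\cat X(C,A)$, we must show
	$$ \ov{\yon_r(f)}_C(g) \;=\; \bar f\circ g. $$

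First I would unwind the left-hand side using the definition of restriction in $\PSh_r(\cat X)$. With $\alpha = \yon_r(f)$, the formula gives $\bar\alpha_C(g) = g\cdot\ov{\alpha_C(g)}$, where $\alpha_C(g) = f\circ g \in\cat X(C,B)$ and the restriction structure on the representable $\cat X(-,B)$ sends $h\in\cat X(C,B)$ to $\bar h\in\cat X(C,C)$; thus $\ov{\alpha_C(g)} = \ov{f\circ g}$, a restriction idempotent on $C$. The dot-action of this idempotent on $g\in\cat X(C,A)$ is precomposition, so $\bar\alpha_C(g) = g\circ\ov{f\circ g}$. Hence the claim reduces to the identity
	$$ g\circ\ov{f\circ g} \;=\; \bar f\circ g, $$
which is precisely axiom (R4) of a restriction category (with the roles: $g\colon C\to A$, $f\colon A\to B$, and $\bar h$ there read as $\bar f$). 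That finishes the computation.

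There is essentially no obstacle here; the only mild care needed is bookkeeping — keeping straight that the restriction of a representable $\cat X(-,A)$ in $\PSh_r(\cat X)$ is $h\mapsto\bar h$ (already noted in the text preceding the lemma), and that the dot-action $x\cdot k$ for a presheaf means $P(k)(x)$, which for a representable is precomposition. Once those are substituted, the identity is exactly (R4). I would also remark that this confirms consistency with the earlier claim that $\yon$ factors through $\PSh_r(\cat X)$: the restriction structure we are comparing against is the one induced on representables, not an auxiliary choice, so the lemma is really the statement that this induced structure is compatible with the restriction on $\cat X$.
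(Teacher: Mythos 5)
Your argument is correct and is essentially identical to the paper's own proof: both unwind $\ov{\yon_r f}_X(x)$ via the definition of restriction in $\PSh_r(\cat{X})$, identify the dot-action on a representable as precomposition, and conclude by axiom (R4). No issues.
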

\begin{proof}
	Let $f\colon A \to B$ be a map in $\cat{X}$. Then for all $X \in \cat{X}$ and $x\in \cat{X}(X,A)$, we have 
		$$ \ov{\yon_r f}_X(x) = x \cdot \ov{(\yon_r f)_X(x)} = x \cdot \ov{f \circ x} = x \circ \ov{f \circ x} 
			= \ov{f} \circ x = (\yon_r \ov{f})_X (x) $$
	and so $\yon_r$ is a restriction functor.
\end{proof}

We can characterise the total maps in $\PSh_r(\cat{X})$ as those which are restriction preserving, due to the following proposition.

\begin{Prop}\label{TotalMaps}
	A map $\alpha \colon P \To Q$ is total in $\PSh_r(\cat{X})$ if and only if 
	$\ov{\alpha_A(x)} = \bar{x}$ for all $A \in \cat{X}$ and $x \in PA$.
\end{Prop}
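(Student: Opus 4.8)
The plan is to unwind the definition of totality and then play the restriction structures on $P$ and $Q$ off against one another. Recall that $\alpha$ is total precisely when $\bar\alpha = \id_P$, which componentwise says $x \cdot \ov{\alpha_A(x)} = \bar\alpha_A(x) = x$ for every $A \in \cat{X}$ and $x \in PA$; so the claim to be proved is that this holds if and only if $\ov{\alpha_A(x)} = \bar x$ for all such $A$ and $x$.

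One direction is immediate: if $\ov{\alpha_A(x)} = \bar x$ then $x \cdot \ov{\alpha_A(x)} = x \cdot \bar x = x$ by (A1), so $\alpha$ is total.

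For the converse, I would assume $\alpha$ total, fix $A \in \cat{X}$ and $x \in PA$, and abbreviate $e := \ov{\alpha_A(x)}$, a restriction idempotent of $\cat{X}$. First I would derive $\bar x = \bar x \circ e$: totality gives $x = x \cdot e$, and applying the restriction operator of $P$ together with axiom (A2) (with the restriction idempotent $e$) yields $\bar x = \ov{x \cdot e} = \bar x \circ e$. Next I would derive $e = e \circ \bar x$ using the restriction presheaf $Q$: naturality of $\alpha$ at the endomorphism $\bar x \colon A \to A$, combined with (A1), gives $\alpha_A(x) = \alpha_A(x \cdot \bar x) = \alpha_A(x) \cdot \bar x$, and applying the restriction operator of $Q$ together with (A2) gives $e = \ov{\alpha_A(x) \cdot \bar x} = e \circ \bar x$. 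Finally, $\bar x$ and $e$ are restriction idempotents, hence commute by (R2), so $\bar x = \bar x \circ e = e \circ \bar x = e$, which is the desired equality $\ov{\alpha_A(x)} = \bar x$.

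The one genuine idea in the argument is to feed the endomorphism $\bar x$ of $A$ into naturality of $\alpha$; this is what transports the restriction datum of $x$ in $P$ over to that of $\alpha_A(x)$ in $Q$. Everything else is a routine manipulation of the axioms (A1), (A2) and of the commutativity of restriction idempotents, so I do not anticipate any real obstacle.
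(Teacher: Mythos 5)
Your proof is correct and follows essentially the same route as the paper: both directions reduce to the two identities $\bar{x} = \bar{x}\circ\ov{\alpha_A(x)}$ (from totality via (A2)) and $\ov{\alpha_A(x)} = \ov{\alpha_A(x)}\circ\bar{x}$ (from naturality at $\bar{x}$ together with (A1) and (A2)), combined by commutativity of restriction idempotents; the paper phrases this via antisymmetry of the partial order $\le$, which is the same computation. The only cosmetic difference is that you spell out the easy ``if'' direction, which the paper leaves implicit.
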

\begin{proof}	
	Suppose $\alpha \colon P \To Q$ is total in $\PSh_r(\cat{X})$. Then $\bar{\alpha}_A(x) = 1_{PA}(x) = x$, or 
	$x \cdot \ov{\alpha_A(x)} = x$. But this implies $\bar{x} \le \ov{\alpha_A(x)}$ since
		$$ \bar{x} = \ov{x \cdot \ov{\alpha_A(x)}} = \bar{x} \circ \ov{\alpha_A(x)} = \ov{\alpha_A(x)} \circ \bar{x}$$
	On the other hand, $\ov{\alpha_A(x)} \le \bar{x}$ as
		$$ \ov{\alpha_A(x)} = \ov{\alpha_A(x \cdot \ov{x})} = \ov{\alpha_A(x) \cdot \bar{x}} = \ov{\alpha_A(x)} \circ \bar{x}
				= \bar{x} \circ \ov{\alpha_A(x)} $$
	Therefore, $\alpha$ in $\PSh_r(\cat{X})$ is total if and only if $\alpha$ preserves restrictions.	
\end{proof}

The restriction presheaf category has one more important property.

\begin{Prop}\label{SplitRCat}
	Let $\cat{X}$ be a restriction category. Then $\PSh_r(\cat{X})$ is a split restriction category.
\end{Prop}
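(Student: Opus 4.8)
The plan is to verify that $\PSh_r(\cat{X})$ satisfies the restriction axioms (R1)--(R4) and then show that every restriction idempotent splits. Since $\PSh_r(\cat{X})$ is a full subcategory of $\PSh(\cat{X})$ with the restriction structure defined componentwise by $\bar{\alpha}_A(x) = x \cdot \ov{\alpha_A(x)}$, the bulk of the work is a componentwise check reducing everything to the known restriction identities in $\cat{X}$. The naturality of $\bar\alpha$ has already been established in the text, so what remains for the restriction-category part is (R1)--(R4), each of which I would unwind componentwise: for instance (R1) asks $\alpha_A(\bar\alpha_A(x)) = \alpha_A(x)$, which follows since $\alpha_A(x \cdot \ov{\alpha_A(x)}) = \alpha_A(x) \cdot \ov{\alpha_A(x)} = \alpha_A(x)$ by naturality of $\alpha$ and axiom (R1) in $\cat{X}$; the others are similar diagram-chases using (A1)--(A3) together with the basic restriction lemmas recalled earlier. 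These are routine, so I would state them briefly and move on.

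The substantive step is to exhibit a splitting for an arbitrary restriction idempotent $\bar\alpha \colon P \To P$. I would first identify which presheaves arise as such splittings: given a restriction presheaf $P$ and a restriction idempotent $e = \bar e \colon P \To P$ in $\PSh_r(\cat{X})$, define a subpresheaf $P_e \subseteq P$ by $P_e A = \set{x \in PA}{x \cdot \ov{e_A(x)} = x}$, equivalently (by Proposition \ref{TotalMaps}-style reasoning) those $x$ with $\ov{e_A(x)} = \bar x$ --- i.e. the ``fixed points'' of $e$. One checks $P_e$ is closed under the presheaf action: if $x \in P_e A$ and $g \colon B \to A$, then using (A3) and the restriction identity $\ov{e_A(x)}\circ g = g \circ \ov{e_A(x)\cdot g}$, together with naturality of $e$, one shows $x \cdot g \in P_e B$. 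The restriction structure on $P_e$ is inherited from $P$, and one verifies the axioms (A1)--(A3) still hold --- here one uses that $\bar x$ for $x \in P_e A$ can be rewritten via $\ov{e_A(x)}$, and (A2) for $P$.

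With $P_e$ in hand, let $m \colon P_e \To P$ be the inclusion and define $r \colon P \To P_e$ componentwise by $r_A(x) = x \cdot \ov{e_A(x)} = e_A(x)\cdot\text{(something)}$ --- more precisely I expect $r_A(x) = e_A(x)$ works once one checks $e_A(x) \in P_e A$ (which holds because $e$ is a restriction idempotent, so $\ov{e_A(e_A(x))} = \ov{e_A(x)}$ after a short computation), or alternatively $r_A(x) = x \cdot \ov{e_A(x)}$. One then verifies naturality of $r$, that $r$ is a morphism of restriction presheaves, and finally the two splitting equations: $m r = e$ and $r m = 1_{P_e}$. The equation $rm = 1$ reduces to the defining property of $P_e$, and $mr = e$ is essentially axiom (A1) together with the definition of $\bar e$. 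I would present these as a sequence of short componentwise identities.

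\textbf{The main obstacle} I anticipate is pinning down the correct definition of the splitting object $P_e$ and the retraction $r$ so that $P_e$ is genuinely a restriction \emph{sub}presheaf (the restriction structure must restrict, which requires (A2)--(A3) to be checked on the nose using that $\bar x$ agrees with $\ov{e_A(x)}$ on $P_e$), and so that both splitting equations hold simultaneously --- there is a real risk of defining $r$ by a formula that satisfies $rm = 1$ but not $mr = e$, or vice versa, so the two must be reconciled by choosing $r_A(x)$ to be the canonical representative $e_A(x)$ (or $x\cdot\ov{e_A(x)}$) and verifying carefully that this lands in $P_e$ and is restriction-preserving. Everything else is bookkeeping with the restriction identities (R1)--(R4) and (A1)--(A3) already available in the excerpt.
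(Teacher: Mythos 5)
Your proposal is correct and follows essentially the same route as the paper: the splitting object is exactly the fixed-point subpresheaf $QA=\{x\in PA\mid \bar\alpha_A(x)=x\}$ with the restriction structure inherited from $P$, the paper merely shortcutting your explicit construction of $m$ and $r$ by citing that all idempotents split in $\PSh(\cat{X})$ and taking $\mu$ to be the inclusion. The verification of (R1)--(R4) that you include is dealt with in the paper just before the proposition (``The restriction axioms are easy to check''), so it is not part of the proof proper, but nothing in your argument diverges from theirs.
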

\begin{proof}
	Let $\bar{\alpha} \colon P \To P$ be a restriction idempotent in $\PSh_r(\cat{X})$. Since all idempotents in $\PSh(\cat{X})$
	split, we may write $\bar{\alpha}=\mu\rho$ for some maps $\mu\colon Q \To P$ and $\rho\colon P \To Q$ such that 
	$\rho\mu=1$. Componentwise, we may take $\mu_A$ to be the inclusion $QA \hookrightarrow PA$ and 
	$QA = \{ x\in PA \mid \bar{\alpha}_A(x)=x \}$. 
	Therefore, to show $\PSh_r(\cat{X})$ is split, it is enough to show that $Q$ is a restriction presheaf. However, $P$ is a 
	restriction presheaf and $Q$ is a subfunctor of $P$. Therefore, imposing the restriction structure of $P$ onto $Q$ will make 
	$Q$ a restriction presheaf. Hence $\PSh_r(\cat{X})$ is a split restriction category.
\end{proof}

Before moving onto the main theorems in this section, let us recall the split restriction category $\msf{K}_r(\cat{X})$, whose 
objects are pairs $(A,e)$ (with $e$ a restriction idempotent on $A \in\cat{X}$). Also recall the unit of the biadjunction
$i \dashv \msf{K}_r$ at $\cat{X}$, $J \colon \cat{X} \to \msf{K}_r(\cat{X})$, which sends objects $A$ to $(A,1_A)$ and 
morphisms $f \colon A \to B$ to $f \colon (A,1_A) \to (B,1_B)$.

\begin{Prop}
	$\PSh_r(\cat{X})$ and $\PSh_r(\msf{K}_r(\cat{X}))$ are equivalent as restriction categories.
\end{Prop}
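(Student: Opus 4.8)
The plan is to show that restriction of presheaves along the unit $J\colon\cat{X}\to\msf{K}_r(\cat{X})$ defines a restriction functor
$$ J^* \colon \PSh_r(\msf{K}_r(\cat{X})) \to \PSh_r(\cat{X}) $$
which is an equivalence of restriction categories. First I would check that $J^*$ is well defined: given a restriction presheaf $Q$ on $\msf{K}_r(\cat{X})$, put $J^*Q = Q\circ J^{\op}$, and observe that for $x\in Q(A,1_A)$ the idempotent $\bar{x}$ supplied by the restriction structure of $Q$ is a restriction idempotent on $(A,1_A)$ in $\msf{K}_r(\cat{X})$, hence --- since $\msf{K}_r(\cat{X})((A,1_A),(A,1_A)) = \cat{X}(A,A)$ carrying the same restriction --- a restriction idempotent on $A$ in $\cat{X}$; one takes this as the restriction structure of $J^*Q$. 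Axioms (A1)--(A3) and functoriality transfer verbatim, and the formula $\bar{\alpha}_A(x) = x\cdot\ov{\alpha_A(x)}$ for the restriction in $\PSh_r$ makes it immediate that $J^*$ preserves restriction.

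Next I would show $J^*$ is fully faithful and essentially surjective. For full faithfulness, note that every object $(A,e)$ of $\msf{K}_r(\cat{X})$ is a retract of $JA = (A,1_A)$, the maps $e\colon(A,e)\to(A,1_A)$ and $e\colon(A,1_A)\to(A,e)$ exhibiting the splitting; thus $J$ is a dense fully faithful functor, and restriction along such a functor is an equivalence of ordinary presheaf categories. Since $\PSh_r$ is in both cases a full subcategory of $\PSh$, the functor $J^*$ inherits full faithfulness. For essential surjectivity I would, given a restriction presheaf $P$ on $\cat{X}$, define $\widehat{P}\colon\msf{K}_r(\cat{X})^{\op}\to\Set$ by
$$ \widehat{P}(A,e) = \{\, x\in PA \mid x\cdot e = x \,\}, \qquad \widehat{P}(g)(x) = x\cdot g \quad\text{for } g\colon(B,d)\to(A,e), $$
the action on morphisms being well defined because $egd = g$ and $x\cdot e = x$ force $(x\cdot g)\cdot d = x\cdot g$. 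The crucial point is that for $x\in\widehat{P}(A,e)$, axiom (A2) for $P$ applied to the restriction idempotent $e$ gives $\bar{x} = \ov{x\cdot e} = \bar{x}\circ e$, i.e. $\bar{x}\le e$, so $\bar{x}$ is a restriction idempotent on the object $(A,e)$ in $\msf{K}_r(\cat{X})$; this is taken as the restriction structure of $\widehat{P}$. Verifying (A1)--(A3) for $\widehat{P}$ then reduces to the corresponding axioms for $P$ together with the routine checks that the relevant elements remain inside $\widehat{P}$. Finally $J^*\widehat{P} = P$ on the nose, since $\widehat{P}(A,1_A) = PA$ with matching restriction structure, so $J^*$ is essentially surjective.

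To conclude, $J^*$ is a restriction functor which is an equivalence of underlying categories, and any such functor is automatically an equivalence of restriction categories: choosing a pseudo-inverse $G$, the unit and counit isomorphisms have identity restriction (an isomorphism is monic, so its restriction is $1$) and are therefore restriction transformations; and applying the faithful, restriction-preserving $J^*$ to the two sides of the desired identity $\ov{G\beta} = G\bar{\beta}$, and using (R4) together with the fact that conjugation by an isomorphism commutes with restriction, forces $\ov{G\beta} = G\bar{\beta}$, so $G$ is also a restriction functor. (Both $\PSh_r(\cat{X})$ and $\PSh_r(\msf{K}_r(\cat{X}))$ are split by Proposition \ref{SplitRCat}, but this is not needed for the equivalence itself.)

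I expect the main obstacle to be the construction in the essential-surjectivity step --- identifying $\widehat{P}$, and in particular recognising, via axiom (A2), that $\bar{x}$ is genuinely a restriction idempotent on $(A,e)$ and not merely on $A$. Once that is in place, the remaining verifications (that $\widehat{P}$ is a functor, satisfies (A1)--(A3), and that images, restriction idempotents and composites all stay inside $\widehat{P}$ and inside $\msf{K}_r(\cat{X})$) are bookkeeping rather than genuinely difficult.
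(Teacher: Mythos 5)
Your proposal is correct and follows essentially the same route as the paper: restriction along $J^{\op}$, with essential surjectivity witnessed by the presheaf $(A,e)\mapsto\{x\in PA\mid x\cdot e=x\}$ carrying the restriction structure inherited from $P$. The extra details you supply (the density argument for full faithfulness, the observation via (A2) that $\bar{x}\le e$, and the check that a pseudo-inverse of a restriction equivalence is again a restriction functor) are points the paper leaves implicit, but they do not change the argument.
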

\begin{proof}
	It is well-known that the functor $(-) \circ J^{\op} \colon \PSh(\msf{K}_r(\cat{X})) \to \PSh(\cat{X})$ is an equivalence. Therefore, 
	the result will follow if we can show this functor restricts back to an equivalence between $\PSh_r(\msf{K}_r(\cat{X}))$ and 
	$\PSh_r(\cat{X})$. In other words, showing that the restriction of $(-) \circ J^{\op}$ to $\PSh_r(\msf{K}_r(\cat{X}))$ 
	sends objects to restriction presheaves, is essentially surjective on objects and is a restriction functor.
		\begin{center}\begin{tikzcd}
			\PSh_r(\msf{K}_r(\cat{X})) \arrow{r} \arrow[hook]{d} & \PSh_r(\cat{X}) \arrow[hook]{d} \\
			\PSh(\msf{K}_r(\cat{X})) \arrow{r}{(-) \circ J^{\op}} & \PSh(\cat{X})
		\end{tikzcd}\end{center}
	So let $P$ be a restriction presheaf on $\msf{K}_r(\cat{X})$. Then $PJ^{\op}$ will be a restriction presheaf on $\cat{X}$ if we 
	define the restriction on $x \in (PJ^{\op})(A)=P(A,1_A)$ to be the same as in $P(A,1_A)$ for all $A\in\cat{X}$. Also, if 
	$\bar{\alpha}\colon P\To P$ is a restriction idempotent, then
		$$ (\ov{\alpha} \circ J^{\rm{op}})_A(x) =  \ov{\alpha}_{(A,1_A)} (x) = x \cdot \ov{\alpha_{(A,1_A)}(x)}
				= x \cdot \ov{(\alpha \circ J^{\rm{op}})_A(x)} = \Big(\ov{\alpha \circ J^{\rm{op}} }\Big)_A(x) $$
	implies $(-) \circ J^{\op}$ is a restriction functor. Therefore, all that remains is to show essential surjectivity.
	
	Let $Q$ be a restriction presheaf on $\cat{X}$, and define a presheaf $Q'$ on $\msf{K}_r(\cat{X})$ as follows:
		\begin{description}[labelindent=1cm]
			\item [Objects] $(A,e) \mapsto \{ x\in QA \mid x\cdot e = x\}$;
			\item [Morphisms] $f \colon (A,e) \to (A',e') \mapsto Qf$.
		\end{description}
	A quick check will show that $Q' \circ J^{\op} = Q$. Then to make $Q'$ a restriction presheaf, observe that because
	$x \in Q'(A,e) \subseteq QA$ and $Q$ is a restriction presheaf, there exists a restriction idempotent $\bar{x}$ associated to $x$.
	Therefore, define the restriction structure on $Q'$ to be $x \mapsto \bar{x}$. This then will satisfy the restriction presheaf axioms,
	making $Q'$ a restriction presheaf. Hence, $(-) \circ J^{\op} \colon \PSh(\msf{K}_r(\cat{X})) \to \PSh(\cat{X})$ is
	essentially surjective on objects, and so $\PSh_r(\cat{X})$ and $\PSh_r(\msf{K}_r(\cat{X}))$ are equivalent.
\end{proof}

Before proving that $\Par(\PSh_{\M}(\M\Total(\msf{K}_r(\cat{X}))))$ and $\PSh_r(\cat{X})$ are, in fact, equivalent as 
restriction categories, we give the following lemma.

\begin{Lemma}\label{PbLemma}
	Let $\cat{C}$ be a category and let $m$ be a monic in $\cat{C}$. Suppose the following is a pullback:
		\begin{center}\begin{tikzcd}
			D \arrow{r}{g} \arrow[swap]{d}{n} & A \arrow{d}{m} \\
			B \arrow[swap]{r}{f} & C 
		\end{tikzcd}\end{center}
	Then $n$ is an isomorphism if and only if $f=mh$ for some $h \colon B\to A\in\cat{C}$.
\end{Lemma}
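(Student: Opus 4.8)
The plan is to establish the two implications separately, each by a direct appeal to the universal property of the pullback.

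For the forward direction, I would suppose $n$ is an isomorphism and set $h := g \circ n^{-1} \colon B \to A$. Commutativity of the pullback square gives $m h = m g n^{-1} = f n n^{-1} = f$, so $f = mh$, as required. No subtlety here.

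For the converse, suppose $f = mh$ for some $h \colon B \to A$. I would observe that the square with left edge $1_B \colon B \to B$, top edge $h \colon B \to A$, bottom edge $f$ and right edge $m$ commutes, since $m h = f = f \circ 1_B$. The universal property of the given pullback then produces a unique $k \colon B \to D$ with $n \circ k = 1_B$ and $g \circ k = h$; in particular $n$ is a split epimorphism. On the other hand $n$ is monic, being a pullback of the monomorphism $m$ (using the standard fact that any pullback of a monic is monic: from $n u = n v$ one gets $m g u = f n u = f n v = m g v$, hence $g u = g v$, and then $u = v$ by uniqueness in the pullback property applied to the cone $(n u, g u)$). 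Finally, a monic split epimorphism is an isomorphism: from $n \circ k = 1_B$ we get $n \circ (k \circ n) = (n \circ k) \circ n = n = n \circ 1_D$, so $k \circ n = 1_D$ by cancelling the monic $n$. Hence $n$ is invertible, with inverse $k$.

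The only point needing a little care is the choice of commuting triangle in the converse — one feeds the pair $(1_B, h)$ into the pullback property — together with the invocation of the fact that pullbacks preserve monomorphisms. Neither of these is a genuine obstacle, so I expect the argument to be short and essentially a diagram chase.
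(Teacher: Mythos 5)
Your proof is correct and follows the same route as the paper: the forward direction takes $h = gn^{-1}$, and the converse feeds the cone $(1_B, h)$ into the universal property of the pullback, exactly as the paper's (very terse) proof indicates. Your completion of the converse --- observing that $n$ is then a split epimorphism which is also monic (as a pullback of the monic $m$), hence an isomorphism --- is precisely the detail the paper leaves to the reader.
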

\begin{proof}
	($\Rightarrow$) Take $h=gn^{-1}$. ($\Leftarrow$) Consider maps $1_B\colon B \to B$ and 
	$h\colon B\to A$ and use the fact the square is a pullback.
\end{proof}

We now give the following equivalence of $\M$-categories.

\begin{Thm}\label{ThmEquiv}
	Suppose $\cat{C}$ is an $\M$-category. Then $\M\Total(\PSh_r(\Par(\cat{C})))$ and $\PSh_{\M}(\cat{C})$ are equivalent.
\end{Thm}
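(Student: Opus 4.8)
The plan is to exhibit an explicit equivalence of $\M$-categories. Let $i\colon\cat{C}\to\Par(\cat{C})$ be the inclusion of $\cat{C}$ as the subcategory of total maps, and define a functor $\Phi\colon\PSh(\cat{C})\to\PSh(\Par(\cat{C}))$ by declaring $(\Phi P)(B)$ to be the set of pairs $(m,z)$ with $m\colon Z\to B$ in $\M_{\cat{C}}$ and $z\in PZ$, identified along isomorphisms of $Z$ over $B$; equivalently, $\Phi$ is the left Kan extension of $\yon_r\circ i$ along $\yon\colon\cat{C}\to\PSh(\cat{C})$, so that $\Phi$ is cocontinuous, $\Phi(\yon D)=\yon_r D=\Par(\cat{C})(\thg,D)$, and $\Phi P$ acts on spans by the usual pull-back-and-compose recipe. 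I would equip $\Phi P$ with the restriction structure $\overline{(m,z)}=(m,m)$, the restriction idempotent on $B$ carried by $m$, and check that $\Phi P$ is then a restriction presheaf: axioms (A1)--(A3) reduce to the facts that the pullback of an $\M_{\cat{C}}$-map along itself is the diagonal and that the two legs of a pullback square over $B$ coincide, the very identities used in composing spans in $\Par(\cat{C})$. A natural transformation $\alpha\colon P\To P'$ goes to $(\Phi\alpha)_B(m,z)=(m,\alpha_Z(z))$, which satisfies $\overline{(\Phi\alpha)_B(m,z)}=(m,m)=\overline{(m,z)}$ and so is total by Proposition \ref{TotalMaps}; thus $\Phi$ corestricts to a functor $\PSh(\cat{C})\to\Total(\PSh_r(\Par(\cat{C})))$, the underlying category of $\M\Total(\PSh_r(\Par(\cat{C})))$.

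Next I would show this corestriction is an equivalence of categories. Faithfulness is immediate, and fullness follows because any total $\beta\colon\Phi P\To\Phi P'$ must, by Proposition \ref{TotalMaps}, send $(m,z)$ to some $(m,z'')$ with the same restriction idempotent, and naturality of $\beta$ along the restriction monics $(m,1)\colon B\to Z$ of $\Par(\cat{C})$ then forces $\beta=\Phi\alpha$ for the unique $\alpha$ with $\beta_Z(1_Z,z)=(1_Z,\alpha_Z(z))$. For essential surjectivity I would give a pseudo-inverse $\Psi$, sending a restriction presheaf $Q$ on $\Par(\cat{C})$ to the presheaf $(\Psi Q)(A)=\{\,x\in QA\mid\bar x=1_A\,\}$ of ``totally defined'' elements (a subpresheaf, well-defined because of (A3) and the description of total maps in $\Par(\cat{C})$). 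Then $\Psi\Phi\cong\id$ is clear, while $\Phi\Psi Q\cong Q$ is witnessed componentwise by $(m,x)\mapsto Q(m,1)(x)$: surjectivity uses that for $y\in QB$ with $\bar y=(m,m)$ one has $y\cdot(1,m)\in(\Psi Q)(Z)$ by (A3) and $y=y\cdot\bar y$ by (A1), and injectivity is a further span computation.

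It remains to match up the systems of monics. I would prove that for $\mu\colon P\To P'$, the transformation $\Phi\mu$ is a restriction monic in $\PSh_r(\Par(\cat{C}))$ if and only if $\mu\in\M_{\PSh(\cat{C})}$. For the forward implication, using that $\M_{\PSh(\cat{C})}$-maps are monic, pulling $\mu$ back along each element $w\in P'W$ (viewed as $w\colon\yon W\To P'$) yields a canonical $k_w\colon V\to W$ in $\M_{\cat{C}}$ together with $v\in PV$ with $\mu_V(v)=w\cdot k_w$, and setting $(\Phi\mu)^\dagger_B(n,w)=(nk_w,v)$ defines (by a pasting of pullbacks) a natural transformation with $(\Phi\mu)^\dagger\circ\Phi\mu=\id$ and $\Phi\mu\circ(\Phi\mu)^\dagger$ equal to its own restriction; hence $\Phi\mu$ splits the restriction idempotent $\Phi\mu\circ(\Phi\mu)^\dagger$ and is a restriction monic. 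For the converse, suppose $\Phi\mu$ is a restriction monic with partial inverse $\nu$, and let $\gamma\colon\yon D\To P'$ correspond to $w\in P'D$; writing $\nu_D(1_D,w)=(m,v)$, the fact that $\Phi\mu\circ\nu$ is a restriction idempotent forces $\mu_{C}(v)=w\cdot m$ where $C=\dom m$, so that $\mu\circ v=\gamma\circ\yon m$. To see this square is a pullback it suffices to test against representables $\yon E$: given $g_0\colon E\to D$ and $f_0\in PE$ with $\mu_E(f_0)=w\cdot g_0$, naturality of $\nu$ applied to $(1_E,g_0)\colon E\to D$ together with $\nu\circ\Phi\mu=\id$ shows that the pullback $m'$ of $m$ along $g_0$ admits a section, hence (being monic) is invertible, and Lemma \ref{PbLemma} then gives the factorisation $g_0=m h_0$ with $v\cdot h_0=f_0$, the map $h_0$ being unique since $m$ is monic. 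Thus $\yon m$ is a pullback of $\mu$ along $\gamma$, so $\mu\in\M_{\PSh(\cat{C})}$.

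Since an equivalence of the underlying categories that identifies the two systems of monics is automatically an equivalence of $\M$-categories, this yields the required $\M\Total(\PSh_r(\Par(\cat{C})))\simeq\PSh_{\M}(\cat{C})$. I expect the main obstacle to be the converse half of the last step: correctly tracking the naturality squares of the partial inverse $\nu$ and reducing the pullback test to representables so that Lemma \ref{PbLemma} applies; the earlier verifications that $\Phi P$ is a restriction presheaf and that $\Psi$ is a genuine pseudo-inverse are routine span manipulations by comparison.
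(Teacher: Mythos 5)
Your proposal is correct and follows essentially the same route as the paper: your $\Phi$ and $\Psi$ are exactly the paper's functors $F$ and $G$ (same formula $(m,z)$ for $(FP)(B)$, same restriction $(m,m)$, same subpresheaf of elements with $\bar x=(1,1)$), and the matching of the monic classes proceeds by the same two computations, extracting $k_w$ from the pullback property of $\mu$ in one direction and extracting $m$ from the splitting $\nu$ and showing the auxiliary pullback $m'$ is invertible via Lemma \ref{PbLemma} in the other. The only cosmetic difference is that you verify ``restriction monic'' by exhibiting the splitting directly where the paper phrases it as an equaliser of $1$ and a restriction idempotent.
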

\begin{proof}
	We adopt the following approach. First, find functors $F \colon \PSh(\cat{C}) \to \Total(\PSh_r(\Par(\cat{C})))$ and
	$G \colon \Total(\PSh_r(\Par(\cat{C}))) \to \PSh(\cat{C})$, and natural isomorphisms $\eta\colon1 \To GF$ and 
	$\epsilon\colon FG \To 1$. We then show that $F$ and $G$ are in fact $\M$-functors. (Note that $\eta$ and $\epsilon$ must 
	necessarily be $\M$-cartesian).
	
	So let $P$ be a presheaf on $\cat{C}$, and define $F$ on objects as follows. If $X \in \Par(\cat{C})$, then $(FP)(X)$ is the set of 
	equivalence classes
		$$ (FP)(X) = \{ (m,f) \mid m\colon Y \to X \in \M_{\cat{C}}, f \in PY \} $$
	where $(m,f) \sim (n,g)$ if and only if there exists an isomorphism $\phi$ such that $n=m\phi$ and $g=f\cdot\phi$. 
	To define $FP$ on morphisms, given $(n,g) \colon Z \to X$ in $\Par(\cat{C})$ and an element $(m,f) \in (FP)(X)$, define
		$$ \Big( (FP)(n,g) \Big) (m,f) = (nm', f\cdot g') $$
	where $(m',g')$ is the pullback of $(m,g)$, as in:
		\begin{center}\begin{tikzcd}
			\bullet \arrow{r}{g'} \arrow[swap]{d}{m'} & \bullet \arrow{d}{m} \\
			\bullet \arrow[swap]{r}{g} & \bullet
		\end{tikzcd}\end{center}
	We shall sometimes denote the above informally as $(m,f) \cdot (n,g)$. Then defining the restriction on each 
	$(m,f) \in (FP)(X)$ to be $(m,m)$ makes $FP \colon \Par(\cat{C})^{\op} \to \Set$ a restriction presheaf. This defines $F$
	on objects. 
	
	Now suppose $\alpha \colon P \To Q$ is a morphism in $\PSh(\cat{C})$. Define $F\alpha \colon FP \to FQ$ componentwise 
	as follows:
		$$ (F\alpha)_X(m,f) = (m,\alpha_{\dom \,m} (f)). $$
	Then $F\alpha$ is natural (by naturality of $\alpha$) and also total, making $F$ a functor from $\PSh(\cat{C})$ to 
	$\Total(\PSh_r(\Par(\cat{C})))$. We now give the data for the functor $G$ from $\Total(\PSh_r(\Par(\cat{C})))$ to $\PSh(\cat{C})$.
	
	Let $P$ be a restriction presheaf on $\Par(\cat{C})$, and define $GP\colon\cat{C}^{\op}\to\Set$ as follows. If $X \in\cat{C}$, then
		$$ (GP)(X) = \{ x \mid x \in PX, \bar{x} = (1,1) \}. $$
	And if $f\colon Z \to X$ is an arrow in $\cat{C}$, define
		$$ (GP)(f) = P(1,f). $$
	Note that $(GP)(f)$ is well-defined since for every $x\in (GP)(X)$,
		$$ \ov{P(1,f)(x)} = \ov{x\cdot(1,f)} = \ov{\bar{x} \circ (1,f)} = 1, $$
	and so $(GP)(f)$ is a function from $(GP)(X)$ to $(GP)(Z)$.
	
	Finally, if $\alpha \colon P \To Q$ is a total map in $\PSh_r(\Par(\cat{C}))$, define $G\alpha\colon GP \To GQ$ componentwise by
		$$ (G\alpha)_X(x) = \alpha_X(x) $$
	for every $X \in \cat{C}$ and $x\in (GP)(X)$. Again, to see that $G\alpha$ is well-defined, note that $\alpha$ total implies
	$\ov{\alpha_X(x)} = \ov{x} = 1$ (Proposition \ref{TotalMaps}) and so $\alpha_X(x) \in (GQ)(X)$.
	This makes $G$ a functor from $\Total(\PSh_r(\Par(\cat{C})))$ to $\PSh(\cat{C})$. The next step is defining 
	isomorphisms $\eta \colon 1 \To GF$ and $\epsilon \colon FG \To 1$.
	
	To define $\eta$, we need to give components for every presheaf $P$ on $\cat{C}$, and this involves giving isomorphisms 
	$(\eta_P)_X \colon PX \to (GFP)(X)$. But $(GFP)(X) = \{ (1,f) \mid f\in PX \}$. Therefore, defining $(\eta_P)_X(f) = (1,f)$ makes
	$\eta$ an isomorphism, and naturality is easy to check.
	
	Similarly, to define $\epsilon$, we need to define isomorphisms $(\epsilon_P)_X \colon (FGP)(X) \to PX$ for every restriction 
	presheaf $P$ on $\Par(\cat{C})$ and object $X \in \Par(\cat{C})$. Since 
		$$ (FGP)(X) = \{ (m,f) \mid m \colon Y \to X \in \M_{\cat{C}}, f\in PY, \bar{f}=(1,1) \}, $$
	define $(\epsilon_P)_X(m,f) = f \cdot (m,1)$. Its inverse $(\epsilon_P)^{-1}_X \colon PX \to (FGP)(X)$ is then given by
		$$ (\epsilon_P)^{-1}_X(x) = (n, x\cdot (1,n)) $$
	where $\bar{x} = (n,n)$ (as $P$ is a restriction presheaf on $\Par(\cat{C})$). Checking the naturality of $\epsilon$ is 
	again straightforward. All that remains is to show that both $F \colon \PSh_{\M}(\cat{C}) \to \M\Total(\PSh_r(\Par(\cat{C})))$ 
	and $G \colon \M\Total(\PSh_r(\Par(\cat{C}))) \to \PSh_{\M}(\cat{C})$ are $\M$-functors. However, as $F$ and $G$ are 
	equivalences in $\Cat$, they necessarily preserve limits, and so all this will involve is showing that they preserve $\M$-maps. 
	That is, $F\mu$ is a restriction monic in $\PSh_r(\Par(\cat{C}))$ for all $\mu \in \M_{\PSh(\cat{C})}$, and that 
	$G\mu$ is in $\M_{\PSh(\cat{C})}$ for all restriction monics $\mu \in \PSh_r(\Par(\cat{C}))$.
	
	So let $\mu \colon P \To Q$ be in $\M_{\PSh(\cat{C})}$. To show $F\mu$ is a restriction monic, we need to show $F\mu$ is the
	equaliser of $1$ and some restriction idempotent $\alpha \colon FQ \To FQ$. To define this $\alpha$, let
	$X \in \Par(\cat{C})$ and $(n,g) \in (FQ)(X)$ (where $n \colon Z \to X$). Now as $g\in QZ$, there exists a corresponding natural
	transformation $\hat{g}\colon\yon Z \To Q$ (Yoneda). However, as $\mu$ is in $\M_{\PSh(\cat{C})}$, there exists an
	$m_g \colon B \to Z$ in $\M_{\cat{C}}$ making the following a pullback:
		\begin{center}\begin{tikzcd}
			\yon B \arrow{r} \arrow[swap]{d}{\yon m_g} & P \arrow{d}{\mu} \\
			\yon Z \arrow[swap]{r}{\hat{g}} & Q
		\end{tikzcd}\end{center}
	So define $\alpha$ by its components as follows,
		$$ \alpha_X(n,g) = (nm_g, g \cdot m_g). $$
	It is then not difficult to show this $\alpha$ is well-defined, is a natural transformation and is a restriction idempotent.
	
	Now to show that $F\mu$ equalises $1$ and $\alpha$, we need to show $(F\mu)_X\colon (FP)(X) \to (FQ)(X)$ is an equaliser of 
	$1$ and $\alpha_{(FQ)(X)}$ in $\Set$ for all $X\in\Par(\cat{C})$. In other words, that $(F\mu)_X$ is injective, and that:
	\begin{equation}\label{FmuXinj}\begin{split}
		& \text{$(n,g)\in (FQ)(X)$ satisfies $(n,g)=(F\mu)_X(m,f)=(m,\mu_{\dom \,m}(f))$ for some}\\
		& \text{$(m,f)\in (FP)(X)$ if and only if $\alpha_X(n,g) = (n,g)$}.
	\end{split}\end{equation}
	
	To show $(F\mu)_X$ is injective, suppose $(F\mu)_X(m,f) = (F\mu)_X(m',f')$, or equivalently,
	$(m,\mu_{\dom\,m}(f)) = (m',\mu_{\dom\,m'}(f'))$. That is, there exists an isomomorphism $\varphi$ such that
	$m'=m\varphi$ and $\mu_{\dom\,m'}(f')=\mu_{\dom\,m}(f)\cdot \varphi$. But the naturality of $\mu$ implies
	$\mu_{\dom\,m'}(f\cdot\varphi)=\mu_{\dom\,m}(f)\cdot \varphi = \mu_{\dom\,m'}(f')$. Therefore, as $\mu$ is monic,
	we must have $f\cdot\varphi=f'$. Hence $(m,f)=(m',f')$, and so $(F\mu)_X$ is injective.
	
	To prove \eqref{FmuXinj}, let $(n,g)\in(FQ)(X)$ and suppose $\mu_X(n,g)=(n,g)$. That is, $(nm_g,g\cdot m_g)=(n,g)$, or
	that $m_g$ is an isomorphism. Now $m_g$ is an isomorphism if and only if $\yon m_g$ is an isomorphism, and by 
	Lemma \ref{PbLemma}, $\yon m_g$ is an isomorphism if and only if $\hat{g}=\mu \hat{h}$ for some $\hat{h}\colon\yon Z\to P$:
		\begin{center}\begin{tikzcd}
			\yon B \arrow{r} \arrow[swap]{d}{\yon m_g} & P \arrow{d}{\mu} \\
			\yon Z \arrow[swap]{r}{\hat{g}} \arrow[dashed]{ur}{\hat{h}} & Q
		\end{tikzcd}\end{center}
	But by Yoneda, the statement $\hat{g}=\mu\hat{h}$ is equivalent to the statement that $g=\mu_Z(h)$ for some $h\in PZ$,
	which is the same as saying $(n,g) = (n,\mu_Z(h)) = (F\mu)_X(n,h)$, with $(n,h) \in (FP)(X)$. Therefore, $(F\mu)_X$ is
	an equaliser of $1$ and $\alpha_{(FQ)(X)}$ in $\Set$ for all $X\in\Par(\cat{C})$, and hence, $F\mu$ equalises $1$ and $\alpha$.
	
	Now to see that $G$ is also an $\M$-functor, let $\mu \colon P \To Q$ be a restriction monic in $\PSh_r(\Par(\cat{C}))$. To show
	$G\mu$ is in $\M_{\PSh(\cat{C})}$, we need to show for any given $\hat{\theta} \colon \yon C \To Q$, there exists a monic 
	$m \colon D \to C$ in $\M_{\cat{C}}$ and a map $\hat{\delta} \colon \yon D \To P$ making the following a pullback:
		\begin{center}\begin{tikzcd}
			\yon D \arrow{r}{\hat{\delta}} \arrow[swap]{d}{\yon m} & GP \arrow{d}{G\mu} \\
			\yon C \arrow[swap]{r}{\hat{\theta}} & GQ
		\end{tikzcd}\end{center}
	Here we make two observations. First, commutativity says $m$ and $\delta$ must satisfy 
	$G\mu \circ \hat{\delta} = \hat{\theta} \circ \yon m$. On the other hand, Yoneda tells us that 
	$\hat{\theta} \circ \yon m = \wh{\theta \cdot m}$ and $G\mu \circ \hat{\delta} = \wh{ (G\mu)_D(\delta)}$, where 
	$\theta\in QC$ and $\delta\in PD$ are the unique transposes of $\hat{\theta}$ and $\hat{\delta}$ respectively. Therefore, 
	$m$ and $\delta$ must satisfy the following condition:
		\begin{equation} \label{eq3a} (G\mu)_D(\delta) = \theta \cdot_{GQ} m. \end{equation}
	That is, $\mu_D(\delta) = \theta \cdot_Q(1,m)$. Secondly, $m$ and $\delta$ must make the following a pullback in 
	$\Set$ (for all objects $X \in \cat{C}$):
		\begin{center}\begin{tikzcd}[column sep=huge]
			\cat{C}(X,D) \arrow{r}{\hat{\delta}_X = \delta \cdot_{GP} (-)} \arrow[swap]{d}{m \circ (-)} 
					& (GP)(X) \arrow{d}{(G\mu)_X} \\
			\cat{C}(X,C) \arrow[swap]{r}{\hat{\theta}_X = \theta \cdot_{GQ} (-)} & (GQ)(X)
		\end{tikzcd}\end{center}
	In other words, for any $f \in \cat{C}(X,C)$ and $x \in (GP)(X)$ such that $\theta \cdot_{GQ}f = (G\mu)_X(x)$
	(i.e., such that $\theta \cdot_Q (1,f) = \mu_X(x)$), there exists a unique $g\in\cat{C}(X,D)$ such that
		\begin{equation} \label{eq3b} \delta \cdot_{GP} g = x, \text{ and } mg=f. \end{equation}
	Alternatively, $\delta \cdot_P (1,g) = x$ and $mg=f$. To find $m$, note that because $\mu$ is a 
	restriction monic, there exists a $\rho$ such that $\mu \rho= \bar{\rho}$ and $\rho\mu=1$. Since $\theta \in QC$, applying 
	$\rho_C$ to $\theta$ and then taking its restriction gives $\ov{\rho_C(\theta)} = (m,m)$ for some $m \in \M_{\cat{C}}$. This gives us 
	$m$.
	
	To define $\delta$, observe that $P(1,m)$ is a function from $PC$ to $PD$. So define
		$$ \delta = \rho_C(\theta) \cdot_P (1,m). $$
	Then $\delta \in (GP)(D)$ since
		$$ \bar{\delta} = \ov{\ov{\rho_C(\theta)} \circ (1,m)} = \ov{(m,m) \circ (1,m)} = \ov{(1,m)} = (1,1). $$
	So all that remains is to show $m$ and $\delta$ satisfy \eqref{eq3a} and \eqref{eq3b}. To show $m$ and $\delta$ satisfy
	\eqref{eq3a}, one simply substitutes the given values into the equation, using the fact $\mu\rho=\bar{\rho}$. To see that 
	\eqref{eq3b} is also satisfied, suppose there exists an $f\in\cat{C}(X,C)$ and $x\in(GP)(X)$ such that 
	$\theta\cdot_P(1,f)=\mu_X(x)$. Then applying $\rho_X$ to both sides gives
		$$ \rho_C(\theta) \cdot_P (1,f) = x $$
	since $\rho\mu=1$. We need to show there exists a $g$ such that $mg=f$ and $\delta\cdot_P(1,g)=x$. But $mg=f$ implies
		$$ x = \rho_C(\theta) \cdot_P (1,f) = \rho_C(\theta) \cdot_P (1,mg) = \rho_C(\theta) \cdot_P (1,m) \cdot_P (1,g) 
				= \delta \cdot_P (1,g) $$
	Therefore, we just need to find $g$.
	
	Consider the composite $(m,m) \circ (1,f) = (m',mf')$, where $(m',f')$ is the pullback of $(m,f)$:
		\begin{center}\begin{tikzcd}
			X \times_C D \arrow{r}{f'} \arrow[swap]{d}{m'} & D \arrow{d}{m} \\
			X \arrow[swap]{r}{f} & C
		\end{tikzcd}\end{center}
	Note that if $m'$ is an isomorphism, then $g=f'(m')^{-1}$ will satisfy the condition $mg=f$. Now by restriction presheaf axioms and 
	naturality of $\bar{\rho}$, we have $\theta \cdot_Q (m',mf') = \theta\cdot_Q(1,f)$. But $\theta\in(GQ)(C)$ implies
		$$ \ov{\theta\cdot_Q(m',mf')} = \ov{\bar{\theta} \circ (m',mf')} = \ov{(m',mf')} = (m',m') $$
	and
		$$ \ov{\theta\cdot_Q(1,f)} = \ov{\bar{\theta}\circ(1,f)} = \ov{(1,f)} = (1,1). $$
	Therefore, $m'$ must be an isomorphism, which means $m$ and $\delta$ satisfy \eqref{eq3b}. Hence, $G$ is also an $\M$-functor and
	$\PSh_{\M}(\cat{C})$ and $\M\Total(\PSh_r(\Par(\cat{C})))$ are equivalent.
\end{proof}

We now use the above theorem to prove the following result.

\begin{Prop}\label{equivRestr}
	Let $\cat{C}$ be an $\M$-category. Then there exists an equivalence of restriction categories 
	$L \colon \Par(\PSh_{\M}(\cat{C})) \to \PSh_r(\Par(\cat{C}))$ satisfying the relation $\yon_r = L \circ \Par(\yon)$.
\end{Prop}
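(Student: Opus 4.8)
The plan is to obtain $L$ by transporting the equivalence of Theorem~\ref{ThmEquiv} across the $2$-functor $\Par$, and then repairing the target using the $2$-equivalence between $\M\Cat$ and $\cat{rCat}_s$. Write $F \colon \PSh_{\M}(\cat{C}) \to \M\Total(\PSh_r(\Par(\cat{C})))$ for the equivalence of $\M$-categories built in the proof of Theorem~\ref{ThmEquiv}. Since $\Par \colon \M\Cat \to \cat{rCat}_s$ is a $2$-functor it preserves equivalences, so $\Par(F) \colon \Par(\PSh_{\M}(\cat{C})) \to \Par(\M\Total(\PSh_r(\Par(\cat{C}))))$ is an equivalence of restriction categories. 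By Proposition~\ref{SplitRCat} the restriction category $\PSh_r(\Par(\cat{C}))$ is split, so after choosing splittings of its restriction idempotents the unit component $\Phi_{\PSh_r(\Par(\cat{C}))} \colon \PSh_r(\Par(\cat{C})) \to \Par(\M\Total(\PSh_r(\Par(\cat{C}))))$ of the $2$-equivalence is an isomorphism of restriction categories. I would then set $L \defeq \Phi_{\PSh_r(\Par(\cat{C}))}^{-1} \circ \Par(F)$; as a composite of an equivalence with an isomorphism in $\cat{rCat}_s$, this $L$ is an equivalence of restriction categories.

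It then remains to establish $\yon_r = L \circ \Par(\yon)$. By the definition of $L$ and functoriality of $\Par$ this amounts to the identity $\Par(F \circ \yon) = \Phi_{\PSh_r(\Par(\cat{C}))} \circ \yon_r$, and using naturality of the unit $\Phi$ at the restriction functor $\yon_r \colon \Par(\cat{C}) \to \PSh_r(\Par(\cat{C}))$ together with a triangle identity (and invertibility of $\Phi_{\Par(\cat{C})}$, since $\Par(\cat{C})$ is itself split, so that $\Phi_{\Par(\cat{C})} = \Par(\Psi_{\cat{C}})^{-1}$ for the counit $\Psi$), this reduces in turn to the identity $F \circ \yon = \M\Total(\yon_r) \circ \Psi_{\cat{C}}^{-1}$ of $\M$-functors $\cat{C} \to \M\Total(\PSh_r(\Par(\cat{C})))$. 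The latter I would check directly from the explicit description of $F$ in Theorem~\ref{ThmEquiv}: on objects, $F(\yon X)$ sends $Y \in \Par(\cat{C})$ to the set of equivalence classes of spans $Y \xleftarrow{m} Z \xrightarrow{f} X$ with $m \in \M_{\cat{C}}$, which is precisely $\Par(\cat{C})(Y,X) = (\yon_r X)(Y)$, with the presheaf action and the restriction $(m,m)$ matching; and on a morphism $f$ of $\cat{C}$, the natural transformation $F(\yon f)$ is post-composition with the total map $(1,f)$ of $\Par(\cat{C})$, which is exactly $\yon_r(1,f)$. Hence the two composites agree on the nose.

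The main obstacle I expect is bookkeeping rather than conceptual. One must check throughout that all of these identifications respect the restriction structure, so that $L$ genuinely lives in $\cat{rCat}_s$ and the triangle commutes as a diagram of \emph{restriction} functors, not merely of underlying functors; and one must be careful about the strict-versus-pseudo nature of the $2$-equivalence, since a priori the passage through $\Par$ and $\Phi^{-1}$ might only yield $\yon_r \cong L \circ \Par(\yon)$. The span calculus shows that $F$ may be taken so that $F(\yon X)$ is \emph{literally} the hom-presheaf $\Par(\cat{C})(-,X)$, and this is what allows one to promote the isomorphism to the stated equality; everything else reduces to the already-established properties of $\Par$, $\M\Total$, and their unit and counit.
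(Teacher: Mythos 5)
Your proposal is correct and follows essentially the same route as the paper: the paper likewise defines $L = \Phi^{-1}_{\PSh_r(\Par(\cat{C}))} \circ \Par(F)$ as the transpose of $F$ across the $\Par$/$\M\Total$ $2$-equivalence, reduces the triangle to the identity $F\yon = \tilde{\yon}_r$ where $\tilde{\yon}_r$ is exactly your $\M\Total(\yon_r)\circ\Psi_{\cat{C}}^{-1}$, and verifies that identity by the same span computation showing $F(\yon A)$ is literally $\Par(\cat{C})(\thg,A)$ and $F(\yon h)$ is post-composition with $(1,h)$.
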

\begin{proof}
	Since $\Par$ and $\M\Total$ are 2-equivalences, the following is an isomorphism of categories: \small
		$$ \M\Cat\Big( \PSh_{\M}(\cat{C}), \M\Total(\PSh_r(\Par(\cat{C}))) \Big)
			\cong \cat{rCat}\Big( \Par(\PSh_{\M}(\cat{C})), \PSh_r(\Par(\cat{C})) \Big). $$ \normalsize
	We know from Theorem \ref{ThmEquiv} that $F \colon \PSh_{\M}(\cat{C}) \to \M\Total(\PSh_r(\Par(\cat{C})))$ is an 
	equivalence. So define $L=\tilde{F}$, the transpose of $F$. Explicitly, 
	$\tilde{F} = \Phi^{-1}_{\PSh_r(\Par(\cat{C}))} \circ \Par(F)$, where $\Phi_{\PSh_r(\Par(\cat{C}))}$ is the unit of the 
	$\Par$ and $\M\Total$ $2$-equivalence.
	
	Now define $\tilde{\yon}_r \colon \cat{C} \to \M\Total(\PSh_r(\Par(\cat{C})))$ as the transpose of 
	$\yon_r \colon \Par(\cat{C}) \to \PSh_r(\Par(\cat{C}))$. Explicitly, $\tilde{\yon}_r$ is the unique map making the following diagram 
	commute:
		\begin{center}\begin{tikzcd}
			\cat{C} \arrow{r}{\tilde{\yon}_r} \arrow[hook]{d} & \M\Total(\PSh_r(\Par(\cat{C}))) \arrow[hook]{d} \\
			\Par(\cat{C}) \arrow[swap]{r}{\yon_r} & \PSh_r(\Par(\cat{C}))
		\end{tikzcd}\end{center}
	Since $\tilde{\yon}_r = F \yon$ will imply $\yon_r = L \circ \Par(\yon)$, we prove the former. So let $A\in\Par(\cat{C})$. Then 
	$\tilde{\yon}_r(A) = \Par(\cat{C})(-,A)$ by definition. On the other hand, $(F\yon)(A)$ defined on objects $B\in\Par(\cat{C})$ 
	is the following set:
		$$ (F\yon A)(B) = \{ (m,f) \mid m\colon Y \to B \in\M_{\cat{C}}, f \in\cat{C}(Y,A) \}. $$
	In other words, elements of $(F\yon A)(B)$ are spans
		$$ B \xleftarrow{m} Y \xrightarrow{f} A $$
	Clearly $(F\yon A)(B) = \Par(\cat{C})(B,A) = (\tilde{\yon}_r A)(B)$. Likewise, if $(n,g) \colon C \to B$ is a morphism in 
	$\Par(\cat{C})$, then $(F\yon A)(n,g) = (-) \circ (n,g) = (\tilde{\yon}_r A)(n,g)$, and so $\tilde{\yon}_r(A) = (F\yon)(A)$.
	
	Now let $h \colon B \to C$ be a morphism in $\cat{C}$. Then $(F\yon)(h) \colon \Par(\cat{C})(-,B) \To \Par(\cat{C})(-,C)$ 
	has components given by
		$$ (F\yon h)_D(n,g) = (n,(\yon h)_{\dom \, n}(g)) = (n,hg) = (1,h) \circ (n,g) $$
	for all $D \in\Par(\cat{C})$ and $(n,g)\in\Par(\cat{C})(D,C)$. But $\tilde{\yon}_r(h) = \yon_r(1,h)$ also has components 
	given by $\Big( \yon_r(1,h) \Big)_D = (1,h) \circ (-)$ at $D\in\Par(\cat{C})$. Therefore, $(F\yon)(h)=\tilde{\yon}_r(h)$ and so 
	$F\yon = \tilde{\yon}_r$. Hence, $\yon_r = L \circ \Par(\yon)$.
\end{proof}

We now prove the main result of this section.
\begin{Thm}\label{CLembed}
	Let $\cat{X}$ be a restriction category. Then
		$$ \PSh_r(\cat{X}) \simeq \Par(\PSh_{\M}(\M\Total(\msf{K}_r(\cat{X})))) $$
	and the following diagram commutes up to isomorphism:
		\begin{center}\begin{tikzcd}[row sep=small,column sep=small]
			& \cat{X} \arrow[swap]{ddl}{\yon_r} \arrow{ddr}{\eqref{CL}} & \\
			& \cong & \\
			\PSh_r(\cat{X}) & & \Par(\PSh_{\M}(\M\Total(\msf{K}_r(\cat{X})))) \arrow{ll}{\simeq} 
		\end{tikzcd}\end{center}
	where \eqref{CL} is the Cockett and Lack embedding.
\end{Thm}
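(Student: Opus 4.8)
The plan is to deduce the theorem by stitching together the three results already proven: Proposition~\ref{equivRestr}, the equivalence $\PSh_r(\cat{X}) \simeq \PSh_r(\msf{K}_r(\cat{X}))$, and the fact that $\Par$ and $\M\Total$ are inverse $2$-equivalences whose unit component $\Phi_{\msf{K}_r(\cat{X})}\colon \msf{K}_r(\cat{X}) \to \Par(\M\Total(\msf{K}_r(\cat{X})))$ is an isomorphism (as $\msf{K}_r(\cat{X})$ is split). Write $\cat{C} \defeq \M\Total(\msf{K}_r(\cat{X}))$, so that $\Phi_{\msf{K}_r(\cat{X})}$ identifies $\Par(\cat{C})$ with $\msf{K}_r(\cat{X})$.

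First I would assemble the equivalence. Applying Proposition~\ref{equivRestr} to $\cat{C}$ yields an equivalence of restriction categories $L\colon \Par(\PSh_{\M}(\cat{C})) \to \PSh_r(\Par(\cat{C}))$. Since $\Phi_{\msf{K}_r(\cat{X})}$ is an isomorphism of restriction categories, precomposition with its opposite gives an isomorphism $\PSh_r(\Par(\cat{C})) \cong \PSh_r(\msf{K}_r(\cat{X}))$, and, restriction structures on presheaves being unique, this is again a restriction functor. The proposition identifying $\PSh_r(\msf{K}_r(\cat{X}))$ with $\PSh_r(\cat{X})$ via $(-)\circ J^{\op}$ closes the chain, so the composite
$$\Par(\PSh_{\M}(\cat{C})) \xrightarrow{\ L\ } \PSh_r(\Par(\cat{C})) \xrightarrow{\ \cong\ } \PSh_r(\msf{K}_r(\cat{X})) \xrightarrow{\ \simeq\ } \PSh_r(\cat{X})$$
is an equivalence of restriction categories, which is the first assertion.

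It remains to verify the triangle. By Proposition~\ref{equivRestr} the embedding $\yon_r\colon \Par(\cat{C}) \to \PSh_r(\Par(\cat{C}))$ satisfies $\yon_r = L \circ \Par(\yon)$, and $\Phi_{\msf{K}_r(\cat{X})} \circ J$ is exactly the composite of the first two legs of \eqref{CL}; so precomposing the displayed chain with \eqref{CL} reduces the claim to transporting $\yon_r \circ \Phi_{\msf{K}_r(\cat{X})} \circ J$ along the last two equivalences and comparing it with $\yon_r\colon \cat{X} \to \PSh_r(\cat{X})$. This is a ``naturality of Yoneda'' computation performed twice, each time for a fully faithful functor: for $\Phi_{\msf{K}_r(\cat{X})}$ one has $(-\circ\Phi_{\msf{K}_r(\cat{X})}^{\op})\circ \yon_r^{\Par(\cat{C})}\circ\Phi_{\msf{K}_r(\cat{X})} \cong \yon_r^{\msf{K}_r(\cat{X})}$, and for $J$ one has $(-\circ J^{\op})\circ\yon_r^{\msf{K}_r(\cat{X})}\circ J \cong \yon_r^{\cat{X}}$, the point being that a morphism $(B,1_B)\to(A,1_A)$ in $\msf{K}_r(\cat{X})$ is just a morphism $B\to A$ in $\cat{X}$, so $\msf{K}_r(\cat{X})(J{-},JA) = \cat{X}({-},A)$ on the nose and compatibly with restrictions. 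Composing the two isomorphisms gives the desired commutativity up to isomorphism.

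There is no new mathematical content here; \textbf{the main obstacle is purely organisational}, namely keeping straight the variance of the $\PSh_r$-action on functors, the direction of $\Phi_{\msf{K}_r(\cat{X})}$ and of the transposes, and checking at each stage that the restriction structure is preserved -- all of which has, however, already been dealt with in the cited results, so the argument amounts to chaining those equivalences and their Yoneda-naturality squares into a single commuting diagram.
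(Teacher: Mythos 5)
Your proposal is correct and follows essentially the same route as the paper: the right-hand leg is handled by Proposition~\ref{equivRestr} (via $\yon_r = L\circ\Par(\yon)$), and the remaining triangle is closed by transporting the Yoneda embedding along the fully faithful composite $\Phi_{\msf{K}_r(\cat{X})}\circ J$, with the bottom equivalence given by precomposition with $(\Phi_{\msf{K}_r(\cat{X})}\circ J)^{\op}$. The only cosmetic difference is that you split the left-hand square into two Yoneda-naturality steps (one for $\Phi_{\msf{K}_r(\cat{X})}$, one for $J$) where the paper treats $\Phi_{\msf{K}_r(\cat{X})}\circ J$ as a single fully faithful functor.
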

\begin{proof}
	Consider the following diagram, where $\cat{C} = \M\Total(\msf{K}_r(\cat{X}))$ and the top composite is \eqref{CL}:
		\begin{center}\begin{tikzcd}[column sep=huge]
			\cat{X} \arrow[swap]{d}{\yon_r} \arrow{r}{\Phi_{\msf{K}_r(\cat{X})} \circ J} & \Par(\cat{C}) \arrow{r}{\Par(\yon)} \arrow{d}{\yon_r}
					& \Par(\PSh_{\M}(\cat{C})) \arrow{d}{L} \\
			\PSh_r(\cat{X}) & \PSh_r(\Par(\cat{C})) \arrow[-, double equal sign distance]{r} \arrow[swap]{l}{(-) \circ (\Phi_{\msf{K}_r(\cat{X})} \circ J)^{\op}} 
					& \PSh_r(\Par(\cat{C}))
		\end{tikzcd}\end{center}
	By Proposition~\ref{equivRestr}, the right square commutes up to isomorphism. However, the left square also commutes up to 
	isomorphism as $\Phi_{\msf{K}_r(\cat{X})} \circ J$ is fully faithful. Hence the result follows.
\end{proof}

\begin{Cor}
	For any small restriction category $\cat{X}$, the embedding $\yon_r\colon \cat{X} \to\PSh_r(\cat{X})$ exhibits
	$\PSh_r(\cat{X})$ as the \emph{free restriction cocompletion} of $\cat{X}$.
\end{Cor}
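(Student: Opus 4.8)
The plan is to deduce the corollary from Theorem~\ref{CLembed} together with the free-cocompletion theorem for restriction categories, by transporting the latter's universal property along the equivalence of Theorem~\ref{CLembed}. Write $\cat{Y} = \Par(\PSh_{\M}(\M\Total(\msf{K}_r(\cat{X}))))$, and let $E \colon \cat{Y} \to \PSh_r(\cat{X})$ be the equivalence of restriction categories supplied by Theorem~\ref{CLembed}, chosen so that $E \circ \eqref{CL} \cong \yon_r$. Recall that $\cat{Y}$ is already known to be a cocomplete restriction category, and that the free-cocompletion theorem says precisely that
$$ (-) \circ \eqref{CL} \colon \cat{rCocomp}(\cat{Y},\E) \xrightarrow{\simeq} \cat{rCat}(\cat{X},\E) $$
is an equivalence for every cocomplete restriction category $\E$.

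The first step is to verify that $\PSh_r(\cat{X})$ is itself a cocomplete restriction category. It is split by Proposition~\ref{SplitRCat}. For the remaining two clauses I would use that $E$, being an internal equivalence of the $2$-category $\cat{rCat}$ and in particular a restriction functor, restricts to a functor $\Total(E) \colon \Total(\cat{Y}) \to \Total(\PSh_r(\cat{X}))$; since totality of a map is the condition that its restriction be an identity, and since $E$ is full, faithful and essentially surjective with restriction-transformation unit and counit (whose components are therefore total), $\Total(E)$ is again an equivalence, so $\Total(\PSh_r(\cat{X}))$ is cocomplete. Finally, the evident square relating the two total-subcategory inclusions to $E$ and $\Total(E)$ commutes up to isomorphism, so, since $\Total(\cat{Y}) \hookrightarrow \cat{Y}$ preserves colimits and $E$ and $\Total(E)$ preserve and reflect them, the inclusion $\Total(\PSh_r(\cat{X})) \hookrightarrow \PSh_r(\cat{X})$ preserves colimits too. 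The same argument shows that $E$ and a chosen pseudo-inverse are cocontinuous restriction functors.

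The second step is routine $2$-categorical transport: precomposition with $E$ and with a pseudo-inverse of $E$ give functors between $\cat{rCocomp}(\PSh_r(\cat{X}),\E)$ and $\cat{rCocomp}(\cat{Y},\E)$, and the two composites are naturally isomorphic to the identities by whiskering the unit and counit of the equivalence $E$ with the relevant cocontinuous restriction functors; these whiskered $2$-cells are restriction transformations, since a restriction functor sends total maps to total maps and precomposition with a functor only reindexes the (total) components. Hence $(-) \circ E$ is an equivalence of categories, and composing it with the displayed free-cocompletion equivalence, while using $E \circ \eqref{CL} \cong \yon_r$ to identify the composite with $(-) \circ \yon_r$, yields that
$$ (-) \circ \yon_r \colon \cat{rCocomp}(\PSh_r(\cat{X}),\E) \xrightarrow{\simeq} \cat{rCat}(\cat{X},\E) $$
is an equivalence for every cocomplete restriction category $\E$; this is exactly the assertion that $\yon_r$ exhibits $\PSh_r(\cat{X})$ as the free restriction cocompletion of $\cat{X}$.

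The main obstacle is not conceptual but organisational: it lies entirely in the first step, namely in making precise that an internal equivalence of $\cat{rCat}$ restricts to an equivalence of total subcategories and thereby transfers the three defining clauses of ``cocomplete restriction category'', and in checking that all the $2$-cells appearing in the transport of the second step are genuine restriction transformations. Once this bookkeeping is in place, the corollary is immediate.
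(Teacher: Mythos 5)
Your proposal is correct and is exactly the argument the paper intends: the corollary is stated without proof because it follows immediately by transporting the universal property of the Cockett--Lack embedding (the free-cocompletion theorem for restriction categories) along the equivalence of Theorem~\ref{CLembed}, using the commuting triangle $E \circ \eqref{CL} \cong \yon_r$. Your additional bookkeeping --- that an equivalence in $\cat{rCat}$ restricts to an equivalence of total subcategories and hence transfers cocompleteness to $\PSh_r(\cat{X})$ --- is the right way to make this precise.
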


\section{Free cocompletion of locally small restriction categories}\label{sec5}
So far in our discussions, we have considered the free cocompletion of a small $\M$-category $\cat{C}$ and of a small restriction
category $\cat{X}$, given by $\PSh_{\M}(\cat{C})$ and $\PSh_r(\cat{X})$ respectively. We now turn our attention to when
our categories may not necessarily be small, but locally small. In the case where $\cat{C}$ is an ordinary locally small category,
we understand the category of small presheaves on $\cat{C}$, denoted by $\P(\cat{C})$, to be its free cocompletion \cite{MR2324597}.
Recall that a presheaf on $\cat{C}$ is called small if it can be written as a small colimit of representables \cite{MR2324597}. 
We would like to first give a notion of free cocompletion of locally small $\M$-categories, and then give an analogue
in the restriction setting. To begin, we define what we mean by a locally small $\M$-category.

\begin{Defn}[Locally small $\M$-category]
	An $\M$-category $(\cat{C},\M_{\cat{C}})$ is called \emph{locally small} if $\cat{C}$ is locally small and 
	$\M$-well-powered. That is, for any object in $\cat{C}$, the $\M$-subobjects of $C$ form a small partially ordered set.
\end{Defn}

\begin{Rk}
	Note that this definition is exactly what is required for $\Par(\cat{C})$ to be a locally small restriction category when $\cat{C}$ is
	a locally small $\M$-category, as noted by Robinson and Rosolini \cite[p.~99]{MR968102}.
\end{Rk}

Now we know when $\cat{C}$ is an ordinary locally small category, $\P(\cat{C})$ is its free cocompletion. We also know that
for any small $\M$-category $(\cat{C},\M_{\cat{C}})$, its free cocompletion is given by 
$\PSh_{\M}(\cat{C}) = (\PSh(\cat{C}),\M_{\PSh(\cat{C})})$. This suggests that for any locally small $\M$-category
$\cat{C}$, we take $\P(\cat{C})$ as our base category and take its corresponding system of monics to be $\M_{\P(\cat{C})}$,
where $\M_{\P(\cat{C})}$ is defined in exactly the same way as for $\M_{\PSh(\cat{C})}$. Call this pair 
$(\P(\cat{C}), \M_{\P(\cat{C})}) = \P_{\M}(\cat{C})$. However, it is not immediately obvious that $\P_{\M}(\cat{C})$ is
an $\M$-category, since $\M_{\P(\cat{C})}$ may not be a stable system of monics. We therefore begin by showing that
$\M_{\P(\cat{C})}$ is stable.

\begin{Lemma}\label{PbInPPC}
	Let $\cat{C}$ be a locally small $\M$-category, and let $\mu\colon P \To Q$ be a map in $\M_{\P(\cat{C})}$. 
	If $\gamma\colon Q' \To Q$ is a map in $\PSh(\cat{C})$ with $Q'$ a small presheaf, then the pullback of $\mu$ along 
	$\gamma$ is in $\M_{\P(\cat{C})}$.
		\begin{center}\begin{tikzcd}
			P' \arrow{r} \arrow[swap]{d}{\mu'} & P \arrow{d}{\mu} \\
			Q' \arrow[swap]{r}{\gamma} & Q
		\end{tikzcd}\end{center}
\end{Lemma}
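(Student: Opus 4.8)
The plan is to reduce the statement about small presheaves to the corresponding fact about representables, which is exactly the definition of an $\M_{\P(\cat{C})}$-map. Recall that $\mu \colon P \To Q$ lies in $\M_{\P(\cat{C})}$ precisely when, for every $\gamma' \colon \yon D \To Q$ with $D \in \cat{C}$, there is an $m \in \M_{\cat{C}}$ making the comparison square over $\gamma'$ a pullback. To show the pulled-back map $\mu' \colon P' \To Q'$ is again in $\M_{\P(\cat{C})}$, I must verify the analogous property for $\mu'$: given an arbitrary $\delta \colon \yon D \To Q'$ with $D \in \cat{C}$, I need to produce $n \in \M_{\cat{C}}$ with $\yon n$ a pullback of $\mu'$ along $\delta$.

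The key step is a pasting argument. First I would form the composite $\gamma \circ \delta \colon \yon D \To Q$; since $\mu \in \M_{\P(\cat{C})}$, there is an $m \in \M_{\cat{C}}$ and a map $\yon C \to P$ exhibiting $\yon m$ as the pullback of $\mu$ along $\gamma \delta$. Then I would invoke the two-pullback lemma (pullback pasting) on the diagram
\begin{center}\begin{tikzcd}
\yon C \arrow{r} \arrow[swap]{d}{\yon m} & P' \arrow{r} \arrow{d}{\mu'} & P \arrow{d}{\mu} \\
\yon D \arrow[swap]{r}{\delta} & Q' \arrow[swap]{r}{\gamma} & Q
\end{tikzcd}\end{center}
in $\PSh(\cat{C})$: the outer rectangle is a pullback (by choice of $m$) and the right square is a pullback (by construction of $P'$), so the left square — the comparison square for $\mu'$ over $\delta$ — is a pullback as well. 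Taking $n = m$ (and noting $n \in \M_{\cat{C}}$) then witnesses the required property, so $\mu' \in \M_{\P(\cat{C})}$.

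There is one genuine point to check beyond the formal pasting: I must confirm that $P'$, the pullback of $\mu$ along $\gamma$ formed in $\PSh(\cat{C})$, is actually a \emph{small} presheaf, so that $\mu'$ is a morphism of $\P(\cat{C})$ in the first place. This is where local smallness of $\cat{C}$ and $\M$-well-poweredness are used. I expect to argue that $P'$ can be computed as a small colimit: write $Q' \cong \colim \yon D_i$ as a small colimit of representables (possible since $Q'$ is small), pull $\mu$ back along each coprojection $\yon D_i \To Q$ to get $\yon m_i$ with $m_i \in \M_{\cat{C}}$ — here $\M$-well-poweredness guarantees there is only a set of such $\M$-subobjects to worry about, keeping the indexing data small — and then show $P' \cong \colim \yon C_i$ using that colimits in $\PSh(\cat{C})$ are stable under pullback. (This mirrors the reindexing construction in the proof of Corollary~\ref{MCatLem3Cor}.) The main obstacle, then, is not the pasting lemma but assembling this colimit presentation of $P'$ carefully enough to conclude smallness; once that is in hand, the pullback-pasting step delivers membership in $\M_{\P(\cat{C})}$ immediately.
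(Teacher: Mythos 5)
Your proposal is correct and follows essentially the same route as the paper: the paper likewise disposes of the pullback-pasting part by citing that $\PSh_{\M}(\cat{C})$ is already an $\M$-category, and then devotes the proof to showing $P'$ is small by writing $Q' \cong \colim \yon D_I$, pulling $\mu$ back along each $\gamma \circ q_I$ to get $\yon m_I$, and invoking stability of colimits under pullback in $\PSh(\cat{C})$. The only quibble is your parenthetical appeal to $\M$-well-poweredness to keep the indexing small --- the index category is small simply because $Q'$ admits a small colimit presentation, so that remark is unnecessary rather than wrong.
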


\begin{proof}
	Certainly $\mu'$ exists and is in $\M_{\PSh(\cat{C})}$ by the fact $\PSh_{\M}(\cat{C})$ is an $\M$-category. So all we need 
	to show is that $P'$ is a small presheaf. Since $Q'$ is small, we may rewrite $Q' \cong \colim \yon D$ for some functor 
	$D\colon \cat{I}\to\cat{C}$ with $\cat{I}$ small, and denote the colimiting coprojections as $q_I \colon \yon D_I \to Q'$. Now 
	$\mu$ is an $\M_{\P(\cat{C})}$-map, which means that for each $I\in\cat{I}$ and composite $\gamma\circ q_I$, there exists an
	$m_I\colon C_I \to D_I$ making the outer square a pullback.
		\begin{center}\begin{tikzcd}
			\yon C_I \arrow{r}{p_I} \arrow[swap]{d}{\yon m_I} & P' \arrow[swap]{d}{\mu'} \arrow{r} & P \arrow{d}{\mu} \\
			\yon D_I \arrow[swap]{r}{q_I} & Q' \arrow[swap]{r}{\gamma} & Q
		\end{tikzcd}\end{center}
	By the same argument as in the proof of Theorem \ref{MCatCocomp}, it follows that there is a functor $C \colon \cat{I}\to\cat{C}$ 
	which on objects, takes $I$ to $C_I$, and that there is a unique map $p_I \colon \yon C_I \to P'$ making the left square a pullback 
	for every $I\in\cat{I}$. However, because colimits are stable under pullback in $\PSh(\cat{C})$, this means 
	$(p_I\colon \yon C_I \to P')_{I\in\cat{I}}$ is colimiting, which ensures that $P'$ is a small presheaf.
\end{proof}

\begin{Rk}\label{PbInPPCRk}
	Note that the previous result implies that $\P(\cat{C})$ admits pullbacks along $\M_{\P(\cat{C})}$-maps, and that these
	are computed pointwise.
\end{Rk}

Having now shown that $\M_{\P(\cat{C})}$ is a stable system of monics, and hence $\P_{\M}(\cat{C})$ is an 
$\M$-category, we claim that $\P_{\M}(\cat{C})$ is indeed the free cocompletion of $\cat{C}$. To do so however, will first 
require showing that $\P_{\M}(\cat{C})$ is both locally small and cocomplete.

\begin{Lemma}
	If $\cat{C}$ is a locally small $\M$-category, then $\P_{\M}(\cat{C})$ is locally small.
\end{Lemma}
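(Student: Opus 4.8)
The plan is to verify the two requirements in the definition of a locally small $\M$-category separately: that the underlying category $\P(\cat{C})$ is locally small, and that $\P_{\M}(\cat{C})$ is $\M$-well-powered.

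Local smallness of $\P(\cat{C})$ is recorded in \cite{MR2324597}, and I would give the short direct argument: if $P$ and $Q$ are small presheaves and $P \cong \colim_{I \in \cat{I}} \yon D_I$ for a functor $D \colon \cat{I} \to \cat{C}$ with $\cat{I}$ small, then since hom-functors carry colimits in their first variable to limits and the Yoneda lemma identifies $\PSh(\cat{C})(\yon D_I, Q)$ with $Q(D_I)$, we obtain
	$$ \PSh(\cat{C})(P, Q) \cong \lim_{I \in \cat{I}} Q(D_I), $$
a small limit of small sets, hence a small set; as $\P(\cat{C})$ is a full subcategory of $\PSh(\cat{C})$, this suffices.

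For $\M$-well-poweredness, fix a small presheaf $P$ together with a presentation $P \cong \colim_{I \in \cat{I}} \yon D_I$ with $\cat{I}$ small and colimit coprojections $p_I \colon \yon D_I \to P$. The main construction is an injection
	$$ \Theta \colon \Sub_{\M_{\P(\cat{C})}}(P) \longrightarrow \prod_{I \in \ob \cat{I}} \Sub_{\M_{\cat{C}}}(D_I). $$
Given an $\M_{\P(\cat{C})}$-subobject $\mu \colon R \to P$, the defining property of $\M_{\P(\cat{C})}$-maps applied to the map $p_I$ from the representable $\yon D_I$ yields an $m_I \in \M_{\cat{C}}$ with $\yon m_I$ a pullback of $\mu$ along $p_I$; via Lemma \ref{MSubRep} this determines a class $[m_I] \in \Sub_{\M_{\cat{C}}}(D_I)$ not depending on the chosen $m_I$ nor on the representative of the subobject $\mu$. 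Set $\Theta([\mu]) = ([m_I])_{I \in \ob\cat{I}}$.

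The crux is injectivity of $\Theta$, for which I would transcribe the reconstruction already present in the proof of Theorem \ref{MCatCocomp}. Because colimits in $\PSh(\cat{C})$ are stable under pullback, the maps $\yon m_I$ are the legs of a colimiting cocone over $R$; moreover the $C_I \defeq \dom m_I$ form a functor $C \colon \cat{I} \to \cat{C}$ in which, for $f \colon I \to J$, the morphism $Cf$ is the unique arrow with $m_J \circ Cf = Df \circ m_I$ (it exists because $\mu$ does, and is unique because $m_J$ is monic), and $\mu$ is recovered up to isomorphism over $P$ as $\colim_{I} \yon m_I$. Every ingredient here depends only on the family $([m_I])_I$ and the fixed presentation of $P$, so two subobjects of $P$ with the same $\Theta$-value coincide, proving $\Theta$ injective. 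Since $\cat{C}$ is $\M$-well-powered each $\Sub_{\M_{\cat{C}}}(D_I)$ is small, and $\ob\cat{I}$ is small, so the target of $\Theta$, and hence $\Sub_{\M_{\P(\cat{C})}}(P)$, is small. Combined with local smallness of $\P(\cat{C})$, this shows $\P_{\M}(\cat{C})$ is a locally small $\M$-category. I expect the only fiddly part to be checking the various "independent of choices" assertions that make $\Theta$ well-defined and injective on isomorphism classes; everything else is a direct application of results already in the paper.
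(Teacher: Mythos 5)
Your proof is correct and follows essentially the same route as the paper: pull the given $\M$-subobject back along the colimit coprojections $q_I$ to obtain a family of $\M_{\cat{C}}$-subobjects of the $D_I$, and recover the subobject from that family using stability of colimits under pullback in $\PSh(\cat{C})$. The only difference is that the paper records the family as a single $\M$-subobject of $D$ in the functor $\M$-category $\cat{C}^{\cat{I}}$ rather than as an element of $\prod_{I}\Sub_{\M_{\cat{C}}}(D_I)$; your coarser invariant still suffices because the comparison isomorphisms $\phi_I$ with $m'_I\phi_I=m_I$ are unique (the $m'_I$ being monic) and hence automatically assemble into a natural isomorphism $C\cong C'$ over $D$ --- this is exactly the ``independent of choices'' point you flagged, and it is worth making explicit when verifying injectivity.
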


\begin{proof}
	It is well-known that $\P(\cat{C})$ is a locally small category \cite{MR2324597}, so all that remains is to show that
	$\P_{\M}(\cat{C})$ is $\M$-well-powered. So let $Q$ be a small presheaf, and rewrite $Q \cong \colim \yon D$, where 
	$D\colon \cat{I}\to\cat{C}$ is a functor with $\cat{I}$ small. Again denote the colimiting coprojections by
	$(q_I\colon \yon D_I \to Q)_{I\in\cat{I}}$.
	
	As before, if $\mu \colon P \To Q$ is an $\M$-subobject of $Q$, then there is a functor $C \colon \cat{I}\to\cat{C}$, 
	which on objects, takes $I \to C_I$, and unique maps $(p_I\colon \yon C_I \to P)_{I\in\cat{I}}$ making the following squares 
	pullbacks for each $I\in\cat{I}$:
		\begin{center}\begin{tikzcd}
			\yon C_I \arrow{r}{p_I} \arrow[swap]{d}{\yon m_I} & P \arrow{d}{\mu} \\
			\yon D_I \arrow[swap]{r}{q_I} & Q
		\end{tikzcd}\end{center}
	Note that $P\cong \colim \yon C$ as colimits are stable under pullback in $\P(\cat{C})$. There is also a natural 
	transformation $\alpha\colon C\To D$, given componentwise on $I$ by $m_I \in\M_{\cat{C}}$ 
	and whose naturality squares are pullbacks for every $I\in\cat{I}$. In fact, these functors from $\cat{I}$ to $\cat{C}$ 
	form an $\M$-category $(\cat{C}^{\cat{I}},\M_{\cat{C}^{\cat{I}}})$ whose $\M_{\cat{C}^{\cat{I}}}$-maps
	are just the natural transformations whose components are $\M_{\cat{C}}$-maps. Note that by observation,
	this $\M$-category $(\cat{C}^{\cat{I}},\M_{\cat{C}^{\cat{I}}})$ is locally small.
	
	It is not then difficult to see there is a function 
	$f \colon \Sub_{\M_{\P(\cat{C})}}(Q) \to \Sub_{\M_{\cat{C}^{\cat{I}}}}(D)$ taking the $\M$-subobjects of $Q$ to 
	the $\M$-subobjects of $D$. So to show that $\P_{\M}(\cat{C})$ is $\M$-well-powered, it is enough to show that $f$ is injective. 
	Let $\mu\colon P\To Q$ and $\mu\colon P'\To Q$ be two $\M$-subobjects of $Q$ which are mapped to the same 
	$\M$-subobject of $D$. That is, there is an isomorphism from $C$ to $C'$ making the following diagram commute:
		\begin{center}\begin{tikzcd}[row sep=small,column sep=small]
			C \arrow{rr}{\cong} \arrow[swap]{dr}{\alpha} & & C' \arrow{dl}{\alpha'} \\
			& D & 
		\end{tikzcd}\end{center}
	But because $P \cong \colim \yon C \cong \colim \yon C' \cong P'$, this induces an isomorphism between $P$ and $P'$ making
	the following diagram commute:
		\begin{center}\begin{tikzcd}[row sep=small,column sep=small]
			\yon C_I \arrow{rr}{p_I} \arrow{dr}{\cong} \arrow[swap]{dd}{\yon \alpha_I} & 
						& P \arrow[near end]{dd}{\mu} \arrow[dashed]{dr}{\cong} & \\
			& \yon C'_I \arrow{dl}{\yon \alpha'_I} \arrow[crossing over,near start]{rr}{p'_I} & & P' \arrow{dl}{\mu'} \\
			\yon D_I \arrow[swap]{rr}{q_I} & & Q &
		\end{tikzcd}\end{center}
	In other words, $\mu$ and $\mu'$ are the same $\M$-subobject of $Q$, and so the function $f$ is injective.
	Hence, if $\cat{C}$ is a locally small $\M$-category, then so is $\P_{\M}(\cat{C})$.
\end{proof}

Next, to show that $\P_{\M}(\cat{C})$ is cocomplete, we exploit Proposition \ref{MCocompAlt} and the following 
two lemmas.

\begin{Lemma}\label{PCCocomp1}
	Let $\cat{C}$ be a locally small $\M$-category. If $\{\mu_i \colon P_i \to Q_i \}_{i\in I}$ is a family of maps in 
	$\M_{\P(\cat{C})}$ indexed by a small set $I$, then so is their coproduct $\sum_{i\in I} \mu_i$.
\end{Lemma}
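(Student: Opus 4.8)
The plan is to verify directly the two conditions in the definition of an $\M_{\P(\cat{C})}$-map for $\sum_{i\in I}\mu_i$: that it is a natural transformation between \emph{small} presheaves, and that for every $\gamma\colon\yon D\To\sum_{i\in I}Q_i$ with $D\in\cat{C}$ there is an $m\in\M_{\cat{C}}$ making the corresponding square a pullback. First I would dispose of smallness: each $P_i$ and $Q_i$ is small, being the domain or codomain of a map in $\M_{\P(\cat{C})}$, and a small coproduct of small presheaves is small, since writing each $P_i$ as a small colimit of representables presents $\sum_{i\in I}P_i$ as a small colimit of representables, and likewise for $\sum_{i\in I}Q_i$. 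This is the one point at which the smallness of $I$ is used; without it $\sum_{i\in I}\mu_i$ would not even be a morphism of $\P(\cat{C})$. (That $\sum_{i\in I}\mu_i$ is moreover monic follows from coproducts in $\PSh(\cat{C})$ being disjoint and computed pointwise, so that a coproduct of monomorphisms is a monomorphism; but only the pullback condition is needed to land in $\M_{\P(\cat{C})}$.)

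The heart of the argument is the observation that a map out of a representable into a coproduct factors through a single coprojection. Given $\gamma\colon\yon D\To\sum_{i\in I}Q_i$, the Yoneda lemma identifies it with an element of $(\sum_{i\in I}Q_i)(D)=\coprod_{i\in I}Q_i(D)$, which lies in the image of $Q_j(D)$ for a unique $j\in I$; the resulting element of $Q_j(D)$ corresponds under Yoneda to a map $\gamma_j\colon\yon D\To Q_j$ with $\gamma=\imath_{Q_j}\circ\gamma_j$, where $\imath_{Q_j}\colon Q_j\To\sum_{i\in I}Q_i$ is the $j$-th coprojection.

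Now I would apply the hypothesis that $\mu_j\in\M_{\P(\cat{C})}$ to the map $\gamma_j$, obtaining $m\in\M_{\cat{C}}$, say $m\colon C\to D$, together with a pullback square whose left and right edges are $\yon m$ and $\mu_j$ and whose bottom edge is $\gamma_j$. Paste this square, along its right edge $\mu_j$, onto the coprojection square with left edge $\mu_j$, right edge $\sum_{i\in I}\mu_i$, and horizontal edges $\imath_{Q_j}$ and $\imath_{P_j}$. That coprojection square is a pullback because presheaf categories are infinitary extensive: coproducts are disjoint and stable under pullback, so the pullback of $\sum_{i\in I}\mu_i$ along a coprojection $\imath_{Q_j}$ is (canonically) $\mu_j$. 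By the pasting lemma the resulting rectangle is a pullback; its left edge is $\yon m$ with $m\in\M_{\cat{C}}$, its right edge is $\sum_{i\in I}\mu_i$, and its bottom edge is $\imath_{Q_j}\circ\gamma_j=\gamma$. Since $\gamma$ was arbitrary, this exhibits $\sum_{i\in I}\mu_i$ as an $\M_{\P(\cat{C})}$-map.

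I do not anticipate a genuine obstacle: beyond the formal pasting of pullbacks, the only inputs are the pointwise computation of coproducts of presheaves and the extensivity of $\PSh(\cat{C})$, both standard. This lemma supplies the first half of condition (2a) of Proposition~\ref{MCocompAlt} for $\P_{\M}(\cat{C})$; the companion statement that the coprojection squares for $\sum_{i\in I}\mu_i$ are pullbacks (which the pasting argument above in fact already records, being exactly the extensivity input used), together with conditions (2b) and (2c), is what the subsequent lemmas will address in order to conclude that $\P_{\M}(\cat{C})$ is cocomplete.
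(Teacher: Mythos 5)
Your proposal is correct and follows essentially the same route as the paper's proof: factor $\gamma$ through a single coprojection via Yoneda, use extensivity of the presheaf category to identify the pullback of $\sum_{i\in I}\mu_i$ along $\imath_{Q_j}$ with $\mu_j$, and paste pullback squares. The only addition is your explicit check that the coproduct of small presheaves is small, which the paper leaves implicit.
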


\begin{proof}
	Let $\{\mu_i \colon P_i \to Q_i \}_{i\in I}$ be a family of maps in $\M_{\P(\cat{C})}$, with $I$ some small set. To show 
	that $\sum_{i\in I} \mu_i$ is also in $\M_{\P(\cat{C})}$, we need to show that for any 
	$h\colon \yon D \to \sum_{i\in I} Q_i$, there is a map $m\colon C\to D$ in $\M_{\cat{C}}$ making
	the following diagram a pullback:
		\begin{center}\begin{tikzcd}
			\yon C \arrow{r} \arrow[swap]{d}{\yon m} & \sum_{i\in I}P_i \arrow{d}{\sum_{i\in I} \mu_i} \\
			\yon D \arrow[swap]{r}{h} & \sum_{i\in I} Q_i
		\end{tikzcd}\end{center}
	By Yoneda, there is a bijection 
		$$ \P(\cat{C})\left(\yon D,\sum_{i\in I} Q_i \right)\cong \left(\sum_{i\in I} Q_i \right)(D) \cong \sum_{i\in I} Q_i D, $$ 
	meaning that $h$ corresponds uniquely with some element $\tilde{h} \in \sum_{i\in I} Q_i D$. For each $i\in I$, the 
	naturality of the bijection $\P(\cat{C})(\yon D,Q_i)\cong Q_i D$ implies that $h\colon \yon D \to \sum_{i\in I} Q_i$ factors 
	through exactly one of the coproduct injections $\imath_{Q_j} \colon Q_j \to \sum_{i\in I} Q_i$. By extensivity of the 
	presheaf category $\PSh(\cat{C})$, the pullback of $\sum_{i\in I}\mu_i$ along $\imath_{Q_i}$ must be $\mu_i$. However, 
	as $\mu_j$ is an $\M_{\P(\cat{C})}$-map, there exists an $m\colon C\to D$ in $\M_{\cat{C}}$ making the left square of 
	the following diagram commute:
		\begin{center}\begin{tikzcd}
			\yon C \arrow{r} \arrow[swap]{d}{\yon m} & P_j \arrow{r}{\imath_{P_j}} \arrow[swap]{d}{\mu_j} 
					& \sum_{i\in I} P_i \arrow{d}{\sum_{i\in I} \mu_i} \\
			\yon D \arrow{r}{h'} \arrow[swap,bend right=20]{rr}{h} & Q_j \arrow{r}{\imath_{Q_j}} & \sum_{i\in I} Q_i
		\end{tikzcd}\end{center}
	Therefore, as both squares are pullbacks, $\yon m$ is a pullback of $\sum_{i\in I}\mu_i$ along $h$, which means 
	$\sum_{i\in I} \mu_i \in\M_{\P(\cat{C})}$.
\end{proof}

\begin{Lemma}\label{PCCocomp2}
	Let $\cat{C}$ be a locally small $\M$-category, and suppose $m$ is a map in $\P(\cat{C})$. If the pullback of $m$ along 
	some epimorphism is an $\M_{\P(\cat{C})}$-map, then $m$ must also be in $\M_{\P(\cat{C})}$.
\end{Lemma}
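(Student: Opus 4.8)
The plan is to verify the defining property of an $\M_{\P(\cat{C})}$-map for $m$ directly, bootstrapping from the fact that it already holds for the pullback of $m$ along the given epimorphism. Write $e\colon Q'\To Q$ for this epimorphism (so $Q'$ is a small presheaf, since $m'$ is an $\M_{\P(\cat{C})}$-map), let $m\colon P\To Q$ be the map in question, and let $m'\colon P'\To Q'$ be the pullback of $m$ along $e$, which by hypothesis lies in $\M_{\P(\cat{C})}$. To conclude that $m\in\M_{\P(\cat{C})}$ I must show that for every $\gamma\colon\yon D\To Q$ there is some $k\in\M_{\cat{C}}$ whose image $\yon k$ occurs as a pullback of $m$ along $\gamma$.

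The first step is to note that $\gamma$ factors through $e$. Since the inclusion $\P(\cat{C})\hookrightarrow\PSh(\cat{C})$ preserves colimits, evaluation at $D$ is a cocontinuous functor $\P(\cat{C})\to\Set$, and therefore sends the epimorphism $e$ to a surjection $e_D\colon Q'D\to QD$. Now $\gamma$ names an element $x\in QD$ by Yoneda; picking $x'\in Q'D$ with $e_D x'=x$ produces a lift $\gamma'\colon\yon D\To Q'$ with $e\gamma'=\gamma$.

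The second step is pullback pasting. Because $m'\in\M_{\P(\cat{C})}$, the pullback of $m'$ along $\gamma'$ has the form
\begin{center}\begin{tikzcd}
	\yon C \arrow{r} \arrow[swap]{d}{\yon k} & P' \arrow{d}{m'} \\
	\yon D \arrow[swap]{r}{\gamma'} & Q'
\end{tikzcd}\end{center}
for some $k\in\M_{\cat{C}}$. Pasting this square onto the defining pullback square of $m'$ over $e$ yields a pullback rectangle whose bottom edge is $e\gamma'=\gamma$ and whose left edge is $\yon k$, i.e.\ a pullback of $m$ along $\gamma$ of exactly the required form. As $\gamma$ was arbitrary, $m\in\M_{\P(\cat{C})}$.

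The argument carries no serious difficulty, and in particular one does not need to check separately that $m$ is monic: $\M_{\P(\cat{C})}$-maps are automatically monic straight from the pullback condition (test against representables, which form a generator). The one point that deserves care is the lift of $\gamma$ along $e$ in the first step, which rests on the fact that epimorphisms in $\P(\cat{C})$ are pointwise surjective; this is where cocontinuity of the inclusion $\P(\cat{C})\hookrightarrow\PSh(\cat{C})$ is genuinely used. One could instead pull $e$ back along $\gamma$ to get an epimorphism $W\to\yon D$, but then one would have to argue that $W$ is a small presheaf, so passing to the lift of $\gamma$ is the cleaner route.
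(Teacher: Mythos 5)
Your proof is correct and follows essentially the same route as the paper's: lift the test map $\gamma\colon\yon D\To Q$ through the epimorphism using pointwise surjectivity (via Yoneda), pull back $m'$ along the lift to get a representable $\M_{\cat{C}}$-square, and paste pullbacks. Your justification that the epimorphism is pointwise surjective (cocontinuity of evaluation, colimits in $\P(\cat{C})$ being pointwise) is slightly more explicit than the paper's, but the argument is the same.
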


\begin{proof}
	Let $m\colon P\To Q$ be a map in $\P(\cat{C})$, and suppose $m'\colon P'\To Q'$ is a pullback of $m$ along some 
	epimorphism $f\colon Q'\To Q$. To show that $m$ is an $\M_{\P(\cat{C})}$, let $g\colon \yon D \To Q$ be any map
	in $\P(\cat{C})$. Again by Yoneda, there is a bijection $\P(\cat{C})(\yon D,Q)\cong QD$, giving a corresponding element 
	$\tilde{g} \in QD$. Since $f$ is an epimorphism in $\P(\cat{C})$, its component at $D$, $f_D\colon Q'D\to QD$, must also 
	be an epimorphism, which means there exists some element $\tilde{f'}\in Q'D$ such that $f_D(\tilde{f'}) = \tilde{g}$. The 
	naturality of the bijection $\P(\cat{C})(\yon D,Q)\cong QD$ then implies there is a map $f' \colon \yon D \to Q'$ such that
	$g = ff'$. Now using the fact $m'$ is an $\M_{\P(\cat{C})}$-map, there exists a map $n\in\M_{\cat{C}}$ such that
	$\yon n$ is the pullback of $m'$ along $f'$.
		\begin{center}\begin{tikzcd}
			\yon C \arrow{r} \arrow[swap]{d}{\yon n} & P' \arrow{r} \arrow[swap]{d}{m'} & P \arrow{d}{m} \\
			\yon D \arrow{r}{f'} \arrow[swap,bend right=20]{rr}{g} & Q' \arrow{r}{f} & Q
		\end{tikzcd}\end{center}
	Then as both squares are pullbacks, $\yon n$ must be the pullback of $m$ along $g=ff'$, making $m$ an 
	$\M_{\P(\cat{C})}$-map.
\end{proof}

We now prove that $\P_{\M}(\cat{C})$ is indeed cocomplete as an $\M$-category.

\begin{Lemma}
	Let $(\cat{C},\M_{\cat{C}})$ be a locally small $\M$-category. Then $(\P(\cat{C}),\M_{\P(\cat{C})})$ is a 
	cocomplete $\M$-category.
\end{Lemma}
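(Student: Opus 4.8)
The plan is to apply Proposition~\ref{MCocompAlt}. We already know that $\M_{\P(\cat{C})}$ is a stable system of monics, so $\P_{\M}(\cat{C})$ is an $\M$-category, and $\P(\cat{C})$ is cocomplete \cite{MR2324597}; it therefore remains to verify conditions (2a), (2b) and (2c) of that proposition for $(\P(\cat{C}),\M_{\P(\cat{C})})$. Two bookkeeping facts will be used throughout: small colimits in $\P(\cat{C})$ (in particular small coproducts and coequalisers) are computed as in $\PSh(\cat{C})$, since the small presheaves are closed under small colimits; and pullbacks of $\M_{\P(\cat{C})}$-maps in $\P(\cat{C})$ are computed pointwise as in $\PSh(\cat{C})$, by Remark~\ref{PbInPPCRk} (cf.\ Lemma~\ref{PbInPPC}).

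For (2a), given a small family $\{\mu_i\colon P_i\to Q_i\}_{i\in I}$ in $\M_{\P(\cat{C})}$, Lemma~\ref{PCCocomp1} shows $\sum_i\mu_i\in\M_{\P(\cat{C})}$, and the coproduct coprojection squares are pullbacks because $\PSh(\cat{C})$ is extensive and both the coproducts and the pullback along $\sum_i\mu_i$ are formed there. For (2c), colimits in $\P(\cat{C})$ and pullbacks along $\M_{\P(\cat{C})}$-maps are all computed pointwise in $\Set$, where colimits are universal, so colimits in $\P(\cat{C})$ are stable under pullback along $\M_{\P(\cat{C})}$-maps.

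Condition (2b) is where the real work lies, and I expect it to be the main obstacle. Starting from $m\in\M_{\P(\cat{C})}$, the parallel pair $f,g$, their common pullback $h$ along $m$, the induced maps $f',g'$, and the coequalisers $c,c'$, I would first show that the right-hand square is a pullback in $\PSh(\cat{C})$. Since coequalisers and pullbacks in $\PSh(\cat{C})$ are computed pointwise, this reduces to an elementary statement in $\Set$: if $m$ is injective and each element $r$ of the domain of $f,g$ satisfies $f(r)\in\im m$ if and only if $g(r)\in\im m$, then the induced map of quotients is injective and the square is a pullback --- a direct argument on equivalence classes, the point being that a generator meeting $\im m$ has both feet in $\im m$, so that equivalence classes meeting $\im m$ stay inside it. The square is then a pullback in $\P(\cat{C})$ as well, and since $c$, being a coequaliser, is an epimorphism, this square exhibits $m\in\M_{\P(\cat{C})}$ as the pullback of $n$ along the epimorphism $c$; Lemma~\ref{PCCocomp2} then gives $n\in\M_{\P(\cat{C})}$, completing the verification. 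With (2a), (2b) and (2c) in hand, Proposition~\ref{MCocompAlt} shows the inclusion $\P(\cat{C})\hookrightarrow\Par(\P_{\M}(\cat{C}))$ preserves colimits, which together with cocompleteness of $\P(\cat{C})$ is precisely the assertion that $\P_{\M}(\cat{C})$ is a cocomplete $\M$-category.
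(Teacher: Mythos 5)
Your proposal is correct and follows essentially the same route as the paper: reduce to conditions (2a)--(2c) of Proposition~\ref{MCocompAlt}, invoke Lemmas~\ref{PCCocomp1} and~\ref{PCCocomp2}, and settle the pullback conditions pointwise in $\Set$ using that colimits and pullbacks along $\M_{\P(\cat{C})}$-maps are computed there. The only (harmless) divergence is in the $\Set$-level step of (2b), where you give a direct argument on equivalence classes while the paper instead cites the cocompleteness of $(\Set,\M_{\Set})$ from Example~\ref{SetM}, and in (2a), where you appeal to extensivity of $\PSh(\cat{C})$ rather than repeating the pointwise argument.
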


\begin{proof}
	We begin by noting that the category of small presheaves on $\cat{C}$, $\P(\cat{C})$, is cocomplete. Therefore, it remains to
	show that the inclusion $\P(\cat{C}) \hookrightarrow \Par(\P(\cat{C}),\M_{\P(\cat{C})})$ is cocontinuous. However, 
	by Proposition \ref{MCocompAlt}, it is enough to show that the following conditions hold:
		\begin{enumerate}[label=(\alph*)]
			\item If $\{m_i \colon P_i \To Q_i\}_{i\in I}$ is a family of small $I$-indexed set of maps in $\M_{\P(\cat{C})}$, then 
						$\sum_{i\in I} m_i$ is also in $\M_{\P(\cat{C})}$ and the following squares are pullbacks for each $i\in I$:
						\begin{center}\begin{tikzcd}
							P_i \arrow{r}{\imath_{P_i}} \arrow[swap]{d}{m_i} & \sum_{i\in I} P_i \arrow{d}{\sum_{i\in I} m_i} \\
							Q_i \arrow[swap]{r}{\imath_{Q_i}} & \sum_{i\in I} Q_i
						\end{tikzcd}\end{center}
			\item Given the following diagram,
						\begin{center}\begin{tikzcd}
							P' \arrow[start anchor=20,end anchor=160]{r}{f'} 
										\arrow[start anchor=base east,end anchor=base west,swap]{r}{g'} \arrow[swap]{d}{m'} 
									& P \arrow{r}{c'} \arrow{d}{m} & G \arrow{d}{n} \\
							Q' \arrow[start anchor=20,end anchor=160]{r}{f} 
										\arrow[start anchor=base east,end anchor=base west,swap]{r}{g} & Q \arrow[swap]{r}{c} & H
						\end{tikzcd}\end{center}
					if $m\in\M_{\P(\cat{C})}$ and the left two squares are pullbacks, and $c,c'$ are the coequalisers of 
					$f,g$ and $f',g'$ respectively, then the unique map $n$ making the right square commute is in
					$\M_{\P(\cat{C})}$ and the right square is also a pullback.
			\item Colimits in $\P(\cat{C})$ are stable under pullback along $\M_{\P(\cat{C})}$-maps.
		\end{enumerate}
	To see that (c) holds, recall that $\P(\cat{C})$ admits pullbacks along $\M_{\P(\cat{C})}$-maps, and that these 
	are calculated pointwise as in $\Set$ (Remark \ref{PbInPPCRk}). The result then follows from the fact that colimits in 
	$\P(\cat{C})$ are also calculated pointwise together with the fact colimits are stable under pullback in $\Set$.
	
	For (b), it will be enough to show that the square on the right in (b) is a pullback (by Lemma \ref{PCCocomp2}). 
	Now the right square is a pullback in $\P(\cat{C})$ if and only if componentwise for every $A\in\cat{C}$, it is a pullback in 
	$\Set$. So consider the diagram in (b) componentwise at $A\in\cat{C}$: 
		\begin{center}\begin{tikzcd}
			P'A \arrow[start anchor=20,end anchor=160]{r}{f'_A} 
						\arrow[start anchor=base east,end anchor=base west,swap]{r}{g'_A} \arrow[swap]{d}{m'_A} 
					& PA \arrow{r}{c'_A} \arrow{d}{m_A} & GA \arrow{d}{n_A} \\
			Q'A \arrow[start anchor=20,end anchor=160]{r}{f_A} 
						\arrow[start anchor=base east,end anchor=base west,swap]{r}{g_A} & QA \arrow[swap]{r}{c_A} & HA
		\end{tikzcd}\end{center}
	The two left squares remain pullbacks in $\Set$, and $c_A,c'_A$ remain coequalisers of $f_A,g_A$ and $f'_A,g'_A$ 
	respectively since colimits in $\P(\cat{C})$ are calculated pointwise. Observe also that $m_A$ is a monomorphism
	as maps between small presheaves in $\P(\cat{C})$ are monic if and only if they are componentwise monic for every
	$A\in\cat{C}$ (by a Yoneda argument). Now we know that the $\M$-category $(\Set,\M_{\Set})$ (where $\M_{\Set}$ 
	are all the injective functions) is a cocomplete $\M$-category (Example \ref{SetM}), 
	and since $m_A$ is monic, the square on the right must be a pullback in $\Set$. Therefore, as pullbacks in $\P(\cat{C})$ 
	are calculated pointwise, the square on the right of (b) must  also be a pullback.
	
	For (a), we know that $\sum_{i\in I}m_I \in M_{\P(\cat{C})}$ from Lemma \ref{PCCocomp1}. Then, as
	$(\Set,\M_{\Set})$ is cocomplete and both pullbacks and colimits in $\P(\cat{C})$ are computed pointwise as in $\Set$,
	the result follows by an analogous argument to (b).
	
	Therefore, $(\P(\cat{C}),\M_{\P(\cat{C})})$ is a cocomplete $\M$-category.
\end{proof}

\begin{Thm}{(Free cocompletion of locally small $\M$-categories)}
	Let $\cat{C}$ be a locally small $\M$-category, and let $\cat{D}$ be a locally small, cocomplete $\M$-category. Then 
	the following is an equivalence of categories:
		$$ (-)\circ\yon \colon \M\cat{Cocomp}(\P_{\M}(\cat{C}),\cat{D}) \to \M\CAT(\cat{C},\cat{D}) $$
	where $\M\CAT$ is the $2$-category of locally small $\M$-categories.
\end{Thm}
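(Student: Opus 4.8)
The plan is to transplant the proof of Theorem~\ref{MCatCocomp} to the locally small setting, with the classical equivalence $(-)\circ\yon\colon\cat{Cocomp}(\P(\cat{C}),\cat{D})\to\CAT(\cat{C},\cat{D})$ for a locally small category $\cat{C}$ and a cocomplete category $\cat{D}$ \cite{MR2324597} playing the role that ordinary free cocompletion played there. Since the preceding lemmas establish that $\P_{\M}(\cat{C})$ is a locally small cocomplete $\M$-category, the comparison functor $(-)\circ\yon$ is well defined, and it remains to prove it is essentially surjective on objects and fully faithful.

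For essential surjectivity, an $\M$-functor $F\colon\cat{C}\to\cat{D}$ yields, by the classical result, a cocontinuous $G\colon\P(\cat{C})\to\cat{D}$ with $G\yon\cong F$, and the task is to upgrade $G$ to an $\M$-functor. I would take $\mu\colon P\To Q$ in $\M_{\P(\cat{C})}$, write $Q\cong\colim\yon D$ for a small $D\colon\cat{I}\to\cat{C}$ with coprojections $q_I$, and use the defining property of $\M_{\P(\cat{C})}$ to obtain $m_I\colon C_I\to D_I$ in $\M_{\cat{C}}$ with $\yon m_I$ a pullback of $\mu$ along $q_I$. As in Theorem~\ref{MCatCocomp}, these assemble into a small diagram $C\colon\cat{I}\to\cat{C}$ together with a natural transformation $\yon C\To\yon D$ all of whose components lie in $\M_{\P(\cat{C})}$ and all of whose naturality squares are pullbacks; here one invokes Lemma~\ref{PbInPPC} to know the pullback $P$ stays \emph{small}, and Remark~\ref{PbInPPCRk} together with pointwise stability of colimits to know $(\yon C_I\to P)_{I\in\cat{I}}$ is colimiting. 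Applying $G$ and using its cocontinuity, $G\yon\cong F$, and that $F$ is an $\M$-functor, Lemma~\ref{MCatLem2} gives $G\mu\in\M_{\cat{D}}$ and shows $G$ carries the pullback square with sides $\yon m_I$, $q_I$ and $\mu$ to a pullback for every $I\in\cat{I}$; a further application of Lemma~\ref{MCatLem3}, verbatim from Theorem~\ref{MCatCocomp}, then shows $G$ preserves pullbacks of arbitrary $\M_{\P(\cat{C})}$-maps. Thus $G$ is a cocontinuous $\M$-functor lifting $F$.

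For full faithfulness I would use the same reduction as at the end of Theorem~\ref{MCatCocomp}: for cocontinuous $\M$-functors $F,F'\colon\P_{\M}(\cat{C})\to\cat{D}$, the commuting square with corners $\M\Nat(F,F')$, $\M\Nat(F\yon,F'\yon)$, $\Nat(F,F')$ and $\Nat(F\yon,F'\yon)$ is a pullback in $\Set$, and since its bottom edge is a bijection (classical free cocompletion) it suffices to prove that a natural transformation $\tilde{\alpha}\colon F\To F'$ is $\M_{\P(\cat{C})}$-cartesian whenever $\tilde{\alpha}\yon$ is $\M_{\cat{C}}$-cartesian. This is precisely the diagram chase from Theorem~\ref{MCatCocomp}: for $\mu\colon P\To Q$ in $\M_{\P(\cat{C})}$ one combines naturality of $\tilde{\alpha}$ on the coprojections $\yon C_I\to P$, cartesianness of $\tilde{\alpha}\yon$, the fact (established in the first half) that $F'$ preserves the relevant pullbacks out of representables over $Q$, and Lemma~\ref{MCatLem3} to conclude the naturality square of $\tilde{\alpha}$ at $\mu$ is a pullback.

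The only genuine departure from the small case, and hence the step requiring care, is the smallness bookkeeping: each presheaf occurring in these chases---$Q\cong\colim\yon D$, the pullback $P$, and the pullbacks of $\M_{\P(\cat{C})}$-maps along maps out of representables---must be checked to lie in $\P(\cat{C})$ rather than merely in $\PSh(\cat{C})$, so that the whole argument takes place inside $\P_{\M}(\cat{C})$. This is exactly what Lemma~\ref{PbInPPC} provides, and once it is applied at these points the reasoning is formally identical to that of Theorem~\ref{MCatCocomp}.
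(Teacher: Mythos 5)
Your proposal is correct and takes essentially the same approach as the paper, whose own proof consists of the single remark that the arguments of Theorem~\ref{MCatCocomp} carry over verbatim. Your additional attention to the smallness bookkeeping (via Lemma~\ref{PbInPPC} and the pointwise computation of pullbacks and colimits in $\P(\cat{C})$) is exactly the point that makes that transfer legitimate.
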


\begin{proof}
	The proof follows exactly the same arguments presented in the proof of Theorem \ref{MCatCocomp}.
\end{proof}

\begin{Cor}{(Free cocompletion of locally small restriction categories)}
	For any locally small restriction category $\cat{X}$ and locally small, cocomplete restriction category $\E$, the following is 
	an equivalence of categories:
		$$ (-)\circ \eqref{CL} \colon \cat{rCocomp}(\Par(\P_{\M}(\M\Total(\msf{K}_r(\cat{X})))),\E) \to \cat{rCAT}(\cat{X},\E) $$
	where \eqref{CL} is the Cockett and Lack embedding and $\cat{rCAT}$ is the $2$-category of locally small
	restriction categories.
\end{Cor}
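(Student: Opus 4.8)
The plan is to run the argument from the proof of the Theorem ``Free cocompletion of restriction categories'' verbatim, with $\PSh_{\M}$ replaced by $\P_{\M}$ and each ingredient of the small case replaced by its locally small counterpart. Write $\cat{C} = \M\Total(\msf{K}_r(\cat{X}))$. First I would observe that $\cat{C}$ is a locally small $\M$-category: $\msf{K}_r(\cat{X})$ has small hom-sets (its morphisms $(A,e)\to(A',e')$ form a subset of $\cat{X}(A,A')$) and, being split, its restriction monics into a fixed object correspond to restriction idempotents there, hence to a subset of an endo-hom-set, so $\msf{K}_r(\cat{X})$ is $\M$-well-powered in the restriction sense and $\cat{C}$ is $\M$-well-powered as an $\M$-category. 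By the Lemmas of this section, $\P_{\M}(\cat{C})$ is then a locally small, cocomplete $\M$-category, so $\Par(\P_{\M}(\cat{C}))$ is a cocomplete, hence split, restriction category and all the terms in the statement make sense.

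Next I would reduce to the $\M$-categorical statement. Since $\E$ is a cocomplete restriction category it is split, so $\E\cong\Par(\cat{D})$ for $\cat{D}\simeq\M\Total(\E)$; here $\cat{D}$ is locally small because $\E$ is (its hom-sets and its sets of restriction idempotents embed into hom-sets of $\E$), and $\cat{D}$ is a cocomplete $\M$-category by the observation recorded just after the definition of cocomplete restriction category. Because $\Par$ and $\M\Total$ are mutually inverse $2$-equivalences that preserve local smallness, cocompleteness, and cocontinuity, they induce an equivalence
$$ \cat{rCocomp}\big(\Par(\P_{\M}(\cat{C})),\,\Par(\cat{D})\big)\;\simeq\;\M\cat{Cocomp}\big(\P_{\M}(\cat{C}),\,\cat{D}\big). $$
By the immediately preceding Theorem (free cocompletion of locally small $\M$-categories), $(-)\circ\yon$ is an equivalence $\M\cat{Cocomp}(\P_{\M}(\cat{C}),\cat{D})\to\M\CAT(\cat{C},\cat{D})$, and transporting along the $2$-equivalence yields that
$$ (-)\circ\Par(\yon)\colon \cat{rCocomp}\big(\Par(\P_{\M}(\cat{C})),\E\big)\longrightarrow \cat{rCAT}\big(\Par(\cat{C}),\E\big) $$
is an equivalence. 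Finally I would precompose with $\Phi_{\msf{K}_r(\cat{X})}\circ J\colon\cat{X}\to\Par(\cat{C})$: the functor $\Phi_{\msf{K}_r(\cat{X})}$ is an isomorphism (the unit of the $\Par$--$\M\Total$ $2$-equivalence at the split restriction category $\msf{K}_r(\cat{X})$), and precomposition with the unit $J\colon\cat{X}\to\msf{K}_r(\cat{X})$ of the biadjunction $i\dashv\msf{K}_r$ gives an equivalence $\cat{rCAT}(\msf{K}_r(\cat{X}),\E)\to\cat{rCAT}(\cat{X},\E)$, as $\E$ is split. The composite of these three equivalences is exactly $(-)\circ\eqref{CL}$ (in its $\P_{\M}$-form), which is therefore an equivalence.

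The main obstacle is not the formal chaining above but the bookkeeping that the $2$-functors $\Par$ and $\M\Total$ really do restrict to the locally small, cocomplete setting in both directions: that $\M\Total$ sends a locally small (cocomplete) restriction category to a locally small (cocomplete) $\M$-category — the $\M$-well-poweredness being the only nontrivial point, handled via the bijection between restriction monics and restriction idempotents — and that $\Par$ sends a locally small, cocomplete $\M$-category to a locally small, cocomplete restriction category, which follows from the Remark after the definition of locally small $\M$-category together with the definition of cocomplete restriction category. I would verify these two stability statements explicitly before invoking the transported equivalence; everything else is a purely formal consequence of the locally small free cocompletion theorem for $\M$-categories and of the properties of $i\dashv\msf{K}_r$, precisely as in the small case.
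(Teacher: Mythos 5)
Your proof is correct and follows essentially the same route the paper intends (the corollary is left without an explicit proof, the argument being precisely the adaptation of the small case): reduce along the $\Par$--$\M\Total$ $2$-equivalence to the free cocompletion theorem for locally small $\M$-categories, then precompose with $\Phi_{\msf{K}_r(\cat{X})}\circ J$. Your additional verifications that $\M\Total(\msf{K}_r(\cat{X}))$ and $\M\Total(\E)$ are locally small, cocomplete $\M$-categories are exactly the bookkeeping the paper takes for granted, and your argument for $\M$-well-poweredness via the bijection between restriction monics and restriction idempotents in a split restriction category is sound.
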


Given that a small presheaf on an ordinary category is one that can be written as a colimit of small representables, it is
natural to ask whether there is a similar notion of small restriction presheaf. So let $\cat{X}$ be a locally small restriction category,
and denoting the $\M$-category $\M\Total(\msf{K}_r(\cat{X}))$ by $\cat{C}$, the previous corollary says that 
$\Par(\P_{\M}(\cat{C}))$ is the free cocompletion of $\cat{X}$. Since $\P(\cat{C})$ is a full replete subcategory of 
$\PSh(\cat{C})$, and $\Par(\P_{\M}(\cat{C})) \simeq \PSh_r(\cat{X})$, there exists a full subcategory of $\PSh_r(\cat{X})$ 
which is equivalent to $\Par(\P_{\M}(\cat{C}))$:
	\begin{center}\begin{tikzcd}
		\P_r(\cat{X}) \arrow[hook]{d} \arrow{r}{\simeq} & \Par(\P_{\M}(\cat{C})) \arrow[hook]{d} \\
		\PSh_r(\cat{X}) \arrow[swap]{r}{\simeq} & \Par(\PSh_{\M}(\cat{C}))
	\end{tikzcd}\end{center}
where the above square is a pullback and the bottom map is the equivalence from Theorem \ref{CLembed}.

To see what objects should be in $\P_r(\cat{X})$, it is enough to apply $\Total$ to the above diagram, giving the following 
pullback:
	\begin{center}\begin{tikzcd}
		\Total(\P_r(\cat{X})) \arrow[hook]{d} \arrow{r} & \P(\Total(\msf{K}_r(\cat{X}))) \arrow[hook]{d} \\
		\Total(\PSh_r(\cat{X})) \arrow[swap]{r}{G} & \PSh(\Total(\msf{K}_r(\cat{X})))
	\end{tikzcd}\end{center}
where $G$ is an equivalence. Since the above diagram is a pullback, an object $P$ will be in $\Total(\P_r(\cat{X}))$
(and hence in $\P_r(\cat{X})$) if $GP$ is an object in $\P(\Total(\msf{K}_r(\cat{X})))$; that is, 
$GP \cong \colim \yon C_I$, where $C\colon \cat{I}\to\Total(\msf{K}_r(\cat{X}))$ is a functor with $\cat{I}$ small.
If we define $H$ to be a pseudo-inverse for $G$, then an object will be in $\P_r(\cat{X})$ if it is of the form
$P \cong \colim H\yon C_I $, for some small $\cat{I}$ and functor $C\colon \cat{I} \to \Total(\msf{K}_r(\cat{X}))$. 
We call these $P$ the \emph{small restriction presheaves}.

We also give an explicit description of a small restriction presheaf as follows. Since $GP$ is an object in 
$\P(\Total(\msf{K}_r(\cat{X})))$, it will be the colimit of a small diagram whose vertices are of the form $\yon(A,e)$, where 
$(A,e)$ is an object in $\msf{K}_r(\cat{X})$. Now given $(A,e)\in\msf{K}_r(\cat{X})$, note the following splitting
in $\PSh_r(\cat{X})$:
	\begin{center}\begin{tikzcd}[column sep=small]
		& Q(A,e) \arrow[tail]{dr} & \\
		\yon_r A \arrow[swap]{rr}{\yon_r e} \arrow[two heads]{ur} & & \yon_r A	
	\end{tikzcd}\end{center}
This gives a functor $Q\colon\msf{K}_r(\cat{X})\to\PSh_r(\cat{X})$. Then a restriction presheaf is called \emph{small} 
if it is the colimit of some functor $D\colon\cat{I}\to\PSh_r(\cat{X})$ ($\cat{I}$ small), where each $DI$ is of the form
$Q(A,e)$ for some $(A,e)\in\msf{K}_r(\cat{X})$, and each $D(f\colon I \to J)$ is total. We denote by $\P_r(\cat{X})$ 
the restriction category whose objects are small restriction presheaves on $\cat{X}$. By construction, it is also the 
free cocompletion of $\cat{X}$. It is not difficult to check that when $\cat{X}$ is a small restriction category, 
restriction presheaves on $\cat{X}$ are small, and so $\P_r(\cat{X}) = \PSh_r(\cat{X})$.

\bibliographystyle{plain}
\bibliography{References}

\begin{thebibliography}{10}

\bibitem{MR913967}
A.~Carboni.
\newblock Bicategories of partial maps.
\newblock {\em Cahiers Topologie G\'eom. Diff\'erentielle Cat\'eg.},
  28(2):111--126, 1987.

\bibitem{MR3192082}
J.~R.~B. Cockett and G.~S.~H. Cruttwell.
\newblock Differential structure, tangent structure, and {SDG}.
\newblock {\em Appl. Categ. Structures}, 22(2):331--417, 2014.

\bibitem{MR1871071}
J.~R.~B. Cockett and S.~Lack.
\newblock Restriction categories. {I}. {C}ategories of partial maps.
\newblock {\em Theoret. Comput. Sci.}, 270(1-2):223--259, 2002.

\bibitem{MR1963657}
J.~R.~B. Cockett and S.~Lack.
\newblock Restriction categories. {II}. {P}artial map classification.
\newblock {\em Theoret. Comput. Sci.}, 294(1-2):61--102, 2003.
\newblock Category theory and computer science.

\bibitem{MR2347616}
R.~Cockett and S.~Lack.
\newblock Restriction categories. {III}. {C}olimits, partial limits and
  extensivity.
\newblock {\em Math. Structures Comput. Sci.}, 17(4):775--817, 2007.

\bibitem{MR2324597}
B.~J. Day and S.~Lack.
\newblock Limits of small functors.
\newblock {\em J. Pure Appl. Algebra}, 210(3):651--663, 2007.

\bibitem{MR902979}
R.~A. Di~Paola and A.~Heller.
\newblock Dominical categories: recursion theory without elements.
\newblock {\em J. Symbolic Logic}, 52(3):594--635, 1987.

\bibitem{MR1108477}
M.~Grandis.
\newblock Cohesive categories and manifolds.
\newblock {\em Ann. Mat. Pura Appl. (4)}, 157:199--244, 1990.

\bibitem{MR1953060}
P.~T. Johnstone.
\newblock {\em Sketches of an elephant: a topos theory compendium. {V}ol. 1},
  volume~43 of {\em Oxford Logic Guides}.
\newblock The Clarendon Press, Oxford University Press, New York, 2002.

\bibitem{MR1300636}
S.~Mac~Lane and I.~Moerdijk.
\newblock {\em Sheaves in geometry and logic}.
\newblock Universitext. Springer-Verlag, New York, 1994.
\newblock A first introduction to topos theory, Corrected reprint of the 1992
  edition.

\bibitem{MR968102}
E.~Robinson and G.~Rosolini.
\newblock Categories of partial maps.
\newblock {\em Inform. and Comput.}, 79(2):95--130, 1988.

\end{thebibliography}

\end{document}